\DeclareMathOperator{\Hom}{Hom}
\DeclareMathOperator{\Spec}{Spec}
\DeclareMathOperator{\Proj}{Proj}
\newcommand{\CC}{\mathbb{C}}
\newcommand{\FF}{\mathbb{F}}
\newcommand{\GG}{\mathbb{G}}
\newcommand{\NN}{\mathbb{N}}
\newcommand{\PP}{\mathbb{P}}
\newcommand{\RR}{\mathbb{R}}
\newcommand{\VV}{\mathbb{V}}
\newcommand{\ZZ}{\mathbb{Z}}
\newcommand{\mcB}{\mathcal{B}}
\newcommand{\mcE}{\mathcal{E}}
\newcommand{\mcF}{\mathcal{F}}
\newcommand{\mcG}{\mathcal{G}}
\newcommand{\mcI}{\mathcal{I}}
\newcommand{\mcL}{\mathcal{L}}
\newcommand{\mcN}{\mathcal{N}}
\newcommand{\mcO}{\mathcal{O}}
\newcommand{\mcU}{\mathcal{U}}
\newcommand{\mfm}{\mathfrak{m}}
\def\multiset#1#2{\ensuremath{\left(\kern-.3em\left(\genfrac{}{}{0pt}{}{#1}{#2}\right)\kern-.3em\right)}}
\theoremstyle{plain}
\newtheorem{thm}{Theorem}
\newtheorem{cor}[thm]{Corollary}
\newtheorem{prop}[thm]{Proposition}
\newtheorem{lemma}[thm]{Lemma}
\theoremstyle{definition}
\newtheorem{defn}[thm]{Definition}
\theoremstyle{remark}
\newtheorem{rem}[thm]{Remark}
\newtheorem*{ex}{Example}
\numberwithin{equation}{section}
\numberwithin{thm}{section}
\titleformat{\subsection}[runin]
{\normalfont\normalsize\bfseries}{\thesubsection}{1em}{}
\titleformat{\subsubsection}[runin]
{\normalfont\normalsize\bfseries}{\thesubsubsection}{1em}{}
\title{On the arithmetic Hilbert-Samuel theorem : a proof by deformation}
\author{Dorian Ni}
\begin{document}

\maketitle

\begin{abstract}
We give a new proof the arithmetic Hilbert-Samuel theorem by using classical reductions in the theory of coherent sheaves, a direct proof in the case of the projective space and the conservation of some numerical invariants, called arithmetic Hilbert invariants, through the deformation to the projective completion of the cone. This construction lies at the intersection of deformation theory and Arakelov geometry. It provides a deformation of a Hermitian line bundle over the deformation to the normal cone. 
\end{abstract}


\section{Introduction}

\subsection{Arithmetic Hilbert-Samuel theorem.} The classical version of the arithmetic Hilbert-Samuel theorem describes
the asymptotic behavior of $\overline{\mcL}^{\otimes n}$ for $n \gg 0$, where $\overline{\mcL}$ is a semipositive Hermitian line bundle, by using the associated $L^2$-norms:\\
Let $X$ be a projective scheme over $\Spec \ZZ$ such that $X_\CC$ is smooth, let $|\cdot|$ be a Hermitian metric on a line bundle $\mcL$. Then a Riemannian continuous metric defines a measure $d\mu$ on the complex manifold $X(\CC)$, and we can define
    $$||s||^2_{L^2} =\int_{X(\CC)}|s(x)|_x^2d\mu(x)$$
for $s\in H^0(X,\mcL^{\otimes n})\otimes_\ZZ \CC$.\\
The arithmetic Hilbert-Samuel theorem then states the following:
\begin{thm}
Let $X$ be a flat projective scheme over $\Spec \ZZ$ such that $X_\CC$ is smooth. Let $\overline{\mcL}$ be a semipositive Hermitian line bundle over $X$. Fix a Riemannian continuous metric invariant by complex conjugation on $X(\CC)$, and denote $||\cdot||_{L^2}$ the associated $L^2$-norms on $H^0(X(\CC), \mcL^{\otimes n})$. Let $d$ the dimension of $X$. Then,
$$\frac{d!}{n^d}\widehat{\chi}(H^0(X,\mcL^{\otimes n}), ||\cdot||_{L^2}) \text{ converges towards a finite limit when } n \to \infty.$$
\end{thm}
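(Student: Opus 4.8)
The plan is to give the quantity whose convergence is asserted a name, say $\widehat h(X,\overline{\mcL}) := \lim_n \frac{d!}{n^d}\widehat\chi(H^0(X,\mcL^{\otimes n}),\|\cdot\|_{L^2})$, an \emph{arithmetic Hilbert invariant} attached to the pair, and to prove its existence by three reductions: a cohomological dévissage reducing the general coherent situation to a geometrically simple one, an explicit base-case computation on projective space, and a deformation step that conserves $\widehat h$ and transports the general case to the base case. The expected value of the limit is the top arithmetic self-intersection number of $\overline{\mcL}$.

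\textbf{Formal reduction.} The key algebraic input is the behaviour of $\widehat\chi$ under short exact sequences. For a sequence $0\to\overline M'\to\overline M\to\overline M''\to 0$ of Hermitian lattices endowed with sub- and quotient metrics one has exact additivity $\widehat\chi(\overline M)=\widehat\chi(\overline M')+\widehat\chi(\overline M'')$, while comparing these to other natural metrics introduces a defect controlled by the operator norms of the comparison maps. Using this additivity together with a dévissage of the sheaves $\mcL^{\otimes n}$ along a filtration whose graded pieces are pushforwards of structure sheaves of integral closed subschemes, I would reduce the existence and the value of $\widehat h$ to the case where $X$ is integral and $\mcL$ is very ample, and, after choosing a closed immersion $X\hookrightarrow\PP^N_\ZZ$ compatible with $\mcL$ and $\mcO(1)$, to statements pulled back from projective space. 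The metric bookkeeping is more delicate than in the purely geometric setting: one must track how the $L^2$-norms transform under restriction and pushforward, and the semipositivity of $\overline{\mcL}$ is what keeps the defect terms of order lower than $n^d$.

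\textbf{Base case.} On $\PP^N_\ZZ$ with $\mcO(1)$ and a fixed invariant metric, $H^0(\PP^N,\mcO(n))$ has the degree-$n$ monomials as an explicit basis, orthogonal for the invariant Fubini--Study $L^2$-product. I would compute $\widehat\chi$ directly as minus the logarithm of the covolume, i.e. as a sum of logarithms of monomial $L^2$-norms; these are ratios of Gamma factors, and Stirling's formula yields $\frac{d!}{n^d}\widehat\chi\to \text{(an explicit constant)}$, establishing the theorem and computing $\widehat h$ for $(\PP^N,\overline{\mcO(1)})$.

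\textbf{Deformation and the main obstacle.} The novel step is to interpolate between the general $(X,\overline{\mcL})$ and a model to which the base case applies, using the deformation to the normal cone: blowing up $X\times_\ZZ\AA^1$ along a centre supported over $0$ produces a scheme flat over $\AA^1_\ZZ$ whose generic fibre is $X$ and whose special fibre is the projective completion of a cone, a space dominated by a projective bundle and hence reducible to $\PP^N$. After extending $\overline{\mcL}$ to a Hermitian line bundle over the total space, I would prove that $\widehat h$ is \emph{conserved}, meaning the arithmetic Hilbert invariant of the generic fibre equals that of the special fibre. The hard part will be precisely this conservation, which is analytic rather than algebraic: flatness already conserves the geometric Hilbert polynomial, so the issue is to show that the metric contribution to $\frac{d!}{n^d}\widehat\chi$ varies continuously as $t\to 0$ and specializes correctly. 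This demands uniform control of the $L^2$-norms, and of the vanishing of the $L^2$-sizes of higher cohomology, along the degenerating family, for which the semipositivity of $\overline{\mcL}$ is essential, as it bounds the curvature forms and, via sub-mean-value and Demailly-type estimates, the sup-to-$L^2$ comparison of sections. Combining the continuity of $\widehat h$ along the deformation with its exact value on the cone then yields the existence of the limit in general.
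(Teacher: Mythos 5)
Your three-step skeleton --- d\'evissage, base case on projective space, conservation along a deformation --- matches the paper's, but your base case is too narrow for your deformation step to land on, and this is a genuine gap. Neither your deformation to the normal cone nor the paper's deformation to the projective completion of the cone transports the given Hermitian structure to the Fubini--Study metric: what the deformation produces on the special fibre is an essentially arbitrary seminorm whose analytic tube has merely gained invariance under one more circle action. The paper therefore iterates the deformation, one hyperplane section at a time, keeping track of the embedding into $\PP^N_\ZZ$ (corollary \ref{CombinaisonActionGmU1}), until it reaches $\PP^l_\ZZ$ with $\mcO_\PP(1)$ carrying an arbitrary $U(1)^{l+1}$-invariant seminorm; its base case (proposition \ref{CasDePn}) must and does cover \emph{all} such seminorms. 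There, invariance makes the monomials orthogonal for the Hilbertian seminorms defining the bornology, the sup-norms on neighborhoods of the tube are submultiplicative, hence $-\log||\underline{x}^{\underline{m}}||$ is superadditive, and a lemma on superadditivity of averages (proposition \ref{superadditive}) combined with Fekete's lemma gives convergence, with finiteness supplied separately by Minkowski's first theorem (proposition \ref{minorationlambda} and corollary \ref{finitenesslimit}). Your Gamma-factor/Stirling computation covers exactly one invariant metric; since the value of the limit genuinely depends on the metric (it is $\widehat{c_1}(\overline{\mcL})^d$ in the intersection-theoretic setting), there is no way to reduce the metrics arising from the deformation to the Fubini--Study one, and nothing in your plan replaces the superadditivity argument.

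The conservation step as you propose it is also not viable as stated. You treat conservation as an analytic continuity statement in the deformation parameter, to be proven by uniform $L^2$ estimates, Demailly-type bounds and control of higher cohomology along the degenerating family; but the special fibre is a degenerate limit (in general non-reduced, with the relevant comparison living at the level of bornologies of analytic tubes rather than of any fixed family of $L^2$-norms), and no such continuity argument is what the paper relies on. The conservation actually used (proposition \ref{invarianceArithmeticHilbert}, quoted from \cite{NI2022}) is obtained in the equivariant bornological framework, via the transverse $\GG_m$-action and the isotypic decomposition of the total space of sections, and it holds only for uniformly definite norms on reduced schemes. These hypotheses, which your plan never surfaces, are precisely what force the extra reductions in the paper's proof of theorem \ref{thprincipalbis} (proposition \ref{passageaureduit} together with points 1 and 2 of lemma \ref{Arithmensembledesreductions}). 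Finally, the paper proves its main statement for the bornological arithmetic Hilbert invariants and only recovers the $L^2$ formulation at the very end through the comparison of norms (proposition \ref{uniquenessinvariant}, smooth case with continuous norms); carrying $L^2$-norms through the whole d\'evissage, as you propose, would require re-proving that comparison with uniform defect control at every step, which is exactly the difficulty the bornological formalism is designed to avoid.
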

This theorem has first been proven by Gillet and Soulé in \cite{Gillet1992} by using arithmetic intersection theory in the case of flat projective scheme over $\Spec \ZZ$ with smooth generic fiber and of smooth metrics on line bundles.
Zhang, in \cite{zhang}, then extended this theorem to flat reduced projective scheme over $\Spec \ZZ$ by using a resolution of singularities theorem. Finally, Randriam in \cite{randriam} generalized to continuous metric by using the extended definition of the arithmetic intersection, as defined in \cite{Maillot2000}.\\
Thus, the arithmetic Hilbert-Samuel theorem which first arose as a consequence of the arithmetic Riemann-Roch theorem still holds outside the setting of arithmetic intersection theory. Similarly, even though the classical Hilbert-Samuel theorem can be seen as a consequence of the intersection theory, through the Hirzebruch-Riemann-Roch theorem, see \cite{fulton}, it also holds outside the setting of intersection theory. Furthermore, it was proven through elementary means before the development of intersection theory, in search for solid grounds for intersection theory.\\
In \cite{Abbes1995}, Abbes and Bouche managed to show that the arithmetic Hilbert-Samuel could also be proven without using intersection theory. By using $L^2$-methods, they adapted the elementary "dévissage" technique to give a proof of the theorem in the case where the generic fiber is smooth and the norms as well. In their paper, they also hinted that there exists a $L^\infty$-theoretic version of the arithmetic Hilbert-Samuel theorem.\\
Using the $L^\infty$ setting, \cite{rumelyvauvarley} manages to adapt the monomial basis of the Gröbner theory to prove the classical Hilbert-Samuel theorem. More recently, developing on the theory of Okounkov bodies, Lazarsfeld and Mustata, in \cite{lazarsfeld2008convex}, and Kaveh and Khovanskii, in \cite{kaveh2008convex}, proved various extensions of the classical Hilbert-Samuel theorem. Yuan, in \cite{yuan2009volumes}, and Boucksom and Chen, in \cite{Boucksom_2011}, managed to adapt their techniques to give proofs of the arithmetic Hilbert-Samuel theorem in the reduced case and with continuous norms on line bundles.
The last proofs also used the $L^\infty$ setting, therefore arising in the case of continuous norms on line bundles without any positivity assumption on the metrics.\\

The proof given in this paper will be borrowing properties of the $L^\infty$ and the $L^2$ settings via the theory of analytic tubes and the canonical topological structures on the total spaces of sections, as developed in \cite{Bost-Charles}. In particular, using this canonical structures allowed a conservation of number theorem in \cite{NI2022}, where the conservation of the arithmetic Hilbert invariants along the deformation of the projective completion of the cone is proven.\\

Working in this setting will not only give a geometrical proof of the arithmetic Hilbert-Samuel, but this proof arises naturally in the context of possibly non-reduced projective schemes over $\Spec \ZZ$ and upper semi-continuous semi-norms on line bundles.  

\subsection{Geometry of the tube and arithmetic Grauert tubes.} In \cite{grauert}, Grauert initiated a new paradigm concerning the notion of ampleness of a line. Grauert's ampleness criterion states that the ampleness of a line bundle is characterised by the fact that the total space of $\mcL^{\vee}$ denoted $\VV_X(\mcL)$ is a modification of an affine scheme, see in particular the version given in EGA II \cite[Chapter 2, Sections 8.8 to 8.10]{grothendieck}.\\
In Arakelov geometry, this shift from the properties of the line bundle to the properties of the total space $\VV_X(\mcL)$ also claims interesting results. Indeed, considering the total space of an ample Hermitian line bundle $\overline{\mcL}$ over a scheme $X$ projective over $\Spec \ZZ$ allows to obtain geometric insight on the metric on $\mcL$. This is done by considering the corresponding \emph{analytic tube}.\\ As $\mcL$ is ample, by \cite{grauert}, the analytic space $$\VV_{X(\CC)}(\mcL) = \{(x,\varphi)\;|\; x\in X(\CC), \varphi \in \mcL^\vee_{x}\} $$ is a modification of a Stein space, and the analytic tube associated to the metric on $\mcL$ is then: $$T = \{(x,\varphi)\;|\; |\varphi|_x\leq 1 \}$$ It is a holomorphically convex compact of $\VV_{X(\CC)}(\mcL)$.\\ Considering the analytic tube allows \cite{bostDwork} and \cite{randriam} to derive some lifting results, similar to the Hörmander's $L^2$ estimates  (see for example \cite{Manivel1993}) but without any smoothness assumption on $X$. Furthermore, in the formalism of $A$-schemes developed in \cite{Bost-Charles}, an arithmetic Grauert's ampleness criterion is given. In \cite{Bost-Charles}, the semipositivity of a seminormed line bundle $\overline{\mcL}$ is then characterized by the properties of the topological spaces $H^0(\VV_X(\overline{\mcL}), p_X^*\mcF)$ where $p_X^*\mcF$ is the pullback to $\VV_X(\mcL)$ of a coherent sheaf $\mcF$ on $X$.\\

Building on the theory of the analytic tube, we will work with the canonical topological structure induced by the morphism:
$$H^0(\VV_X(\mcL), p_X^*\mcF) \to H^0(T,p_X^*\mcF) $$
on the total space of sections: 
$$H^0(\VV_X(\mcL), p_X^*\mcF) = \bigoplus_{n\in \NN} H^0(X,\mcL^{\otimes n}\otimes \mcF)$$
Consequently, the natural framework for this article will be possibly non-reduced projective schemes over $\Spec \ZZ$ and upper semicontinuous seminorms on line bundles, without any additional assumptions of smoothness on the space or on the metric.

\subsection{Arithmetic Hilbert invariants.} As suggested in the previous paragraph, one important asset of using the total space of $\mcL$, is that, if $\mcF$ is a coherent sheaf, considering the analytic tube in $\VV_X(\mcL)$ endows the space $H^0(\VV_X(\mcL),p_X^*\mcF)=\bigoplus_{n\in \NN} H^0(X,\mcL^{n}\otimes \mcF)$ with a \emph{canonical} topological structure. Furthermore, under the condition that $\overline{\mcL}$ is a Hermitian line bundle, that the underlying space $X$ is reduced and that $\mcF$ is a Hermitian vector bundle, this topological structure can be understood via the $L^p$-norms, see \cite{Bost-Charles}.\\
In \cite{NI2022}, this topological structure allows to define numerical invariants $\overline{c}_r(X,\overline{\mcL}, \mcF)$ and $\underline{c}_r(X,\overline{\mcL},\mcF)$ in $\RR\cup\{ +\infty \}$, inspired by the arithmetic intersection theory and the arithmetic Hilbert-Samuel theorem, that verify several important properties : 
\begin{itemize}
    \item It satisfies a projection formula for closed immersions.
    \item It is additive with respect to exact sequences of coherent sheaves.
    \item Assuming the results of arithmetic intersection theory: if $\overline{\mcL}$ is an ample Hermitian line bundle on a projective scheme $X$ over $\Spec \ZZ$ of dimension $d$ with smooth generic fiber, then:
        $$\overline{c}_d(X,\overline{\mcL}, \mcO_X)= \underline{c}_d(X,\overline{\mcL},\mcO_X)= \widehat{c_1}(\overline{\mcL})^d $$
\end{itemize}

\subsection{Results.} The introduction of the arithmetic Hilbert invariants allows the following interpretation of the arithmetic Hilbert-Samuel theorem, which will be proven in paragraph \ref{Proooooooof}: 

\begin{thm}\label{thprincipal} Let $X$ be a projective scheme over $\Spec \ZZ$ and $\overline{\mcL}$ a uniformly definite semipositive seminormed line bundle over $X$. Let $\mcF$ be a coherent sheaf over $X$. Let $d$ be the dimension of the support of $\mcF$.  Then, 
$$-\infty < \underline{c}_{d}(X,\overline{\mcL}, \mcF) = \overline{c}_{d}(X,\overline{\mcL},\mcF)<+\infty$$
Furthermore, if $\overline{\mcL}$ is ample, we have 
$$0 < \underline{c}_{d}(X,\overline{\mcL}, \mcF) = \overline{c}_{d}(X,\overline{\mcL},\mcF)<+\infty$$
\end{thm}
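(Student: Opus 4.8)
The plan is to prove Theorem \ref{thprincipal} by the deformation strategy announced in the abstract, combining three ingredients: a reduction to the projective space via the formal properties of the invariants, a direct computation in the projective case, and the conservation of the arithmetic Hilbert invariants through the deformation to the projective completion of the cone. The overall architecture mirrors the classical \emph{dévissage} proof of the Hilbert-Samuel theorem, but carried out at the level of the invariants $\underline{c}_r$ and $\overline{c}_r$ rather than at the level of Euler characteristics.

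First I would set up the reductions. Using the additivity of the invariants with respect to exact sequences of coherent sheaves together with the projection formula for closed immersions (both recalled in the discussion of arithmetic Hilbert invariants above), I would reduce the statement for an arbitrary coherent sheaf $\mcF$ of support-dimension $d$ to the case where $\mcF = \mcO_Z$ for $Z$ an integral closed subscheme of dimension $d$. The key point is that a coherent sheaf admits a filtration whose successive quotients are pushforwards of torsion-free rank-one sheaves on integral subschemes, and the top-degree invariant $c_d$ only sees the $d$-dimensional components, so lower-dimensional contributions vanish in the relevant normalization. After this, the problem is to compute $\underline{c}_d$ and $\overline{c}_d$ for $(Z,\overline{\mcL}|_Z,\mcO_Z)$ and to show they coincide and are finite (and strictly positive when $\overline{\mcL}$ is ample).

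Next I would treat the base case and the deformation. For $X = \PP^n_\ZZ$ with $\overline{\mcL}$ the standard seminormed $\mcO(1)$, the total space of sections is an explicit graded ring of polynomials, the analytic tube is an explicit polydisc-type region, and one can compute the canonical topological structure and hence $\underline{c}_d = \overline{c}_d$ directly, obtaining a finite positive value. For the general integral $Z$, I would invoke the conservation of the arithmetic Hilbert invariants along the deformation to the projective completion of the cone, established in \cite{NI2022}: one deforms $Z$ (together with $\overline{\mcL}$, using the Hermitian line bundle deformation over the deformation to the normal cone constructed in \emph{loc.\ cit.}) to its projective tangent cone, whose $d$-dimensional behaviour is governed by projective space. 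The equality of $\underline{c}_d$ and $\overline{c}_d$ for $Z$ then follows because the invariants are preserved by the deformation and the two invariants already agree on the central fiber by the projective-space computation. Finiteness (the upper bound on $\overline{c}_d$ and lower bound on $\underline{c}_d$) comes from the uniform definiteness and semipositivity of $\overline{\mcL}$, which control the $L^2$/$L^\infty$ growth of sections through the analytic tube; ampleness upgrades the lower bound from $0 \le$ to $0 <$.

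The hard part will be the conservation step: one must verify that the canonical topological structures on the total spaces of sections vary controllably under the deformation, so that the numerical invariants $\underline{c}_d$, $\overline{c}_d$ genuinely pass to the limit on the special fiber rather than merely giving inequalities. This requires the analytic-tube estimates of \cite{Bost-Charles} to be uniform across the family, and it is where the semipositivity and uniform-definiteness hypotheses on $\overline{\mcL}$ are essential. Assuming the conservation result of \cite{NI2022} as a black box, the remaining subtlety is the compatibility of the dévissage filtration with the deformation, namely checking that forming the tangent cone commutes with the reduction to integral subschemes up to lower-dimensional error terms that do not affect $c_d$; I would handle this by arguing degree by degree in the Hilbert filtration and discarding contributions of support-dimension $< d$.
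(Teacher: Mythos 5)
Your overall architecture (dévissage reductions, a base case on projective space, and conservation of the invariants through the deformation to the projective completion of the cone) is the same as the paper's, but two of your three ingredients are set up in a way that would not actually close the argument. First, the deformation does not take an integral subscheme $Z$ to anything governed by projective space in one step: the special fiber is $\Proj \bigoplus_k I_\bullet^k/I_\bullet^{k+1}$, the projective completion of the normal cone, which for a general $Z$ is nothing like $\PP^l$. The paper's proof of theorem \ref{thprincipalbis} instead \emph{iterates} the deformation once per coordinate of an ambient $\PP^N_\ZZ$, keeping track of the closed immersion (proposition \ref{immerpn}) and of the accumulating $\GG_m$- and $U(1)$-actions (corollary \ref{CombinaisonActionGmU1}), until the kernel of $\ZZ[x_0,\dots,x_N]\to \bigoplus_n H^0(X,\mcL^{\otimes n})$ becomes a monomial ideal; only at that point are the integral components linear $\PP^l_\ZZ$'s or vertical $\PP^l_{\FF_p}$'s. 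Moreover the special fibers produced along the way are neither reduced nor integral, so the dévissage of lemma \ref{Arithmensembledesreductions} (together with proposition \ref{passageaureduit}, since the conservation result \ref{invarianceArithmeticHilbert} is stated for reduced schemes and uniformly definite norms) must be interleaved with the deformations, not performed once at the start. Second, and relatedly, your base case is too narrow: the endpoint of the iterated degeneration is $\PP^l_\ZZ$ with an \emph{arbitrary} seminorm on $\mcO_\PP(1)$ invariant under $U(1)^{l+1}$, not the standard one, so an explicit polydisc computation for the standard metric buys nothing. The paper's proposition \ref{CasDePn} handles exactly this class of metrics, by combining the orthogonality of monomials for torus-invariant Hilbertian seminorms with the submultiplicativity of sup-norms on neighborhoods of the tube and the averaged Fekete superadditivity lemma (proposition \ref{superadditive} and corollary \ref{covergenceparsuadditivite}). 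This superadditivity mechanism is the analytic heart of the proof and is absent from your proposal.

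There is also a third gap you do not address: the deformation machinery requires the underlying line bundle to be \emph{very} ample (the hyperplane-section hypothesis), whereas the theorem only assumes $\overline{\mcL}$ semipositive, hence $\mcL$ ample. The paper resolves this by first proving the very ample case (theorem \ref{thprincipalbis}) and then deducing the general statement: one takes $n_0$ with $\mcL^{\otimes n_0}$ very ample, observes that the invariants of $(X,\overline{\mcL}^{\otimes n_0},\mcL^{\otimes i})$ are limits along the arithmetic progressions $n n_0+i$ of the sequence defining the invariants of $(X,\overline{\mcL},\mcO_X)$, and then uses a global section $\mcO_X \to \mcL^{\otimes kn_0+1}$ together with additivity and the vanishing of top-degree invariants on lower-dimensional supports to show all these subsequence limits coincide, forcing $\liminf=\limsup$ for the full sequence. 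Without this step your argument only proves the theorem under the additional hypothesis that $\mcL$ is very ample.
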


This implies classical statements of the Hilbert-Samuel theorem, and, in particular, in the case of $\mcF=\mcO_X$, we recover the fact the semipositivity condition is not needed, as in \cite{rumelyvauvarley}, \cite{yuan2009volumes}, \cite{Boucksom_2011}.

\begin{cor}Let $X$ be a projective scheme over $\Spec \ZZ$ of dimension $d$ and $\overline{\mcL}$ a uniformly definite normed line bundle over $X$ such that the underlying line bundle $\mcL$ is ample over $\Spec \ZZ$, then 
$$-\infty < \underline{c}_{d}(X,\overline{\mcL}, \mcO_X) = \overline{c}_{d}(X,\overline{\mcL},\mcO_X)<+\infty$$
\end{cor}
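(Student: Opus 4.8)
The plan is to deduce the statement from Theorem \ref{thprincipal} applied to the structure sheaf $\mcF = \mcO_X$, whose support is all of $X$ and therefore has dimension $d$, by reducing an arbitrary uniformly definite norm on the ample line bundle $\mcL$ to the semipositive situation covered by that theorem. Since $\mcL$ is ample over $\Spec \ZZ$, it admits a smooth positive, hence semipositive, Hermitian metric $|\cdot|_0$; denote by $\overline{\mcL}_0$ the resulting semipositive Hermitian line bundle. Because $X(\CC)$ is compact and both $|\cdot|$ and $|\cdot|_0$ are continuous and uniformly definite, they are uniformly comparable: there is a constant $C \ge 0$ with $e^{-C}|\cdot|_0 \le |\cdot| \le e^{C}|\cdot|_0$ on $X(\CC)$. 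Rescaling a metric by a positive constant does not change its curvature, so the bundles $\overline{\mcL}_{\pm} := e^{\pm C}\,\overline{\mcL}_0$ are again semipositive.

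First I would obtain finiteness. The invariants $\underline{c}_{d}$ and $\overline{c}_{d}$ are monotone with respect to the seminorm, so the comparison $e^{-C}|\cdot|_0 \le |\cdot| \le e^{C}|\cdot|_0$ brackets $\underline{c}_{d}(X,\overline{\mcL},\mcO_X)$ and $\overline{c}_{d}(X,\overline{\mcL},\mcO_X)$ between the corresponding invariants of $\overline{\mcL}_{+}$ and $\overline{\mcL}_{-}$. Applying Theorem \ref{thprincipal} to the semipositive bundles $\overline{\mcL}_{\pm}$ shows that these bracketing values are finite (and, since $\mcL$ is ample, positive), whence $-\infty < \underline{c}_{d}(X,\overline{\mcL},\mcO_X) \le \overline{c}_{d}(X,\overline{\mcL},\mcO_X) < +\infty$.

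The main obstacle is the equality $\underline{c}_{d} = \overline{c}_{d}$, which the sandwich does not deliver: replacing $\overline{\mcL}_0$ by $e^{\pm C}\overline{\mcL}_0$ shifts the degree-$d$ invariant by a nonzero amount proportional to $C$ and to the degree of the generic fibre $X_\QQ$, so the two semipositive bounds differ by a fixed positive gap that cannot be removed by rescaling. One cannot circumvent this by letting $C \to 0$ either, since a continuous metric whose curvature is somewhere negative cannot be approximated uniformly by semipositive metrics; the sub-mean-value inequality for plurisubharmonic functions forces a definite error. Consequently the equality has to come from a genuine convergence result rather than from a soft comparison. I would derive it from the canonical topological structure carried by the finitely generated graded ring $\bigoplus_{n \in \NN} H^0(X, \mcL^{\otimes n})$, finite generation being guaranteed by the ampleness of $\mcL$: the invariants attached to its graded pieces satisfy a Fekete-type superadditivity, so that the associated concave transform (arithmetic Okounkov body) yields an honest limit for \emph{any} uniformly definite continuous norm. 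This is precisely the step that recovers the semipositivity-free statements of \cite{yuan2009volumes} and \cite{Boucksom_2011}, and it is where the real work lies; the preceding reduction and the finiteness are formal consequences of Theorem \ref{thprincipal} together with the monotonicity and additivity properties of the invariants.
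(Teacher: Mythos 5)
Your bracketing argument for finiteness is essentially sound, modulo two small inaccuracies: the given norm $|\cdot|$ is only upper semicontinuous (not continuous), and $X_\CC$ need not be smooth, so ``smooth positive metric'' should be replaced by a pullback of a Fubini--Study metric via an embedding by sections of a power of $\mcL$. Both are repairable: uniform definiteness gives a continuous lower bound, upper semicontinuity on the compact $X(\CC)$ gives the upper bound, and the monotonicity property of Proposition \ref{uniquenessinvariant} together with Theorem \ref{thprincipal} applied to the rescaled semipositive metrics then yields $-\infty < \underline{c}_{d} \le \overline{c}_{d} < +\infty$ exactly as you say.

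The genuine gap is the step you yourself flag as ``where the real work lies'': the equality $\underline{c}_{d} = \overline{c}_{d}$. You propose to obtain it by developing a Fekete/Okounkov-body convergence theory valid for an arbitrary uniformly definite norm, i.e.\ essentially to re-prove the results of Yuan and Boucksom--Chen from scratch, and you do not carry this out. Nothing in the paper's toolkit supports that route for general $X$: the Fekete argument of Proposition \ref{CasDePn} is specific to $\PP^l_\ZZ$ with a tube invariant under $U(1)^{l+1}$, where the monomials form orthogonal bases, and it does not transfer to an arbitrary graded ring $\bigoplus_n H^0(X,\mcL^{\otimes n})$. The idea you are missing is Proposition \ref{passageAlenveloppeholo}: the arithmetic Hilbert invariants depend only on the bornology of $H^0(\VV_X(\mcL), T, \mcO_\VV)$, and for $\mcL$ ample this bornology is unchanged when the tube $T$ is replaced by its holomorphically convex hull $\widehat{T}$, i.e.\ when $|\cdot|$ is replaced by its associated equilibrium seminorm. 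That seminorm is semipositive by construction (its tube is holomorphically convex), and it is still uniformly definite since $\widehat{T} \supseteq T$ contains a neighborhood of the zero section. So one may simply assume $\overline{\mcL}$ is a uniformly definite semipositive normed line bundle and quote Theorem \ref{thprincipal}; this single substitution replaces the entire convergence theory you defer to, and it is precisely why the paper remarks that Proposition \ref{passageAlenveloppeholo} explains how the semipositivity hypothesis can be dropped when $\mcF = \mcO_X$.
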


We will furthermore prove a stronger result in paragraph \ref{CaseofPnO1}, in the case of $\PP^l_\ZZ$ and $\mcO_\PP(1)$, without the uniformly definite condition that ensures finiteness.

\begin{thm}\label{CasSupplementairedePn} Let $X = \PP^l_\ZZ$. Consider a seminormed line bundle structure on $\mcO_\PP(1)$. \\
Then, 
$$\underline{c}_{l+1}(\PP^l_\ZZ,\overline{\mcO_\PP(1)}, \mcO_\PP) = \overline{c}_{l+1}(\PP^l_\ZZ,\overline{\mcO_\PP(1)},\mcO_\PP)\in \RR\cup \{+\infty\}$$
\end{thm}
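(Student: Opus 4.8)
The plan is to reduce everything to the explicit graded ring attached to $X=\PP^l_\ZZ$ and $\mcL=\mcO_\PP(1)$. Here the total space of sections is
\[
\bigoplus_{n\in\NN} H^0(\PP^l_\ZZ,\mcO_\PP(n)) \;=\; \ZZ[x_0,\dots,x_l],
\]
and the canonical topological structure --- the one induced by restriction to the analytic tube $T\subset\VV_{\PP^l_\CC}(\mcO_\PP(1))$ --- is computed by the family of sup-seminorms $\|\cdot\|_n$ deduced from the chosen seminorm on $\mcO_\PP(1)$. Since the tube is invariant under the fibrewise circle action $\varphi\mapsto e^{i\theta}\varphi$, the grading by $n$ is exactly the decomposition into $S^1$-weight spaces, and $\|\cdot\|_n$ is the sup-seminorm on $H^0(\PP^l_\ZZ,\mcO_\PP(n))$ attached to the $n$-th power of the pointwise seminorm on $\mcO_\PP(1)$. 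Unwinding the definitions of \cite{NI2022}, the statement to prove is that $\tfrac{(l+1)!}{n^{l+1}}\,\widehat{\chi}\big(H^0(\PP^l_\ZZ,\mcO_\PP(n)),\|\cdot\|_n\big)$ converges in $\RR\cup\{+\infty\}$, i.e. that its $\liminf$ and $\limsup$, namely $\underline{c}_{l+1}$ and $\overline{c}_{l+1}$, agree.

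The structural engine I would use is submultiplicativity. Because the ring structure on $\ZZ[x_0,\dots,x_l]$ is multiplication of homogeneous functions on $\VV_{\PP^l_\CC}(\mcO_\PP(1))$ and the tube is stable under multiplication, one has $\|s\cdot t\|_{n+m}\le\|s\|_n\|t\|_m$, and the surjections $\mathrm{Sym}^n\otimes\mathrm{Sym}^m\to\mathrm{Sym}^{n+m}$ are seminorm-nonincreasing. Translating this through the behaviour of $\widehat{\chi}$ under surjective seminorm-decreasing morphisms and its additivity in short exact sequences, I would derive a superadditivity property for the arithmetic Euler characteristics, controlling $\widehat{\chi}(E_{n+m})$ from below in terms of $\widehat{\chi}(E_n)$ and $\widehat{\chi}(E_m)$ up to the rank discrepancy $\binom{n+m+l}{l}$. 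A Fekete-type convergence, adapted to the polynomial normalization $n^{l+1}/(l+1)!$ in the spirit of the convergence theorems for filtered graded algebras, then forces $\underline{c}_{l+1}=\overline{c}_{l+1}$, the common value being the supremum of the normalized sequence and hence possibly $+\infty$.

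To exhibit the value concretely and to locate the source of the infinite case, I would run the explicit monomial computation in the circled situation. For an $S^1$-invariant tube the monomials $x^\alpha$ are orthogonal for the associated $L^2$-structure, so the covolume of $H^0(\PP^l_\ZZ,\mcO_\PP(n))$ factors and $\widehat{\chi}(E_n)=-\sum_{|\alpha|=n}\log\|x^\alpha\|_n$ up to a bounded lattice correction. Writing the seminorm as $e^{-g}$ times a fixed Fubini--Study norm, with $g$ upper semicontinuous on $\PP^l(\CC)$ and possibly equal to $-\infty$ on the degeneration locus, the normalized sums $\tfrac{(l+1)!}{n^{l+1}}\sum_{|\alpha|=n}\log\|x^\alpha\|_n$ converge to an integral of $g$ against the equidistribution measure of the exponents $\alpha/n$ on the standard simplex. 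Since $g$ is bounded above but possibly unbounded below, this integral lies in $[-\infty,+\infty)$; this is exactly where the value $+\infty$ appears when $\overline{\mcO_\PP(1)}$ fails to be uniformly definite, and the finiteness in the uniformly definite case is recovered from Theorem \ref{thprincipal} applied to the Fubini--Study reference.

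The main obstacle I anticipate is the degenerate seminorm, where two difficulties meet: the $L^p$-description of the canonical topological structure from \cite{Bost-Charles} is only guaranteed when the seminorm is nondegenerate, and the Fekete convergence must be carried out with a limit value of $+\infty$ and with the rank mismatch between the two factors. I would resolve both by approximating the upper semicontinuous seminorm monotonically from above by continuous, uniformly definite ones: for each approximant the topological structure is computed by genuine covolumes and the limit in $n$ exists by the superadditivity above, while upper semicontinuity lets one interchange the limit in $n$ with the approximation limit from one side. The superadditive Fekete structure then guarantees that the resulting one-sided bounds for $\underline{c}_{l+1}$ and $\overline{c}_{l+1}$ coincide, yielding the claimed common value in $\RR\cup\{+\infty\}$.
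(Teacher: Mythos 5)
There is a genuine gap, and it sits exactly where the paper has to work hardest. Your argument rests on the claim that, because the tube is invariant under the fibrewise circle action $\varphi\mapsto e^{i\theta}\varphi$, the monomials $x^{\alpha}$ are orthogonal for the associated Hilbertian structure. The fibrewise action is automatic for \emph{every} seminormed line bundle (the tube is by definition stable under multiplication by complex numbers of modulus $\leq 1$), and it only yields orthogonality of the weight spaces, i.e.\ of the grading by total degree $n$; it says nothing about the monomials \emph{inside} a fixed $H^0(\PP^l_\ZZ,\mcO_\PP(n))$. Monomial orthogonality requires the tube to be stable under the torus $U(1)^{l+1}$ acting on the homogeneous coordinates $x_0,\dots,x_l$, which is the hypothesis of Proposition \ref{CasDePn} and which a general seminorm on $\mcO_\PP(1)$ simply does not satisfy. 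So both your identification $\widehat{\chi}(E_n)=-\sum_{|\alpha|=n}\log\|x^{\alpha}\|_n$ and the equidistribution computation built on it are unjustified from the start. Your fallback engine does not close this hole either: deriving superadditivity of $n\mapsto\widehat{\chi}(E_n)$ from submultiplicativity through the surjections $\mathrm{Sym}^n\otimes\mathrm{Sym}^m\to\mathrm{Sym}^{n+m}$ would require, via the exact sequence $0\to K\to E_n\otimes E_m\to E_{n+m}\to 0$ and Lemma \ref{additivityclassical}, an \emph{upper} bound on $\widehat{\chi}$ of the kernel $K$ with its induced seminorm; no such bound is available, and in the non-uniformly-definite case (the whole point of this theorem) seminorms of nonzero lattice vectors can be arbitrarily small or zero and $\widehat{\chi}$ can be $+\infty$. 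This is precisely the obstruction that forces the much heavier machinery of Yuan and Boucksom--Chen in the literature; it cannot be passed over with "translating this through the behaviour of $\widehat{\chi}$". Likewise, your monotone approximation from above only produces one-sided inequalities through the monotonicity property of Proposition \ref{uniquenessinvariant}, and nothing in your sketch forces the approximants to capture $\overline{c}_{l+1}$ of the degenerate seminorm from the other side.

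What the paper actually does, and what your proposal is missing, is an algebraic substitute for the deformation to the projective completion of the cone, needed because the deformation-invariance result (Proposition \ref{invarianceArithmeticHilbert}) is only proved for uniformly definite norms. One filters $R=\ZZ[x_0,\dots,x_l]$ by the powers of $I=x_mR$ and replaces $R$ by $\bigoplus_k I^k/I^{k+1}\cong R$ equipped with the \emph{subquotient} seminorms: by the additivity of $\widehat{\chi}$ over filtrations (Lemma \ref{additivityclassical}) the degreewise Euler characteristics, hence the invariants $\underline{c}_{l+1}$ and $\overline{c}_{l+1}$, are unchanged, while the bornology becomes "diagonal" in the variable $x_m$. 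Iterating over all $l+1$ variables produces a bornology on $\ZZ[x_0,\dots,x_l]$ defined simultaneously by Hilbertian seminorms orthogonal on monomials and by sup-type seminorms submultiplicative on monomials; only then does the scalar superadditivity $\alpha_{\underline{m}}=-\log\|x^{\underline{m}}\|$ on $\NN^{l+1}$ hold, and the refined Fekete lemma (Proposition \ref{superadditive} and Corollary \ref{covergenceparsuadditivite}) yields the common limit in $\RR\cup\{+\infty\}$. Your proposal contains the Fekete averaging and the monomial computation, which are indeed the paper's endgame, but without the associated-graded reduction the argument does not apply to an arbitrary seminormed structure on $\mcO_\PP(1)$.
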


\subsection{Strategy of the proof.} Firstly, the compatibility of arithmetic Hilbert invariants with exact sequences of coherent sheaves will allow us to use classical reductions in the theory of coherent sheaves. Then, we will use the deformation to the projective completion of the cone introduced in \cite{NI2022}, to reduce ourselves to the standard case of $\PP^l_\ZZ$ and $\mcO_\PP(1)$ endowed with a metric stable by the action of $U(1)^{l+1}$, where $U(1)$ is the group of the complex numbers of module $1$ and where $U(1)^{l+1}$ acts by multiplication on each variable of $\PP^l_\ZZ$. Then, building on Fekete superaddivity lemma and using the algebra structure of the topological space $\bigoplus_n H^0(\PP^l_\ZZ, \mcO(n))$, we will give a direct proof of the standard case.

\subsection{Outline of the paper.}
The formalism of the geometry of the analytic tube associated to a seminormed line bundle will be detailed in section 2. In particular, we will describe the extension of the notion of semipositivity to seminormed line bundles. In section 3, we will introduce the natural bornological structure on the total space of sections and recall the fact that the bornology is an invariant of the equilibrium seminorm associated to the given seminorm. \\
Finally, in section 4, we will give a proof of the arithmetic Hilbert-Samuel theorem which relies only on the deformation to the projective completion of the cone, on classical reductions and on the case of $\PP^l_\ZZ$ and the line bundle $\mcO_\PP(1)$.\\
In appendix, in section 5 and 6, we extracted from \cite{NI2022} the facts needed for our proof. More precisely, in section 5, we will give the definition of the arithmetic Hilbert invariants and recall some properties proven in \cite{NI2022}. In section 6, we will recall the main properties of the deformation to the projective completion of the cone introduced in \cite{NI2022}, including a description of the natural transverse action of $\GG_m$ and the invariance by deformation theorem for arithmetic Hilbert invariants.\\

\subsection{Acknowledgements.}
I would like to thanks François Charles for sharing its views on Arakelov geometry with me, for the numerous discussions and for allowing me to dive into his joint work with Jean-Benoît Bost to apprehend the subject. This paper owes him a great intellectual debt.\\
This project has received funding from the European Research Council (ERC) under the European Union’s Horizon 2020 research and innovation programme (grant agreement No 715747).

\section{Seminormed vector bundles and the associated analytic tube}

We first give a few definitions in the usual setting of Arakelov geometry and then give their counterpart in the framework of \cite{Bost-Charles}.
\subsection{Seminormed vector bundles\\}

\begin{defn} Let $X$ be a projective scheme over $\Spec \ZZ$. 
A \emph{seminorm} $|\cdot|$ on a locally free sheaf of finite rank $\mcF$ on $X$, is, for each reduced point $x$ of $ X_{\CC}$, a seminorm $|\cdot|_x$ on the complex vector space $\mcF_{|x} = \mcF_x/\mfm_x\mcF_x$.\\
Furthermore, the seminorm $|\cdot|$ is said to be \emph{upper semicontinuous} (resp. \emph{continuous}) if for every open set $\mcU$ of the analytic space $X_{\CC}$, and every $s\in H^0(\mcU,\mcF)$,  the map 
$$\begin{array}{ccccc}
|s| & : & \mcU & \to & \RR^{+} \\
 & & x & \mapsto & |s(x)|_x \\
\end{array}$$
is upper semicontinuous (resp. continuous).\\
A \emph{seminormed vector bundle} $\overline{\mcF} = (\mcF,|\cdot|)$ on $X$, is a locally free sheaf of finite rank $\mcF$ endowed with an upper semicontinuous seminorm invariant by complex conjugation.\\
\end{defn}

\begin{rem} For more details on complex conjugation, see \cite[Remark 2.2]{NI2022}.
\end{rem}

\begin{defn} Let $X$ be a projective scheme over $\Spec \ZZ$. A seminorm $|\cdot|$ on a seminormed vector bundle $\overline{\mcF}$ is said to be \emph{definite} if all the seminorms $|\cdot|_x$ are definite. $\overline{\mcL}$ is then called a \emph{normed vector bundle} and $|\cdot|$ a \emph{norm on $\mcL$}.\\
In the special case where $|\cdot|$ is definite continuous and Hermitian,  $\overline{\mcF}$ is usually called a \emph{Hermitian vector bundle} and $|\cdot|$ a continuous Hermitian metric.\\
\end{defn}

In the case of an upper semicontinuous seminorm on a vector bundle $\mcF$, we will need a more restrictive notion than the notion of definite seminorms:\\

\begin{defn}\label{uniformlydef}
Let $X$ be a projective scheme over $\Spec \ZZ$. A norm $|\cdot|$ on a normed vector bundle $\overline{\mcF}$ is said to be \emph{uniformly definite} if there exists a continuous norm $|\cdot|'$ on $\mcF$, such that $|\cdot|\geq |\cdot|'$.\\
\end{defn}

The analog of the notion of ample line bundle is given in \cite{zhang} and we also use the associated notion of semipositivity:

\begin{defn}\label{amplehermitian} Let $X$ be a projective scheme over $\Spec \ZZ$. A Hermitian line bundle $\overline{\mcL}$ is \emph{semipositive} if :
\begin{itemize}
    \item[i)] $\mcL$ is ample.
    \item[ii)] for every open set $\mcU$ of the analytic space $X_{\CC}$, and every non-vanishing section $s\in H^0(\mcU,\mcL)$,  the map 
$$\begin{array}{ccccc}
-\log|s|^2 & : & \mcU & \to & \RR  \\
 & & x & \mapsto & -\log|s(x)|^2_x \\
\end{array}$$ is plurisubharmonic.
\end{itemize}
Furthermore, a Hermitian line bundle $\overline{\mcL}$ over $X$ is \emph{ample} if it is semipositive and for large enough $n$, there is a basis of $H^0(X, \mcL^{\otimes n})$, consisting of strictly effective sections, that is sections $s\in H^0(X, \mcL^{\otimes n})$ such that $|s|_x < 1$ for all $x \in X(\CC)$.
\end{defn}

This definition will be extended in definition \ref{definitionAmpleLineBundleHolomorphicallyConvex} to seminormed line bundle.\\

\begin{ex}
In the case of the projective space $\PP^l_\ZZ$, the line bundle $\mcO_\PP(1)$ can be endowed with the Fubini-Study metric $|\cdot|_{FS}$ :\\
Let $[x_0:\dots : x_l] \in \PP^l_\CC$, then 
$\mcO_\PP(1)_{[x_0:\dots : x_l]}$ is isomorphic to $\Hom_\CC(\CC(x_0,\dots,x_l),\CC)$. 
If $P\in \Hom_\CC(\CC(x_0,\dots,x_l),\CC)$, we define $${|P|_{FS}}_{[x_0:\dots : x_l]} :=  \frac{|P(x_0,\dots,x_l)|}{(|x_0|^2+\dots + |x_l|^2)^{1/2}}$$
This defines a semipositive Hermitian line bundle $\overline{\mcO_\PP(1)} = (\mcO_\PP(1),|\cdot|_{FS} )$.\\
Furthermore, for all $\epsilon > 0$, $(\mcO_\PP(1),e^{-\epsilon}|\cdot|_{FS} )$ is an ample Hermitian line bundle.\\
\end{ex}

The following definition of pullback of seminormed vector bundle give further examples:\\

\begin{defn}
Let $f: X \to Y$ be a morphism between projective schemes over $\Spec \ZZ$. Then the \emph{pullback} $f^*\overline{\mcF}$ of a seminormed vector bundle $\overline{\mcF}= (\mcF, |\cdot|)$ over $Y$ is the seminormed vector bundle of underlying sheaf $f^*\mcF$ and such that the seminorm $|\cdot|'$ over $f^*\mcF$ verifies that the pullback map $\mcF_{|f(x)} \to (f^*\mcF)_{|x} $ is an isometry. 
\end{defn}

\begin{ex}
Let $X$ be a projective scheme over $\Spec \ZZ$, $\mcL$ be a line bundle over $X$ ample over $\Spec \ZZ$.
Now, let $\pi : X \to \PP^l_\ZZ$ be a closed immersion induced by global sections of $\mcL^{\otimes n}$, then $\pi^*(\mcO_\PP(1),|\cdot|_{FS})$ endows $\mcL^{\otimes n}$ with a structure of semipositive Hermitian line bundle. Taking the $n$-root of the seminorm, we get a structure of semipositive Hermitian line bundle on $\mcL$.\\
Furthermore, any semipositive Hermitian metric on an ample line bundle can be approximated uniformly by roots of pullbacks of Fubini-Study metrics, see for example \cite{tian}, \cite{BoucheB1}, \cite{berman2007bergman}.\\

\end{ex}

\subsection{The analytic tube\\[2mm]}\label{analytictube}
Grauert proved that the ampleness of a line bundle $\mcL$ over $X$ a projective scheme over $\Spec \ZZ$ can be characterized by the fact that $\VV_X(\mcL)$ is a modification over $\Spec \ZZ$ of an affine scheme, where $\VV_X(\mcL)$ is the affine scheme over $X$ defined by the sheaf of algebra $\bigoplus_{n\in \NN} \mcL^{\otimes n}$, see EGA II \cite[Chapter 2, Sections 8.8 to 8.10]{grothendieck}.\\
In a similar manner, \cite{Bost-Charles} gives a definition of the \emph{analytic tube} associated to a seminormed line bundle $\overline{\mcL}$, which allows an corresponding definition of the semipositivity of a seminormed line bundle $\overline{\mcL}$ as a property of the analytic tube.\\

More precisely, the complex points of $\VV_X(\mcL)$ can be written as $$\VV_X(\mcL)(\CC) = \{(x,\varphi)\;|\; x\in X(\CC), \varphi \in \mcL^\vee_{x}\}$$
as points of the total space of the dual line bundle $\mcL^\vee$.\\
If $|\cdot|$ is an upper semicontinuous seminorm on $\mcL$, then the dual pseudo-norm, also denoted $|\cdot|$, is lower semicontinuous and definite, thus $$T = \{(x,\varphi)\;|\; x\in X(\CC), \varphi \in \mcL^\vee_{|x}, |\varphi|\leq 1\}$$ is compact.
\begin{defn}
With the above notation, $T$ is called the \emph{analytic tube associated to $\overline{\mcL}$}.
\end{defn}

\begin{prop}
The datum of $(\VV_X(\mcL),T)$ where $T$ is a compact subset of $\VV_X(\mcL)(\CC)$ containing the zero section $0_X \subset \VV_X(\mcL)(\CC)$ stable by multiplication by any complex number $\lambda$ such that $|\lambda| \leq 1$ and invariant by complex conjugation is equivalent to the datum of a seminormed line bundle.
\end{prop}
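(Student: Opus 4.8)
The plan is to establish the claimed equivalence by constructing maps in both directions and checking they are mutually inverse. The content is essentially a dictionary between the analytic/metric data of a seminorm on $\mcL$ and the geometric data of a compact tube in the total space of $\mcL^\vee$, so I would organize the proof as: (i) from a seminormed line bundle produce such a tube, (ii) from such a tube recover a seminorm, and (iii) verify the two constructions are inverse to each other and that the stated properties of $T$ (containing $0_X$, stability under scalar multiplication by $|\lambda|\leq 1$, invariance by complex conjugation) are exactly the constraints forced by, and forcing, the seminorm axioms.

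First I would treat the forward direction, which is already sketched in the excerpt: given $\overline{\mcL}=(\mcL,|\cdot|)$, dualize the upper semicontinuous seminorm to the lower semicontinuous definite pseudo-norm on $\mcL^\vee$ and set $T=\{(x,\varphi)\mid |\varphi|\leq 1\}$. I would verify that $T$ contains $0_X$ (since $|0|=0\leq 1$), is stable under multiplication by $\lambda$ with $|\lambda|\leq 1$ (from homogeneity $|\lambda\varphi|=|\lambda|\,|\varphi|$), is invariant under complex conjugation (inherited from the conjugation-invariance of $|\cdot|$), and is compact (using lower semicontinuity and definiteness of the dual, as noted just before the statement). For the reverse direction, given the pair $(\VV_X(\mcL),T)$ with these properties, I would define, for each reduced point $x\in X(\CC)$ and each $\varphi\in\mcL^\vee_{|x}$, the gauge (Minkowski functional) $|\varphi|_x:=\inf\{t>0\mid t^{-1}\varphi\in T\cap\mcL^\vee_{|x}\}$, then take $|\cdot|_x$ on $\mcL_{|x}$ to be the dual seminorm. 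The fibrewise slice $T_x=T\cap\mcL^\vee_{|x}$ is a compact, balanced (from the $|\lambda|\leq 1$ stability), $0$-containing subset of the one-complex-dimensional space $\mcL^\vee_{|x}$, hence is a closed disk of some radius in $[0,\infty)$, and its gauge is exactly a (possibly degenerate, if the radius is $+\infty$) Hermitian-type seminorm; dualizing back gives the seminorm on $\mcL$.

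The two remaining points are that these assignments are mutually inverse and that the regularity (upper semicontinuity, conjugation-invariance) matches up. Mutual inversion is a pointwise statement: the gauge of the unit ball of a seminorm recovers the seminorm, and the unit ball of the gauge of a balanced closed set recovers the set, which I would check on each fibre using that each $T_x$ is a disk. For the regularity, I would argue that compactness of the total $T$ (equivalently, lower semicontinuity of the dual pseudo-norm) translates precisely into upper semicontinuity of $|\cdot|$ on $\mcL$, and that conjugation-invariance of $T$ corresponds to conjugation-invariance of the seminorm; this is where the requirement that $T$ be \emph{compact} rather than merely fibrewise balanced does real work, since it is what guarantees the semicontinuity needed for a genuine seminormed vector bundle in the sense of the earlier definition.

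I expect the main obstacle to be the semicontinuity matching, i.e.\ showing that compactness of $T$ as a subset of the total space $\VV_X(\mcL)(\CC)$ is equivalent to upper semicontinuity of the fibrewise seminorm on $\mcL$ (through the dualization). The fibrewise bijection between disks and seminorms is elementary linear algebra in dimension one, and the balancedness/conjugation bookkeeping is routine; but the interplay between the topology on the total space and the semicontinuity of $|\cdot|$ requires care, particularly because the seminorm is only defined at \emph{reduced} points of $X_\CC$ and may be degenerate, so one must confirm that the compact tube still captures exactly the upper semicontinuous seminorms of the preceding definition and nothing more.
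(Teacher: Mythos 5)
Your proposal is correct in outline, and it is worth noting that the paper itself gives no argument for this proposition at all: its ``proof'' is a citation to Bost--Charles. The only thing the paper supplies is the forward construction (dualize the upper semicontinuous seminorm to a lower semicontinuous definite pseudo-norm on $\mcL^\vee$ and take the unit-ball tube), which appears in the discussion just before the statement and which you reproduce. Your reverse direction --- the fibrewise gauge of the slices $T_x = T \cap \mcL^\vee_{|x}$, which by balancedness and compactness are closed disks of finite radius, followed by dualization back to $\mcL$ --- together with your identification of the real technical content (compactness of $T$ in the total space corresponds, through the duality, to upper semicontinuity of the seminorm; here one uses that $X(\CC)$ is compact, so the u.s.c.\ fibrewise radius function is bounded and the closed tube is contained in a compact disk bundle) is exactly what a complete self-contained proof requires, and each of these steps goes through as you describe.

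One small slip in your parenthetical: the degenerate case of the seminorm on $\mcL$ corresponds to a tube fibre of radius $0$ --- there the gauge on $\mcL^\vee_{|x}$ takes the value $+\infty$ off the origin, and its dual is the zero seminorm on $\mcL_{|x}$ --- not to a fibre of radius $+\infty$. Infinite radius is precisely what compactness of $T$ excludes, and it is also why the dual pseudo-norm on $\mcL^\vee$ is always definite even when the seminorm on $\mcL$ is not. This does not affect the structure of your argument, since you correctly restrict the radii to $[0,\infty)$ in the sentence before.
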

\begin{proof}
See \cite{Bost-Charles}.
\end{proof}

Later on, we will use the terminology \emph{seminormed line bundle} for both notions.\\

\begin{ex}

The total space associated to $(\PP^N_\ZZ, \mcO_\PP(1))$ is the following: $$\VV_{\PP^N_{\ZZ}}(\mcO_\PP(1))(\CC) = \{([x_0:\dots : x_N],(y_0,\dots,y_N)) \;|\; [x_0:\dots : x_N]\in \PP^N(\CC),(y_0,\dots,y_N)\in \CC (x_0,\dots,x_N) \} $$
And then, the tube associated to the Fubiny-Study metric is
$$T = \{ ([x_0:\dots : x_N],(y_0,\dots,y_N)) \in \VV_{\PP^N_{\ZZ}}(\mcO_\PP(1))(\CC) \;|\; \sum |y_i|^2 \leq 1 \}  $$
\end{ex}

In particular, the notion of uniformly line bundles definite admits a convenient translation in this setting: 

\begin{prop}
The seminormed line bundle $\overline{\mcL} = (\VV_X(\mcL),T)$ is uniformly definite if, and only if, $T$ contains a neighborhood of the zero section $0_X$.
\end{prop}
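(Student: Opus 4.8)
The plan is to translate uniform definiteness into an inclusion of tubes and then to exploit the compactness of $X(\CC)$, which holds because $X$ is projective over $\Spec \ZZ$. First I would set up the order-reversing duality between norms on $\mcL$ and on $\mcL^\vee$. Passing to the dual line bundle sends a continuous definite norm $|\cdot|'$ on $\mcL$ to a continuous definite norm on $\mcL^\vee$, and under the correspondence between seminormed line bundles and pairs $(\VV_X(\mcL),T)$ the tube of $\overline{\mcL}$ is precisely the set where its dual pseudo-norm is $\le 1$. Since each fibre $\mcL^\vee_{|x}$ is one-dimensional, two norms on it are always comparable and the smaller norm has the larger unit disc; it follows fibrewise, and hence globally, that for two normed structures $|\cdot|'\le|\cdot|$ on $\mcL$ is equivalent to the inclusion $T'\subseteq T$ of the associated tubes. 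I also fix once and for all a continuous reference metric $|\cdot|_0$ on $\mcL^\vee$ (these exist on any line bundle over the compact space $X(\CC)$), and denote by $T_0=\{|\cdot|_0\le 1\}$ its unit tube, which is compact.

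For the direct implication, suppose $\overline{\mcL}$ is uniformly definite, so there is a continuous norm $|\cdot|'$ on $\mcL$ with $|\cdot|\ge|\cdot|'$. Its dual is a continuous norm on $\mcL^\vee$ vanishing exactly along the zero section, so the set where that dual is $<1$ is an open neighborhood of $0_X$ in $\VV_X(\mcL)(\CC)$. This neighborhood is contained in the tube $T'$ of $|\cdot|'$, and by the equivalence above $|\cdot|'\le|\cdot|$ gives $T'\subseteq T$; hence $T$ contains a neighborhood of $0_X$.

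For the converse, which is the substantive direction, suppose $T$ contains an open neighborhood $V$ of $0_X$. The goal is to upgrade this to a fibrewise-uniform tube, and this is exactly where compactness enters. Consider the compact set $T_0\setminus V$; on it the reference norm $|\cdot|_0$ is strictly positive, since $V$ contains the entire zero section, so $|\cdot|_0$ attains a positive minimum $m>0$ there (take $m=1$ if $T_0\setminus V=\emptyset$). I claim $\{|\cdot|_0<m\}\subseteq V$: any point with reference norm $<m\le 1$ lies in $T_0$ but not in $T_0\setminus V$, hence lies in $V$. Scaling the reference metric to $(2/m)\,|\cdot|_0$ produces a continuous norm on $\mcL^\vee$ whose unit tube is contained in $\{|\cdot|_0<m\}\subseteq V\subseteq T$. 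Its dual $|\cdot|'$ on $\mcL$ is a continuous norm whose tube coincides with this unit tube, so the inclusion in $T$ yields $|\cdot|'\le|\cdot|$, proving $\overline{\mcL}$ uniformly definite. This argument incidentally forces definiteness of $|\cdot|$: if some $|\cdot|_x$ were degenerate, the dual pseudo-norm would be infinite off the origin and the fibre $T_x$ would collapse to a point, contradicting that $T$ is a neighborhood of $0_X$.

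The only real obstacle is this uniformity step. Fibrewise the existence of a small disc inside $V$ is immediate, but producing a single continuous norm whose tube fits inside $T$ over all of $X(\CC)$ requires the compactness of the base together with that of the reference unit tube $T_0$; everything else is a formal consequence of the order-reversing duality between norms on $\mcL$ and on $\mcL^\vee$.
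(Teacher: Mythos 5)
Your proof is correct. Note that the paper itself states this proposition without any proof (it is presented as a ``convenient translation'' of Definition \ref{uniformlydef}, followed only by a citation to an example in \cite{NI2022}), so there is no argument in the paper to compare against; your write-up supplies exactly the argument the paper leaves implicit. The two ingredients you isolate are the right ones: the fibrewise order-reversing duality between (semi)norms on $\mcL$ and their dual pseudo-norms on $\mcL^\vee$, which converts $|\cdot|'\le|\cdot|$ into the inclusion of tubes $T'\subseteq T$ (using that biduality is an identity on one-dimensional fibres, including degenerate ones); and the compactness of a reference unit disc bundle $T_0$ in $\VV_X(\mcL)(\CC)$, which upgrades the fibrewise existence of small discs inside a neighborhood $V$ of $0_X$ to a single uniform scale $m>0$ with $\{|\cdot|_0<m\}\subseteq V\subseteq T$. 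Both implications are carried out correctly, and your closing remark that the hypothesis forces definiteness of $|\cdot|$ is also accurate, since a degenerate fibre seminorm would make $T_x=\{0\}$.
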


See \cite[paragraph 2.2]{NI2022} for an example of definite but not uniformly definite seminorm on a line bundle.\\

In this context, we can now extend the definition of a semipositive Hermitian line bundle.

\begin{defn}\label{definitionAmpleLineBundleHolomorphicallyConvex}
Let $X$ be a projective scheme over $\Spec \ZZ$. A seminormed line bundle $\overline{\mcL} = (\VV_X(\mcL),T)$ over $X$ is said to be \emph{semipositive} if $\VV_X(\mcL)$ is a modification over $\Spec \ZZ$ of an affine scheme and if $T$ is a holomorphically convex compact in $\VV_X(\mcL)(\CC)$. \\
Furthermore, a seminormed line bundle $\overline{\mcL}$ over $X$ is \emph{ample} if it is semipositive and for large enough $n$, there is a basis of $H^0(X, \mcL^{\otimes n})$ consisting of strictly effective sections.

\end{defn}

To see that this extends definition \ref{amplehermitian}, see \cite{Bost-Charles}, EGA II \cite[Chapter 2, Sections 8.8 to 8.10]{grothendieck} and \cite[Chapter 2]{forstneric} or \cite[Remark 2.14]{NI2022}.\\

A seminormed line bundle such that the underlying line bundle is ample has an associated structure of semipositive seminormed line bundle. 
Extending Berman's terminology (see \cite{berman2007bergman}) to this formalism, we have the following definition :

\begin{defn}
Let $X$ be a projective scheme over $\Spec \ZZ$. Let $\overline{\mcL} = (\VV_X(\mcL),T)$ be a seminormed line bundle over $X$. Assume furthermore that the underlying line bundle $\mcL$ is ample. The \emph{associated equilibrium seminorm} is the seminorm defined by the holomorphically convex hull $\widehat{T}$ of $T$ in $\VV_X(L)(\CC)$.\\
\end{defn}

\begin{rem} To see why $\widehat{T}$ still defines a seminorm and why this generalises Berman's equilibrium metric, see \cite[Remarks 2.13 and 2.14]{NI2022}.
\end{rem}

\section{Bornological invariants in Arakelov geometry and the arithmetic Hilbert-Samuel theorem}\label{partiebornologies}

\subsection{Seminorms on the space of sections of a seminormed vector bundle\\[2mm]}

Let $X$ be a projective scheme over $\Spec \ZZ$ and $\overline{\mcF}=(\mcF, |\cdot|)$ be a seminormed vector bundle over $X$.\\
Then, the space of sections $H^0(X_\CC,\mcF_{\CC})$ can be endowed with different seminorms.\\[2mm]
For example, if $X$ is a projective scheme over $\Spec \ZZ$ such that $X_{\CC}$ is reduced, we can define a seminorm on $H^0(X,\mcF)\otimes_\ZZ \CC$, by setting $$||s||_{\infty} =  \sup_{x\in X(\CC)} |s(x)|_x$$ for $s\in H^0(X,\mcF)\otimes_\ZZ \CC$.\\
In another hand, if, furthermore, we assume $X_{\CC}$ to be smooth, then a Riemannian continuous metric defines a measure $d\mu$ on the complex manifold $X_\CC$, and we can define a seminorm $$||s||_{L^2}^2 = \int_{X_\CC}|s(x)|_x^2 d\mu$$ for $s\in H^0(X,\mcF)\otimes_\ZZ \CC$.\\

It is to be noted that $||\cdot||_\infty$ and $||\cdot||_{L^2}$ have different interesting properties with respect to the Hilbert-Samuel theorem. $||\cdot||_{L^2}$ is a Hilbertian norms which is needed for using $\widehat{\chi}$ which has an useful additivity property with respect to exact sequences, see lemma \ref{additivityclassical}. One the other hand, $||\cdot||_\infty$ is compatible with the multiplication $H^0(X,\mcL^{\otimes n_1})\times H^0(X,\mcL^{\otimes n_2}) \to H^0(X,\mcL^{\otimes n_1 + n_2})$.\\

\subsection{Bornological structure\\[2mm]}
The formalism developed in \cite{Bost-Charles} defines a bornological structure on the algebra $H^0(\VV_X(\mcL), \mcO_\VV) = \bigoplus_n H^0(X,\mcL^{\otimes n})$ associated with $\overline{\mcL}= (\VV_X(\mcL), T)$, denoted $H^0(\VV_X(\mcL), T, \mcO_\VV)$ or $H^0(\VV_X(\overline{\mcL}), \mcO_\VV)$, and unifying the different seminorms mentioned above.\\

Given a complex vector space $V$ and a decreasing family of seminorms $(||\cdot||_i)_{i\in I}$ on $V$, we may define a bornology $\mcB$ on $V$ by defining bounded sets to be the subsets $B$ of $V$ such that there exists $i\in I$ and $R>0$ with
$$\forall v \in B ,||v||_i \leq R$$
In other words, elements of $\mcB$ are the subsets of $V$ that are bounded with respect to one of the
seminorms $||\cdot||_i$, $i \in I$.\\

\begin{defn} Let $V$ be a complex vector space. A \emph{bornology of Hilbertian type on $V$} is a bornology $\mcB$ induced by a decreasing family of Hilbertian seminorms $\mcN = (||\cdot||_i)_{i\in \NN}$ such that the kernel of the $||\cdot||_i$ is eventually constant.
\end{defn}

\begin{rem}
Two families of seminorms $(||\cdot||_i)_{i\in \NN}$ and $(||\cdot||'_j)_{j\in \NN}$ defines the same bornology if:
\begin{itemize}
    \item for $i\in \NN$, there is $j\in \NN$ and $C>0$ such that $||\cdot||'_j \leq C||\cdot||_i$,
    \item for $j\in \NN$, there is $i\in \NN$ and $C>0$ such that $||\cdot||_i \leq C||\cdot||'_j$.\\
\end{itemize}
\end{rem}

\begin{ex}
Let $Y$ be a complex analytic space. Let $K$ be a compact in $Y$. Let $\mcF$ be coherent sheaf on $Y$. Then, $H^0(K,\mcF)$ is defined to be the topological vector space that is the locally convex direct limit of the spaces $H^0(U,\mcF)$, where $U$ runs through the open neighborhoods of $K$ in $Y$, endowed with their natural Fréchet space topology (see \cite[Chapter 8]{GunningRossi}).\\
By pulling back the bornology through the restriction map $H^0(Y,\mcF) \to H^0(K,\mcF)$, the complex vector space $H^0(Y,\mcF)$ is then endowed with a bornology of Hilbertian type, see \cite{Bost-Charles}.\\
\end{ex} 

\begin{defn}
Let $Y$ be a complex analytic space. Let $K$ be a compact in $Y$. Let $\mcF$ be coherent sheaf on $Y$. The complex space $H^0(Y,\mcF)$ endowed with the bornology described above is denoted $H^0(Y,K,\mcF)$.
\end{defn}

\begin{defn}
An \emph{arithmetic Hilbertian $\mcO_{\Spec \ZZ}$-module} $\overline{M}$ is a countably generated $\ZZ$-module $M$, such that $M_{\CC}=M\otimes_\ZZ \CC$ is endowed with a bornology of Hilbertian type, invariant by complex conjugation.\\
Furthermore we say that $\overline{M}$ is a \emph{graded Hilbertian $\mcO_{\Spec \ZZ}$-module}, if $M$ is graded.\\
\end{defn}

\begin{rem}
The invariant by conjugation property ensures that the bornology can be defined by using a decreasing family of Hilbertian seminorms invariant by conjugation, see \cite{Bost-Charles}.
\end{rem}

Let $\mcF$ be coherent sheaf on a separated scheme $Y$ of finite type over $\Spec \ZZ$. Let $K$ be an invariant by complex conjugation compact in the complex analytic space $Y(\CC)$.\\
Then $H^0(Y, \mcF)$ is endowed with a structure of arithmetic Hilbertian $\mcO_{\Spec \ZZ}$-module, where the bornology on $H^0(Y, \mcF)_\CC = H^0(Y(\CC), \mcF_{|\CC})$ is given by the compact $K$ as above.\\

\begin{defn}
Let $Y$ be a separated scheme of finite type over $\Spec \ZZ$. Let $K$ be an invariant by complex conjugation compact in the complex analytic space $Y(\CC)$. Let $\mcF$ be a coherent sheaf on $Y$.\\
The arithmetic Hilbertian $\mcO_{\Spec \ZZ}$-module structure on $H^0(Y, \mcF)$ described above is denoted $H^0(Y,K,\mcF)$.\\
In the case of a seminormed line bundle $\overline{\mcL}= (\VV_X(\mcL),T)$, the graded arithmetic Hilbertian $\mcO_{\Spec \ZZ}$-module $H^0(\VV_X(\mcL),T,\mcF)$ is also denoted $H^0(\VV_X(\overline{\mcL}),\mcF)$.\\
\end{defn}

\begin{rem} The invariance by conjugation of the bornology associated to $H^0(Y,K,\mcF)$ arises from the invariance by conjugation of the compact $K$, see \cite{Bost-Charles}.
\end{rem}

\subsection{Reduced and smooth cases\\[2mm]}

In the case of a reduced complex analytic space $Y$, $K$ a compact in $Y$ and $\mcF$ a coherent sheaf on $Y$. We have a more explicit description of the bornology associated to $H^0(Y, K,\mcF)$, which makes the relationship with the $L^2$-norms and $L^\infty$-norms more explicit.\\

\begin{prop}\label{reducedcasenomrlp}
 Let $Y$ be a reduced complex analytic space, let $K$ be a compact subset of $Y$, and let $\mcF$ be a Hermitian vector bundle over $Y$.\\
 Then the bornology on $H^0(K, \mcF)$ is induced by the limit of $L^\infty$-norms on relatively compact neighborhoods of $K$ in $Y$.\\
Furthermore, if we assume that $Y$ is smooth, then by choosing a Riemannian continuous metric on $Y$, we can define $L^p$-norms, for $1\leq p<\infty$. \\
Then the bornology on $H^0(K, \mcF)$ is induced by the limit of $L^p$-norms on relatively compact neighborhoods of $K$ in $Y$, for $1\leq p\leq \infty$.
\end{prop}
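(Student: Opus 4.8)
The plan is to compare, on $H^0(K,\mcF)$, the bornology coming from the canonical Fréchet topologies of the spaces $H^0(U,\mcF)$ with the bornologies defined by the $L^\infty$- and $L^p$-norms, using the criterion recorded in the remark above: two decreasing families of seminorms induce the same bornology as soon as each member of one family is dominated by a constant multiple of some member of the other. The point is that the seminorms are indexed by neighborhoods of $K$ that one is free to shrink, so it suffices to dominate a seminorm on a given neighborhood by one on a slightly larger neighborhood.

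First I would settle the $L^\infty$-case. Since $Y$ is reduced and $\mcF$ is locally free, the canonical topology on $H^0(U,\mcF)$ (as in \cite[Chapter 8]{GunningRossi} and \cite{Bost-Charles}) is the topology of uniform convergence on compact subsets of $U$; measuring pointwise sizes with the fixed continuous Hermitian metric and trivializing $\mcF$ by local frames over a finite cover of any compact $L\subseteq U$, this topology is generated by the seminorms $s\mapsto \sup_{L}|s|$. Reducedness is essential here: it ensures that $\sup_L|s|$ vanishes only if $s$ vanishes on $L$, so these sup-seminorms generate the canonical topology and not a strictly coarser one. Consequently a set is bounded in $H^0(U,\mcF)$ exactly when it is uniformly bounded on each compact $L\subseteq U$. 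Given a set $B$ bounded for the direct-limit bornology, I represent it inside some $H^0(U,\mcF)$, choose a relatively compact open neighborhood $V$ of $K$ with $\overline V\subseteq U$, and observe that $\overline V$ is a compact subset of $U$, so $\sup_V|s|$ is bounded on $B$; thus $B$ is $L^\infty(V)$-bounded. Conversely, an $L^\infty(V)$-bound dominates $\sup_L|s|$ for every compact $L\subseteq V$, so such a $B$ is bounded in $H^0(V,\mcF)$ and hence for the direct-limit bornology. This gives the equality of the two bornologies.

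For the $L^p$-statement with $1\le p<\infty$ and $Y$ smooth, one inclusion is immediate: a relatively compact neighborhood $V$ has finite volume, so $\|s\|_{L^p(V)}\le \mu(V)^{1/p}\|s\|_{L^\infty(V)}$ and every $L^\infty(V)$-bounded set is $L^p(V)$-bounded. The hard part will be the reverse domination, bounding an $L^\infty$-norm on a smaller neighborhood by an $L^p$-norm on a larger one. Here I would use that $|f|^p$ is plurisubharmonic for holomorphic $f$, hence satisfies the sub-mean-value inequality: in a chart where smoothness identifies $Y$ with a domain in $\CC^n$, the value $|f(z_0)|^p$ is bounded by the average of $|f|^p$ over a ball of fixed radius about $z_0$, and therefore by the $L^p$-integral over that ball. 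After trivializing $\mcF$ by a frame and using that on the compact $\overline V$ the Hermitian metric and the Riemannian measure are two-sidedly comparable to the Euclidean ones, this yields, on a relatively compact neighborhood $V'\Subset V$ of $K$, an estimate $\sup_{V'}|s|\le C\,\|s\|_{L^p(V)}$ with $C$ independent of $s$. Covering $K$ by finitely many charts makes $C$ global, so every $L^p(V)$-bounded set is $L^\infty(V')$-bounded; together with the easy inclusions this shows that all the families for $1\le p\le\infty$ induce the direct-limit bornology.

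The main obstacle is precisely this last estimate. The seminorms $\|\cdot\|_{L^p(V)}$ and $\|\cdot\|_{L^\infty(V)}$ are genuinely inequivalent on a fixed $V$, so the equivalence of bornologies cannot hold seminorm-by-seminorm and forces the passage to a smaller neighborhood; the work is to make the sub-mean-value argument uniform over all base points (a single ball radius, controlled chart Jacobians, and uniform two-sided bounds for the metrics on the relevant compacta) and then to patch the finitely many charts covering $K$. Smoothness enters exactly at this step, providing Euclidean local models on which $|f|^p$ is plurisubharmonic and the mean-value inequality is available; for a merely reduced $Y$ one only retains the $L^\infty$-description.
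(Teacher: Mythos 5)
Your route is the natural one, and it should be said up front that the paper itself offers no argument for this proposition: its ``proof'' is a citation to Bost--Charles, so your write-up is in effect reconstructing what that reference establishes. The $L^p$ half of your argument is sound: the easy inequality $\|s\|_{L^p(V)}\le \mu(V)^{1/p}\|s\|_{L^\infty(V)}$ in one direction, and in the other the sub-mean-value inequality for the plurisubharmonic functions $|f|^p$, applied componentwise in local frames with two-sided comparison of the Hermitian metric with the Euclidean one and of the Riemannian measure with Lebesgue measure on compacta, together with the unavoidable shrinking to $V'\Subset V$. This is exactly the standard way to see that the $L^p$-families for $1\le p<\infty$ and the $L^\infty$-family generate the same bornology, and your remarks about where smoothness and reducedness enter are accurate.

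The gap is in the $L^\infty$ half, at the sentence ``I represent it inside some $H^0(U,\mcF)$ \dots\ so $\sup_V|s|$ is bounded on $B$.'' To get a uniform bound on $\overline V$ you need not merely that every element of $B$ extends to $U$, but that $B$ is a \emph{bounded subset of the Fréchet space} $H^0(U,\mcF)$; that is, you are using that every set bounded in the locally convex inductive limit $H^0(K,\mcF)=\varinjlim_U H^0(U,\mcF)$ is contained in, and bounded in, a single step. This regularity property fails for general countable locally convex inductive limits (there are classical examples of non-regular LB-spaces), so it cannot be treated as a mere bookkeeping step: it is the actual technical content of the proposition, and it is precisely what the cited reference proves about these spaces. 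The repair is standard but must be stated: for $U'$ relatively compact in $U$, the restriction map $H^0(U,\mcF)\to H^0(U',\mcF)$ is a compact operator, by Montel/Vitali applied in local embeddings (uniformly bounded families of holomorphic sections on a reduced space are normal --- reducedness is used again here); an inductive limit of Fréchet spaces with compact linking maps is a Silva (DFS) space, and such limits are regular, so their von Neumann bounded sets are exactly the sets bounded in some step. With that ingredient inserted, your two directions for the $L^\infty$-comparison, followed by your $L^p$ versus $L^\infty$ comparison, complete the proof.
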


\begin{proof}
See \cite{Bost-Charles}.
\end{proof}

In the case of the total space of a line bundle, we have the following description
\begin{prop}\label{reducedcaseBornology}
Let $X$ be a smooth projective scheme over $\Spec \CC$, $\overline{\mcL}$ be a Hermitian line bundle over $X$, $\overline{\mcF}$ be a Hermitian vector bundle over $X$. Choose a Riemannian continuous metric on $X$.\\
Take $1 \leq p \leq \infty$.
Let $p_X: \VV_X(\mcL)\to X$ be the total space of $\mcL^{\vee}$.\\
Assume $(||\cdot||_j)_{j\in \NN}$ is a decreasing family of norms defining the bornology on $H^0(\VV_X(\overline{\mcL}),p_X^*\mcF)$. \\
Then, fix $j\in \NN$, there is $\epsilon>0$, $C>0$ such that, for all $n\in \NN$,
$$Ce^{n\epsilon} ||\cdot||_{L^p}\leq ||\cdot||_j \text{ , on }  H^0(X, \mcL^{\otimes n} \otimes \mcF)_\CC $$
Fix $\epsilon>0$, there is $j \in \NN$, $C>0$ such that, for all $n\in \NN$,
$$C||\cdot||_j \leq e^{n\epsilon} ||\cdot||_{L^p} \text{ , on }  H^0(X, \mcL^{\otimes n} \otimes \mcF)_\CC $$
\end{prop}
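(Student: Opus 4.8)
The plan is to compare the canonical bornology on $H^0(\VV_X(\overline{\mcL}), p_X^*\mcF)$, which by its definition is the pullback through the restriction map of the direct-limit bornology on $H^0(T', p_X^*\mcF)$ for $T'$ running over relatively compact neighborhoods of the tube $T$, against the graded family of $L^p$-norms on each $H^0(X, \mcL^{\otimes n}\otimes \mcF)_\CC$. The key geometric input is that $X$ is smooth projective over $\CC$ and $\mcL$ is Hermitian, so $\VV_X(\mcL)(\CC)$ is smooth and reduced away from the complications of the base, and the tube $T$ is a holomorphically convex compact with the circle symmetry coming from the $U(1)$-action on the fibers of $\mcL^\vee$. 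First I would invoke Proposition \ref{reducedcasenomrlp}: since $\VV_X(\mcL)(\CC)$ is smooth, the bornology on $H^0(T, p_X^*\mcF)$ is induced by the $L^p$-norms on relatively compact neighborhoods of $T$, for any fixed $1\leq p\leq \infty$. This reduces the statement to comparing these ambient $L^p$-norms on tubular neighborhoods with the fiberwise $L^p$-norms $||\cdot||_{L^p}$ on the graded pieces.

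The heart of the argument is the following translation. A global section $s\in H^0(X,\mcL^{\otimes n}\otimes\mcF)$ pulls back to a section of $p_X^*\mcF$ on $\VV_X(\mcL)$ which, along each fiber direction, scales homogeneously of degree $n$ in the fiber coordinate $\varphi\in\mcL^\vee_x$. Concretely, at a point $(x,\varphi)$ the value is $s(x)$ contracted with $\varphi^{\otimes n}$, so its pointwise norm is $|s(x)|_x\,|\varphi|_x^{\,n}$. I would then compute the ambient $L^p$-norm of this pullback over a neighborhood of the form $T_\rho=\{\,|\varphi|_x\leq \rho\,\}$: integrating first over the fibers (disks of radius $\rho$ in the fiber metric) produces a factor depending only on $\rho$, $n$ and $p$ — of the shape $\rho^{\,n}$ up to a multiplicative constant polynomial in $n$ — times the fiberwise $L^p$-norm $||s||_{L^p}$ over $X$. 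Thus restricting the bornology to the $n$-th graded piece yields, for the neighborhood $T_\rho$, a norm comparable to $\rho^{\,n}||\cdot||_{L^p}$.

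Both asserted inequalities then follow by choosing $\rho$ on the two sides of $1$. For the first inequality, a member $||\cdot||_j$ of a decreasing defining family dominates (up to a constant) the $L^p$-norm on some neighborhood $T_\rho$ with $\rho>1$, giving $||\cdot||_j\geq C'\rho^{\,n}||\cdot||_{L^p}$; writing $\rho=e^{\epsilon}$ with $\epsilon>0$ and absorbing the polynomial-in-$n$ factor into the exponential at the cost of shrinking $\epsilon$ yields $Ce^{n\epsilon}||\cdot||_{L^p}\leq ||\cdot||_j$. For the second, given $\epsilon>0$ I pick a neighborhood $T_\rho$ with $1<\rho<e^{\epsilon}$; the $L^p$-norm on $T_\rho$ is one of the norms cofinal in the defining family, so some $||\cdot||_j$ is bounded by it, giving $C||\cdot||_j\leq \rho^{\,n}||\cdot||_{L^p}\leq e^{n\epsilon}||\cdot||_{L^p}$ after again absorbing the polynomial factor.

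The main obstacle I anticipate is making the fiberwise integration uniform and clean across \emph{all} degrees $n$ simultaneously: the comparison constant between the ambient $L^p$-norm on $T_\rho$ and $\rho^{\,n}||\cdot||_{L^p}$ carries a factor polynomial in $n$ (from the fiber integral $\int_{|\zeta|\leq\rho}|\zeta|^{pn}$ and, for $p=\infty$, from the supremum), and one must verify that this sub-exponential factor is genuinely harmless, i.e. that it can be absorbed by perturbing the radius $\rho$ by an arbitrarily small amount. This is where the slack between $\rho$ and $e^{\epsilon}$ is essential, and it is precisely the reason the statement is phrased with an $e^{n\epsilon}$ rather than an exact equality of norms. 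A secondary technical point is handling $\mcF$ of positive rank and ensuring the fiber metric and the chosen Riemannian metric on $X$ interact uniformly over the compact base $X$, which follows from compactness but should be checked.
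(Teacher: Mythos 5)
Your proposal is correct, but there is nothing in the paper to compare it against line by line: the paper's ``proof'' of Proposition \ref{reducedcaseBornology} is literally a citation to \cite{Bost-Charles}. Your argument is thus a self-contained derivation, and it is the natural one: apply Proposition \ref{reducedcasenomrlp} to the smooth total space $\VV_X(\mcL)(\CC)$ with compact $K=T$ and Hermitian bundle $p_X^*\mcF$; observe that the compacts $T_\rho=\{\,|\varphi|_x\le\rho\,\}$, $\rho>1$, are cofinal among neighborhoods of $T$ (a compactness argument you leave implicit, but routine, since the $T_\rho$ are compact and decrease to $T$); compute that a degree-$n$ section has pointwise norm $|s(x)|_x\,|\varphi|_x^n$ upstairs, so the ambient $L^p$-norm over $T_\rho$ restricted to the $n$-th graded piece is $\rho^{n}\,||\cdot||_{L^p}$ up to a factor polynomial in $n$ (exactly $\rho^n$ for $p=\infty$, and $\bigl(2\pi/(pn+2)\bigr)^{1/p}\rho^{n+2/p}$ for the product measure when $p<\infty$); then play the two defining families of norms against each other, absorbing the polynomial factor into the slack between $\rho$ and $e^{\epsilon}$. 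Two points should be made explicit to close the argument. First, you use that two decreasing families defining the same bornology mutually dominate each other up to constants; the remark in Section 3 of the paper only states the converse (sufficient) direction, but the direction you need follows from homogeneity: the unit ball of any member of one family is bounded, hence contained in some ball of some member of the other family. Second, the comparison between the ambient Riemannian measure on $\VV_X(\mcL)(\CC)$ and the locally-product measure used in your fiber integration must have constants independent of $n$; this holds because those constants depend only on the two continuous densities on the fixed compact $T_{\rho_0}$, not on the sections integrated, exactly as you anticipate in your closing paragraph. With these two remarks supplied, your proof is complete, and is in all likelihood essentially the argument of \cite{Bost-Charles} that the paper invokes.
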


\begin{proof}
See \cite{Bost-Charles}.
\end{proof}

\subsection{Dependence of the bornology on the seminorm\\[2mm]}

The following proposition states that, on an ample line bundle, a seminorm and its associated equilibrium seminorm induce the same bornology on the complex vector space $H^0(\VV_X(\mcL)(\CC),\mcO_\VV)$.\\

\begin{prop}\label{passageAlenveloppeholo}
Let $X$ be a reduced projective space over $\Spec \ZZ$. Let $\overline{\mcL}= (\VV_X(\mcL),T)$ be a seminormed line bundle over $X$. Assume furthermore that $\mcL$ is ample. 
Then the restriction morphism $H^0(\VV_X(\mcL),\widehat{T},\mcO_\VV) \to H^0(\VV_X(\mcL),T,\mcO_\VV)$ is an isomorphism.
\end{prop}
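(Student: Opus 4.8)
The plan is to observe that both $H^0(\VV_X(\mcL),\widehat{T},\mcO_\VV)$ and $H^0(\VV_X(\mcL),T,\mcO_\VV)$ have the same underlying graded $\ZZ$-module $A = \bigoplus_n H^0(X,\mcL^{\otimes n})$, and that the restriction morphism is simply the identity on $A$; the whole content is that the two bornologies on $A_\CC$ coincide. Since $X$ is reduced, I would first invoke Proposition \ref{reducedcasenomrlp} to describe each bornology concretely: a subset $B \subseteq A_\CC$ is $T$-bounded (resp. $\widehat{T}$-bounded) exactly when the sections in $B$ are uniformly bounded in $L^\infty$-norm on some relatively compact neighborhood of $T$ (resp. of $\widehat{T}$) in $\VV_X(\mcL)(\CC)$. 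Because $\mcL$ is ample, Grauert's criterion makes $\VV_X(\mcL)$ a modification of the affine cone $\Spec A$, so the global holomorphic functions entering the definition of the holomorphically convex hull are precisely the elements of $A_\CC$; in particular the defining property of $\widehat{T}$ gives the maximum-principle identity $\sup_{\widehat{T}}|s| = \sup_{T}|s|$ for every $s \in A_\CC$. The easy inclusion is then immediate: since $T \subseteq \widehat{T}$, every neighborhood of $\widehat{T}$ is a neighborhood of $T$, so every $\widehat{T}$-bounded set is $T$-bounded and the restriction morphism is bounded.

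The remaining, substantive inclusion is that every $T$-bounded set is $\widehat{T}$-bounded. Here I would exploit the fiberwise $\GG_m$-action: for $\lambda \in \CC^\ast$ the scaling $m_\lambda\colon (x,\varphi)\mapsto(x,\lambda\varphi)$ is a holomorphic automorphism of $\VV_X(\mcL)$, it satisfies $m_\lambda(T)\subseteq T$ for $|\lambda|\le 1$ (the defining stability of the tube), and since the hull commutes with holomorphic automorphisms one gets both that $\widehat{T}$ is itself stable under $m_\lambda$ for $|\lambda|\le 1$ and that $\widehat{(1+\delta)T} = (1+\delta)\widehat{T}$ for every $\delta>0$. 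Now let $B$ be $T$-bounded, say $\sup_{s\in B}\sup_{V}|s| = R < \infty$ for an open $V \supseteq T$. The compacts $(1+\delta)T$ decrease to $T$ as $\delta\to 0^+$, so by compactness $(1+\delta)T \subseteq V$ for $\delta$ small, whence $\sup_{s\in B}\sup_{(1+\delta)T}|s|\le R$. Applying the maximum-principle identity to the scaled hull yields
\[
\sup_{s\in B}\ \sup_{(1+\delta)\widehat{T}}|s| \;=\; \sup_{s\in B}\ \sup_{(1+\delta)T}|s| \;\le\; R .
\]
Since the compacts $(1+\delta)\widehat{T}$ decrease to $\widehat{T}$, this is exactly a uniform $L^\infty$-bound for $B$ on a neighborhood basis of $\widehat{T}$, which by Proposition \ref{reducedcasenomrlp} shows $B$ is $\widehat{T}$-bounded, completing the equality of bornologies.

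I expect the main obstacle to be the last step, namely that the scaled hulls $(1+\delta)\widehat{T}$ genuinely furnish relatively compact neighborhoods of $\widehat{T}$ in the direct-limit sense. This is clear over the locus where the seminorm is nondegenerate, but over the locus where $|\cdot|$ degenerates the fibers of $\widehat{T}$ collapse and $(1+\delta)\widehat{T}$ need not contain $\widehat{T}$ in its topological interior. To handle this I would sandwich the relevant neighborhoods between disk-bundle tubes of a fixed continuous reference norm: since $0_X \subseteq T$, any $T$-bounded family is automatically bounded on a neighborhood of the zero section, and the homogeneity of the sections of $A$ along the fibers makes the sup-seminorms attached to neighborhoods of $T$ and of $\widehat{T}$ comparable near the degeneration locus regardless of the collapse. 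Equivalently, one may transport the whole comparison down the modification $\VV_X(\mcL)\to \Spec A$ to the Stein analytic space associated with the affine cone, where the standard pluripotential theory of holomorphically convex hulls applies directly; this is the formal counterpart, in the present bornological language, of the fact that a seminorm and its equilibrium seminorm define the same bornology, as recorded in \cite{Bost-Charles} and \cite{NI2022}.
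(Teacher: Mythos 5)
The paper itself gives no argument for this proposition (its proof is the single citation to \cite{NI2022}, Proposition 4.15), so your attempt has to be judged on its own terms; judged that way, it has a genuine gap, and it is exactly the one you flag yourself at the end. What you do establish is correct: the restriction map is the identity on the underlying module, the inclusion $T \subseteq \widehat{T}$ gives boundedness in the easy direction, and the scaling argument (hulls commute with the automorphisms $m_\lambda$, together with $\sup_{\widehat{K}}|s| = \sup_K |s|$ for global sections) correctly shows that a $T$-bounded set $B$ is uniformly bounded on $(1+\delta)\widehat{T}$ for small $\delta$, and also on a fixed disk bundle around the zero section. The gap is that $\widehat{T}$-boundedness means a uniform bound on some \emph{open neighborhood} of $\widehat{T}$ (Proposition \ref{reducedcasenomrlp}), and since the fiberwise radius function of $\widehat{T}$ is only upper semicontinuous it can jump downward, so $(1+\delta)\widehat{T}$, even united with a disk bundle, need not contain any neighborhood of $\widehat{T}$. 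Moreover this is not a repairable technicality, and it is not confined to the degeneration locus as you suggest: the bounds your argument extracts are \emph{strictly weaker} than $\widehat{T}$-boundedness. Concretely, take $X = \PP^1_\ZZ$ with cone $\CC^2$, and let $T$ be the tube whose radius is $1/100$ everywhere (so the seminorm is even uniformly definite), $1$ on the circle $\{|x|=|y|\}$, and $10$ at the point $[1:0]$; this radius is u.s.c., and one computes $\widehat{T} = \{|x|,|y| \le 2^{-1/2}\} \cup \{(x,0) : |x| \le 10\}$. The family $B = \{s_n := (5/7)^n\, y\, x^{n-1}\}_{n\ge 1}$ is uniformly bounded on the whole unit ball of $\CC^2$ and on $(1+\delta)\widehat{T}$ (its sup on the bidisk part is about $(1/2)^n$, and it vanishes on the spike $\{y=0\}$), yet $|s_n(10,\eta)| = \tfrac{\eta}{10}(50/7)^n \to \infty$ for every fixed $\eta>0$, so $B$ is unbounded on \emph{every} neighborhood of $\widehat{T}$. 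Thus no argument using only the quantities you derive can conclude; the proof must re-use the bound on a genuine neighborhood of $T$ (here the base-thickened neighborhood of the disk $\{y=0,\ |x|\le 10\} \subseteq T$), i.e., the thickening in the base directions, which fiberwise scaling and homogeneity cannot reconstruct.

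For the same reason, neither of your proposed repairs closes the gap. ``Fix A'' asserts the needed comparison (``the sup-seminorms \dots are comparable \dots regardless of the collapse'') rather than proving it, and the example above shows that homogeneity plus disk-bundle bounds is genuinely insufficient information. ``Fix B'' --- descending along the proper map $q : \VV_X(\mcL)(\CC) \to C^{\mathrm{an}}$ to the Stein affine cone --- is a sensible start (one checks $q^{-1}(q(T)) = T$, so $T$-bounds descend to bounds on an open neighborhood of $q(T)$, and $\widehat{T} \subseteq q^{-1}\bigl(\widehat{q(T)}\bigr)$, so neighborhood bounds upstairs would follow from neighborhood bounds downstairs), but the decisive step is still missing there: one needs that the hull, computed in the cone, of a neighborhood of $q(T)$ contains a neighborhood of $\widehat{q(T)}$, and ``standard pluripotential theory'' does not hand this over --- note that the translation trick proving $\widehat{K + \eta \bar{B}} \supseteq \widehat{K} + \eta \bar{B}$ in $\CC^M$ is unavailable, since translations do not preserve the cone and your functions are only controlled on it. Finally, your closing appeal to ``the fact that a seminorm and its equilibrium seminorm define the same bornology'' is circular: that fact \emph{is} the proposition being proved.
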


\begin{proof}
See \cite[proposition 4.15]{NI2022}.
\end{proof}

This gives a glimpse on why the semipositivity assumption can be dropped in the arithmetic Hilbert-Samuel theorem when the coherent sheaf $\mcF$ is $\mcO_X$.

\section{Proof of the arithmetic Hilbert-Samuel theorem}

We first give a proof of the geometric Hilbert-Samuel theorem using the deformation to the projective completion of the cone, classical reductions in the theory of coherent sheaves, and the case of $(\PP^l_\ZZ,\mcO_\PP(1),\mcO_\PP)$. Then, we return to the arithmetic Hilbert-Samuel theorem, prove the case of $\PP^l_\ZZ$ with the line bundle $\mcO_\PP(1)$ endowed with a seminorm invariant under the action of $U(1)^{l+1}$, and finally give a proof along the same lines as the proof of the geometric Hilbert-Samuel theorem.\\
In our proof, we use the arithmetic Hilbert invariants described in \cite{NI2022}. Those are numerical invariants associated to bornological spaces $H^0(\VV_X(\mcL),p_X^*\mcF)$ and they inherit the functorial properties of this bornological structures. We will also use some special case of the deformation to the projective completion of the cone developed in \cite{NI2022}. All the necessary facts are extracted from \cite{NI2022} and given in appendix.

\subsection{The geometric Hilbert-Samuel theorem}\label{geometricproof}
The idea is to use the deformation to the projective completion of the cone to reduce the proof to the case of $(\PP^{l}_{\ZZ}, \mcO_\PP(1),\mcO_\PP)$. This geometric proof is inspired by the classical proof of the Hilbert-Samuel theorem by the "dévissage" process. \\

We first give the definition of the Hilbert function:\\

\begin{defn}
Let $X$ be a projective scheme over $\Spec k$. Let $\mcL$ be a very ample line bundle over $X$ and $\mcF$ be a coherent sheaf over $X$. \\
Then the \emph{Hilbert function} associated to $(X,\mcL,\mcF)$ is $H(n) := H_{X,\mcL,\mcF}(n) = \dim_{k} H^0(X,\mcF\otimes \mcL^{\otimes n})$ for $n \in \NN$.\\
\end{defn}

The invariance by deformation given in corollary \ref{invariancepardeformation}, will play a significant role in our proof of the geometric Hilbert-Samuel theorem :\\

\begin{thm}\label{hilsamgeo}
Let $X$ be a projective scheme over $\Spec k$. Let $\mcL$ be a very ample line bundle over $X$. Let $\mcF$ be a coherent sheaf over $X$. \\
Then the Hilbert function associated to $(X,\mcL,\mcF)$ is a numerical polynomial for $n\gg 0$.
\end{thm}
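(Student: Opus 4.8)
The plan is to follow the classical route: first reduce the Hilbert function to an Euler characteristic, and then prove by dévissage that this Euler characteristic is a numerical polynomial. Since $\mcL$ is very ample it is in particular ample, so Serre vanishing gives $H^i(X,\mcF\otimes\mcL^{\otimes n})=0$ for all $i>0$ once $n\gg0$; hence $H(n)=\chi(X,\mcF\otimes\mcL^{\otimes n})$ for $n\gg0$, and it suffices to show that $n\mapsto\chi(X,\mcF\otimes\mcL^{\otimes n})$ agrees with a numerical polynomial. I would prove the stronger statement that for every coherent $\mcF$ the function $P_\mcF(n):=\chi(X,\mcF\otimes\mcL^{\otimes n})$ is a numerical polynomial of degree at most $d=\dim\operatorname{Supp}\mcF$, by induction on $d$.

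For the inductive step, I would choose a global section $s\in H^0(X,\mcL)$ (available since $\mcL$ is very ample) that does not vanish at any associated point of $\mcF$. Multiplication by $s$ is then injective on $\mcF$, and its cokernel $\mcQ$ is supported on the intersection of $\operatorname{Supp}\mcF$ with the zero locus of $s$, hence has dimension at most $d-1$. Tensoring the short exact sequence $0\to\mcF\to\mcF\otimes\mcL\to\mcQ\to0$ with $\mcL^{\otimes n}$ and using additivity of $\chi$ on exact sequences yields $P_\mcF(n+1)-P_\mcF(n)=P_\mcQ(n)$. By the induction hypothesis $P_\mcQ$ is a numerical polynomial of degree at most $d-1$, and the elementary finite-difference lemma (a $\ZZ$-valued function whose first difference is a numerical polynomial of degree $e$ is itself a numerical polynomial of degree $e+1$) then shows $P_\mcF$ is a numerical polynomial of degree at most $d$.

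The base case $d\le0$ is immediate: $\mcF$ is supported on finitely many closed points, where $\mcL$ is trivial, so $\chi(X,\mcF\otimes\mcL^{\otimes n})=\dim_k H^0(X,\mcF)$ is constant. One may alternatively organise the reduction by dévissage, filtering $\mcF$ by coherent subsheaves with quotients of the form $\iota_*\mcG$ for closed immersions $\iota\colon Z\hookrightarrow X$ with $Z$ integral, and using the projection formula $\chi(X,\iota_*\mcG\otimes\mcL^{\otimes n})=\chi(Z,\mcG\otimes(\mcL|_Z)^{\otimes n})$ together with additivity of $\chi$; this is exactly the closed-immersion projection formula and additivity recorded earlier for the arithmetic invariants, so the geometric argument runs in parallel with the intended arithmetic one. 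Finally, $P_\mcF(n)$ is integer-valued because it equals an Euler characteristic for every $n\in\ZZ$, so the polynomial it agrees with is automatically numerical.

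The main obstacle is the choice of the section $s$: the requirement that $s$ avoid the finitely many associated points of $\mcF$ excludes $s$ from finitely many proper linear subspaces of $H^0(X,\mcL)$, which is harmless over an infinite field but can fail over a finite one, where a single member of the linear system $|\mcL|$ need not miss all prescribed points. I would circumvent this by base-changing to an infinite extension $k'\supseteq k$ (for instance $k'=\overline{k}$): the dimensions $\dim_k H^0$, and hence $\chi$, are unchanged under flat extension of the base field, so it suffices to prove polynomiality after such a base change. The only other delicate point is the passage from the first difference back to the function itself, handled by the standard finite-difference lemma quoted above.
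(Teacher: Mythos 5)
Your proof is correct, but it takes a genuinely different route from the paper's. You run the classical cohomological argument: Serre vanishing to identify $H(n)$ with $\chi(X,\mcF\otimes\mcL^{\otimes n})$ for $n\gg0$, then induction on $d=\dim\operatorname{Supp}\mcF$ via a section $s\in H^0(X,\mcL)$ avoiding the associated points of $\mcF$, additivity of $\chi$ on the resulting short exact sequence, and the finite-difference lemma; your base change to an infinite field to guarantee the existence of $s$ is exactly the right fix for the finite-field case. The paper deliberately avoids cohomology and Euler characteristics altogether: after reducing to $\mcF=\mcO_X$ by the Hilbert syzygy theorem (point 2 of lemma \ref{ensembledesreductions}), it applies the deformation to the projective completion of the cone repeatedly (corollary \ref{CombinaisonActionGm} together with the deformation-invariance of Hilbert functions, corollary \ref{invariancepardeformation}) to degenerate $X\subset\PP^N_k$ to a subscheme stable under the coordinate torus action, i.e.\ cut out by a monomial ideal, whose irreducible components are thickenings of linear subspaces; dévissage (lemma \ref{filtrationdeviss} and points 2 and 3 of lemma \ref{ensembledesreductions}) then reduces everything to the explicit computation $H_{(\PP^l_k,\mcO_\PP(1),\mcO_\PP)}(n)=\binom{n+l}{l}$. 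What the paper's longer route buys is stated immediately after its proof: each step (syzygy reduction, deformation invariance, dévissage, direct computation on projective space) has an arithmetic counterpart, so the geometric proof serves as a template for theorem \ref{thprincipalbis}, whereas your key tools --- Serre vanishing, Euler characteristics, and the difference operator --- have no analogue for the bornological invariants $\overline{c}_r$ and $\underline{c}_r$. Your argument is shorter and more standard, but it would not transfer to the arithmetic setting that is the point of the paper.
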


Before proving theorem \ref{hilsamgeo}, let's describe a few reductions: \\

\begin{lemma}\label{CompatibilitySubspaces}Let $X$ be a projective scheme over $\Spec k$. Let $\mcL$ be a line bundle over $X$.\\
If $\iota : Y \to X$ is a closed immersion and $\mcF$ is a coherent sheaf on $Y$, then there are isomorphisms between  $H^0(X, \mcL^{\otimes n} \otimes \iota_*\mcF)$ and $H^0(Y, \iota^*\mcL^{\otimes n} \otimes \mcF )$.\\
In particular, $(X,\mcL,\iota_* \mcF)$ and $(Y,\iota^*\mcL, \mcF)$ have the same Hilbert functions.
\end{lemma}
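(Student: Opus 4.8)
The plan is to establish the isomorphism $H^0(X, \mcL^{\otimes n} \otimes \iota_*\mcF) \cong H^0(Y, \iota^*\mcL^{\otimes n} \otimes \mcF)$ directly from the projection formula for the closed immersion $\iota : Y \to X$. First I would recall that for a closed immersion $\iota$, the pushforward $\iota_*$ is exact, and the projection formula gives a canonical isomorphism of sheaves on $X$:
\begin{equation*}
\mcL^{\otimes n} \otimes_{\mcO_X} \iota_*\mcF \;\cong\; \iota_*\bigl(\iota^*\mcL^{\otimes n} \otimes_{\mcO_Y} \mcF\bigr).
\end{equation*}
This is the standard projection formula, valid because $\mcL^{\otimes n}$ is locally free (indeed a line bundle), so tensoring commutes with pushforward along the closed immersion. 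The point is that $\iota_*$ does not change the value of the module on sections over the underlying topological space, it merely extends by zero off $Y$.

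Next I would take global sections of both sides. Since $\iota$ is a closed immersion, hence affine and in particular a homeomorphism onto its image, we have $H^0(X, \iota_*\mcG) = H^0(Y, \mcG)$ for any coherent sheaf $\mcG$ on $Y$; this is just the definition of the global sections of a pushforward, $\Gamma(X, \iota_*\mcG) = \Gamma(\iota^{-1}(X), \mcG) = \Gamma(Y, \mcG)$. Applying this with $\mcG = \iota^*\mcL^{\otimes n} \otimes \mcF$ and combining with the sheaf isomorphism above yields
\begin{equation*}
H^0(X, \mcL^{\otimes n} \otimes \iota_*\mcF) \;\cong\; H^0\bigl(X, \iota_*(\iota^*\mcL^{\otimes n} \otimes \mcF)\bigr) \;\cong\; H^0(Y, \iota^*\mcL^{\otimes n} \otimes \mcF),
\end{equation*}
which is exactly the claimed isomorphism. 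These isomorphisms are functorial and compatible as $n$ varies, so they match up the two graded modules $\bigoplus_n H^0(X, \mcL^{\otimes n} \otimes \iota_*\mcF)$ and $\bigoplus_n H^0(Y, \iota^*\mcL^{\otimes n} \otimes \mcF)$.

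The final conclusion about Hilbert functions is then immediate: taking $k$-dimensions of the isomorphic vector spaces gives $H_{X,\mcL,\iota_*\mcF}(n) = H_{Y,\iota^*\mcL,\mcF}(n)$ for every $n$, so the two triples have identical Hilbert functions. I do not expect any genuine obstacle here, as this is a foundational compatibility statement; the only point requiring a word of care is that the projection formula be stated with $\mcL^{\otimes n}$ locally free so that the isomorphism of sheaves holds on the nose rather than only derived, which is automatic since $\mcL$ is a line bundle. If one wants to be fully explicit one can instead verify the isomorphism affine-locally, where $\iota$ corresponds to a surjection of rings $A \twoheadrightarrow B$ and the statement reduces to the elementary identity $L \otimes_A M = L \otimes_A (M \text{ as } A\text{-module})$ for $M$ a $B$-module and $L$ a free $A$-module, but the projection-formula route is cleaner and preferable.
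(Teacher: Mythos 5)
Your proof is correct and follows essentially the same route as the paper, which simply invokes the projection formula and the compatibility of pullbacks with tensor products (citing Vakil, Chapter 16); your write-up just makes explicit the two standard ingredients, namely $\mcL^{\otimes n} \otimes \iota_*\mcF \cong \iota_*(\iota^*\mcL^{\otimes n} \otimes \mcF)$ and $\Gamma(X,\iota_*\mcG) = \Gamma(Y,\mcG)$.
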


\begin{proof}
This is the projection formula and compatibility of pullbacks and tensor products, see \cite[chapter 16]{Vakil}.\\
\end{proof}

\begin{lemma}\label{filtrationdeviss} Let $X$ be a projective scheme over $\Spec k$. Let $\mcL$ be an ample line bundle over $X$, $\mcF$ a coherent sheaf on $X$.\\
Let $0 = \mcF_0 \subset \mcF_1 \subset \dots \subset \mcF_r = \mcF$ be a filtration of $\mcF$ by coherent subsheaves, then for $n\gg 0$, the Hilbert function of $(X,\mcL,\mcF)$ is the sum of the Hilbert function of $(X,\mcL,\mcF_i/\mcF_{i-1})$ for $1 \leq i \leq r$.
\end{lemma}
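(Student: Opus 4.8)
The plan is to reduce the multi-step filtration to the case of a single short exact sequence and then induct. For each index $i$ with $1 \le i \le r$, the inclusion $\mcF_{i-1} \subset \mcF_i$ yields a short exact sequence of coherent sheaves
$$0 \to \mcF_{i-1} \to \mcF_i \to \mcF_i/\mcF_{i-1} \to 0.$$
Since $\mcL^{\otimes n}$ is a line bundle, hence locally free and flat, tensoring preserves exactness, giving
$$0 \to \mcF_{i-1}\otimes \mcL^{\otimes n} \to \mcF_i \otimes \mcL^{\otimes n} \to (\mcF_i/\mcF_{i-1})\otimes \mcL^{\otimes n} \to 0.$$
The first step is to pass to the associated long exact sequence in cohomology, whose beginning reads
$$0 \to H^0(X, \mcF_{i-1}\otimes \mcL^{\otimes n}) \to H^0(X, \mcF_i\otimes \mcL^{\otimes n}) \to H^0(X, (\mcF_i/\mcF_{i-1})\otimes \mcL^{\otimes n}) \to H^1(X, \mcF_{i-1}\otimes \mcL^{\otimes n}) \to \cdots$$

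The key ingredient, and the step I expect to do the real work, is Serre vanishing: because $\mcL$ is ample on the projective scheme $X$, for each fixed coherent sheaf $\mcF_{i-1}$ there is an integer $N_i$ such that $H^1(X, \mcF_{i-1}\otimes \mcL^{\otimes n}) = 0$ for all $n \ge N_i$. Taking $N = \max_{1\le i \le r} N_i$, which is legitimate since the filtration is finite, I ensure that for all $n \ge N$ every connecting map above vanishes, so the three-term sequence on global sections is short exact for every $i$ simultaneously. This is the main obstacle in the sense that it is the only nonformal input; once the higher cohomology is killed, the argument is purely additive.

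Having a short exact sequence of finite-dimensional $k$-vector spaces, I then use additivity of dimension to obtain, for all $n \ge N$,
$$H_{X,\mcL,\mcF_i}(n) = H_{X,\mcL,\mcF_{i-1}}(n) + H_{X,\mcL,\mcF_i/\mcF_{i-1}}(n).$$
Finally I sum this identity over $i = 1, \dots, r$: the terms $H_{X,\mcL,\mcF_{i-1}}$ telescope, and using $\mcF_0 = 0$ and $\mcF_r = \mcF$ I conclude that for all $n \ge N$,
$$H_{X,\mcL,\mcF}(n) = \sum_{i=1}^{r} H_{X,\mcL,\mcF_i/\mcF_{i-1}}(n),$$
which is exactly the claimed equality for $n \gg 0$. (Equivalently, one may run a single induction on $r$, the case $r=1$ being trivial and the inductive step being the short exact sequence for the top layer $\mcF_{r-1} \subset \mcF_r$ combined with the induction hypothesis applied to the induced filtration of $\mcF_{r-1}$.)
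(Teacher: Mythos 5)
Your proof is correct and follows essentially the same route as the paper: the paper's proof also rests on the short exactness of the sequence of global sections for $n \gg 0$ (with Serre vanishing left implicit) followed by additivity of $\dim_k$. You have simply made explicit the role of Serre vanishing and the finiteness of the filtration in choosing a uniform $N$, which the paper compresses into ``for $n \gg 0$''.
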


\begin{proof}
For $n\gg 0$, we have a short exact sequence $0 \to H^0(X,\mcL^{\otimes n}\otimes \mcF_{i-1})\to H^0(X,\mcL^{\otimes n}\otimes \mcF_{i}) \to H^0(X,\mcL^{\otimes n}\otimes \mcF_i/\mcF_{i-1}) \to  0$. By additivity of $\dim_k$, this concludes.\\
\end{proof}

The additivity of the Hilbert function for $n\gg 0$ over exact sequences of coherent sheaves allows useful reductions : \\

\begin{lemma}\label{ensembledesreductions} Let $X$ be a projective scheme over $\Spec k$. Let $\mcL$ be a very ample line bundle over $X$, $\mcF$ a coherent sheaf on $X$. In high degree, 
\begin{enumerate}
    \item if the ideal sheaf $\mcI$ define the underlying reduced space $X_{red}$, and $\mcI^r= 0$, then the Hilbert function of $(X,\mcL,\mcF)$ is the sum of the Hilbert functions of $(X_{red}, \mcL_{|X_{red}}, \mcI^j\mcF/\mcI^{j+1}\mcF)$ for $0\leq j \leq r-1$.
    \item the Hilbert function of $(X,\mcL,\mcF)$ is an alterned sum of Hilbert functions of $(X,\mcL,\mcO_X)$ shifted by some integers.
    \item the Hilbert function of $(X,\mcL,\mcF)$ is a sum of Hilbert functions of triples of the form $(V,\mcL_{|V}, \mcI)$ where $V$ is an integral closed subscheme of $X$, and $\mcI$ is an ideal sheaf on $V$.
\end{enumerate}
\end{lemma}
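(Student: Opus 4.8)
The plan is to establish each of the three reductions by combining the two preceding lemmas—Lemma \ref{CompatibilitySubspaces} (projection formula for closed immersions) and Lemma \ref{filtrationdeviss} (additivity along filtrations in high degree)—with standard finiteness facts from the theory of coherent sheaves on a Noetherian projective scheme. Throughout I work in the regime $n \gg 0$, so that the Hilbert function is genuinely additive over short exact sequences; this is exactly what Lemma \ref{filtrationdeviss} secures.

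For part (1), I would use the nilpotent ideal sheaf $\mcI$ defining $X_{red}$ to build the $\mcI$-adic filtration on $\mcF$:
$$
\mcF \supseteq \mcI\mcF \supseteq \mcI^2\mcF \supseteq \dots \supseteq \mcI^r\mcF = 0,
$$
which is finite precisely because $\mcI^r = 0$. The successive quotients $\mcI^j\mcF/\mcI^{j+1}\mcF$ are annihilated by $\mcI$, hence are coherent sheaves on the closed subscheme $X_{red}$; pushing forward along $\iota\colon X_{red}\hookrightarrow X$ and applying Lemma \ref{CompatibilitySubspaces} identifies their Hilbert functions with those of $(X_{red}, \mcL_{|X_{red}}, \mcI^j\mcF/\mcI^{j+1}\mcF)$. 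Lemma \ref{filtrationdeviss} then expresses $H_{X,\mcL,\mcF}$ as the sum of the pieces for $0 \leq j \leq r-1$.

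For parts (2) and (3), the engine is the fact that, since $\mcL$ is very ample, any coherent sheaf $\mcF$ admits a finite resolution by sheaves of the form $\mcL^{\otimes -m}{}^{\oplus a}$ (for suitable $m, a$), giving part (2) after twisting: each term in the resolution has Hilbert function a shift of that of $(X,\mcL,\mcO_X)$, and the alternating-sum statement follows by iterating the additivity of Lemma \ref{filtrationdeviss} over the (possibly long) exact sequence—here one splits the resolution into short exact sequences via its syzygies. For part (3), I would instead take a filtration of $\mcF$ by coherent subsheaves whose successive quotients are of the form $\iota_{V,*}\mcI_V$ for $V$ an integral closed subscheme and $\mcI_V$ an ideal sheaf on $V$; such a \emph{dévissage} filtration exists on any Noetherian scheme because every coherent sheaf is filtered with quotients $\mcO_V$-torsion-supported pushforwards from integral closed subschemes. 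Applying Lemma \ref{CompatibilitySubspaces} to each quotient to descend to $V$, then Lemma \ref{filtrationdeviss} to sum the contributions, yields the claim.

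The main obstacle I anticipate is bookkeeping the interplay between the \emph{high-degree} hypothesis and the finiteness of the filtrations: each short exact sequence in Lemma \ref{filtrationdeviss} is only additive for $n$ past some threshold, so I must check that finitely many thresholds—one per step of each finite filtration or resolution—can be taken simultaneously, which is fine since all filtrations here are finite by Noetherianity. A secondary subtlety, mostly for part (3), is justifying the existence of the dévissage filtration with integral-subscheme quotients cleanly; I would cite the standard structure theorem for coherent sheaves on Noetherian schemes (e.g. \cite[chapter 16]{Vakil}) rather than reprove it, keeping the argument at the level of assembling known filtrations and invoking the two lemmas above.
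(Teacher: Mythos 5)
Your proposal follows the paper's proof essentially step for step: part (1) is the $\mcI$-adic filtration combined with Lemmas \ref{filtrationdeviss} and \ref{CompatibilitySubspaces}; part (3) is the dévissage filtration with graded pieces ${\iota_j}_*\mcI_j$ supported on integral closed subschemes (the paper cites the Stacks Project for its existence), again followed by the same two lemmas; and part (2) rests, as in the paper, on a finite resolution of $\mcF$ by finite direct sums of twists of $\mcL$. Your remark about choosing a single threshold $n\gg 0$ valid for the finitely many short exact sequences involved is routine and implicit in the paper.

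There is, however, one step in your part (2) that can genuinely fail, and it deserves to be flagged even though the paper's own proof shares the same defect. A \emph{finite} resolution of $\mcF$ by sheaves of the form $\bigl(\mcL^{\otimes -m}\bigr)^{\oplus a}$ on $X$ itself does not follow from very ampleness: very ampleness yields surjections, hence resolutions, but on a singular scheme these cannot in general be truncated after finitely many steps. Concretely, let $X\subset\PP^2_k$ be an integral nodal cubic with node $p$ and let $\mcF=k(p)$ be the skyscraper sheaf at $p$. A finite resolution by locally free sheaves would give $k(p)$ finite projective dimension over $\mcO_{X,p}$, forcing $\mcO_{X,p}$ to be regular by the Auslander--Buchsbaum--Serre criterion, a contradiction. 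In fact the conclusion of point (2) fails for this example if read literally: the Hilbert function of $(X,\mcL,\mcF)$ is the constant $1$, whereas in high degree any integral combination of shifted Hilbert functions of $(X,\mcL,\mcO_X)$ takes only values divisible by $3$, the Hilbert function of the cubic being $3n$. The finiteness must instead come from the Hilbert Syzygy theorem over a polynomial ring, i.e.\ after pushing forward along the closed immersion $\iota:X\to\PP^N_k$ defined by $\mcL$: resolve the finitely generated graded $k[x_0,\dots,x_N]$-module $\bigoplus_n H^0(X,\mcF\otimes\mcL^{\otimes n})$ by finite graded free modules and transfer back using Lemma \ref{CompatibilitySubspaces}. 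The alternating sum is then of shifts of the Hilbert function of $(\PP^N_k,\mcO_\PP(1),\mcO_\PP)$, that is of binomial coefficients $\binom{n+l_i+N}{N}$, rather than of $(X,\mcL,\mcO_X)$. This corrected form is all that the proof of Theorem \ref{hilsamgeo} actually needs (the reduction to $\mcF=\mcO_X$ becomes a reduction to the ambient projective space), so nothing downstream breaks; but both your justification ("finite resolution on $X$ from very ampleness") and the paper's citation of the Syzygy theorem on $X$ should be understood as taking place on $\PP^N_k$ after pushforward.
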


\begin{proof}
For point 1, as there is a filtration $0 = \mcI^r\mcF \subset \mcI^{r-1}\mcF \subset \dots \subset \mcI\mcF \subset \mcF$ and as $\mcI^j\mcF/\mcI^{j+1}\mcF$ are supported on $X_{red}$, this is a consequence of lemmas \ref{filtrationdeviss} and \ref{CompatibilitySubspaces}.  \\ 
For point 2, this is a consequence of the Hilbert Syzygy theorem (see \cite{eisenbud}) that there is a finite resolution of $\mcF$ by coherent sheaves of the forms $\bigoplus_i \mcL^{\otimes l_i}$.\\
For point 3, there is a filtration $0= \mcF_0 \subset \mcF_1 \subset \dots \mcF_r = \mcF$ by coherent subsheaves such that for $1\leq j \leq r$, there is an integral subscheme $\iota_j : V_j \to X$ and an ideal sheaf $\mcI_j \subset \mcO_{V_j}$ such that $\mcF_j/\mcF_{j+1} \simeq {\iota_j}_* \mcI_j$, see    
\cite[\href{https://stacks.math.columbia.edu/tag/01YF}{Lemma 01YF}]{stacks-project}. Thus, by lemma \ref{filtrationdeviss}, this concludes.\\
\end{proof}

\begin{proof}[Proof of theorem \ref{hilsamgeo}]
Firstly, by point $2$ of \ref{ensembledesreductions}, we can assume that $\mcF = \mcO_X$.\\
By applying the deformation to the projective completion of the cone repetitively by keeping track of closed immersions to $\PP^N_k$, using corollary \ref{CombinaisonActionGm} and lemma \ref{invariancepardeformation}, we can assume that there is a closed immersion induced by sections of $\mcL$, $X\to \PP^N_k = \Proj k[x_0, \dots, x_N]$ such that for all $0\leq i \leq N$, $X$ is stable by the action of $\GG_m$ by multiplication on $x_i$. Furthermore, by basic considerations on the Rees algebras, the dimension is non-increasing by this deformations, see \cite[ theorem 15.7]{matsumura_1987}.\\
By noting then that the surjection $k[x_0, \dots, x_N] \to \bigoplus H^0(X,\mcL^{\otimes n})$ is graded in each variable $x_i$, its kernel is an ideal generated by monomes $\underline{x}^{\underline{i}}$, the irreducible components of $X$ are thickening of $\PP^l_k$.\\
By point $3$ of \ref{ensembledesreductions}, we are reduced to prove the theorem for the integral subschemes $V_i$ of $X$ and $\mcF$ is an ideal sheaf of $\mcO_{V_i}$. By point $2$ of \ref{ensembledesreductions} again, we can assume that $\mcF = \mcO_{V_i}$.\\
Then, two cases arise, either $\dim V_i < \dim X$ and we conclude by induction, either $\dim V_i = \dim X$ and $V_i$ is isomorphic to $\PP^l_k$ with $l = \dim X$.\\
Finally, by using point $2$ of \ref{ensembledesreductions}, we are reduced to the case $(\PP^l_k, \mcO_\PP(1), \mcO_\PP)$. And, in this case the theorem is true as $H_{(\PP^l_k, \mcO_{\PP}(1), \mcO_\PP)}(n) = \binom{n+l}{l}$, for $n\geq 0$.
\end{proof}

This proof relies entirely on geometric constructions and the knowledge of the case of $(\PP^l_k, \mcO_{\PP}(1), \mcO_\PP)$. In the arithmetic case, we will prove that the same reductions are possible, and this will lead in exactly the same manner to a proof of the arithmetic Hilbert-Samuel theorem. \\

\subsection{The case of $\PP^l_\ZZ$.}\label{CaseofPnO1}
The standard case of the arithmetic Hilbert Samuel theorem is the case of $\PP^l_\ZZ = \Proj \ZZ[x_0, \dots x_l]$, with the line bundle $\mcO_\PP(1)$ endowed with a seminorm stable by the action of $U(1)^{l+1}$ by multiplication on $x_0, \dots , x_l$.\\
In this case, the arithmetic Hilbert-Samuel theorem is a consequence of Fekete's superadditivity lemma. \\
In this paragraph, we will furthermore prove theorem \ref{CasSupplementairedePn}, by using an explicit version of the deformation to the projective completion of the cone argument, which allows us to bypass the uniformly definite hypothesis in this case. In this case, the main idea is that the deformation to the projective completion of the cone do not modify $\PP^l_\ZZ$ but only act on the analytic tube.\\

\subsubsection{A superadditivity lemma.} Firstly, in order to use Fekete's lemma effectively, we will need the following proposition on the stability of the superadditivity property by taking means.\\

\begin{prop}\label{superadditive}
Let $r, l\in \NN$.\\
Let $(\alpha_{\underline{i},\underline{j}})_{(\underline{i},\underline{j})\in \NN^{r+1}\times \NN^l}$ be a sequence of real numbers which is superadditive for the monoid law of $\NN^{r+1+l}$.\\ Define $\beta_{n,\underline{j}} = \displaystyle\frac{1}{P_r(n)} \sum_{|\underline{i}|=n} \alpha_{\underline{i},\underline{j}}$ with $P_r(n) = 	\displaystyle\binom{n+r}{r} = \# \{\underline{i}\in \NN^r \;|\; |\underline{i}|= i_0 + \dots + i_{r} = n\}$. 
\\Then
$(\beta_{n,\underline{j}})_{(n,\underline{j})\in \NN^{1+l}}$ is superadditive for the monoid law of $\NN^{1+l}$, that is
$$ \beta_{n,\underline{j}} + \beta_{m,\underline{j'}} \leq \beta_{n+m,\underline{j}+\underline{j'}} \text{ for all } (n,\underline{j}) \text{ and } (m,\underline{j'}) \text{ in } \NN^{1+l}$$
\end{prop}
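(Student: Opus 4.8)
\subsection{Proof proposal (plan)}

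The plan is to read the statement as a \emph{transport/coupling} inequality and reduce it, using only the superadditivity of $\alpha$, to the existence of a single probabilistic coupling of uniform measures on simplices. Write $S_N = \{\underline i \in \NN^{r+1} : |\underline i| = N\}$, so that $\# S_N = P_r(N)$ and $\beta_{N,\underline j} = \EE_{\underline I \sim \mathrm{Unif}(S_N)}[\alpha_{\underline I, \underline j}]$. Suppose we can produce a joint law for a pair $(\underline I, \underline I') \in S_n \times S_m$ with $\underline I \sim \mathrm{Unif}(S_n)$, $\underline I' \sim \mathrm{Unif}(S_m)$, and whose coordinatewise sum satisfies $\underline I + \underline I' \sim \mathrm{Unif}(S_{n+m})$. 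Applying superadditivity pointwise gives
\[
\alpha_{\underline I, \underline j} + \alpha_{\underline I', \underline j'} \le \alpha_{\underline I + \underline I', \underline j + \underline j'},
\]
and taking expectations, the three marginal conditions turn the two sides into exactly $\beta_{n,\underline j} + \beta_{m,\underline j'}$ and $\beta_{n+m, \underline j + \underline j'}$. The parameters $\underline j, \underline j'$ are inert and are carried along unchanged throughout.

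The only real content is the construction of this coupling, and this is where the main obstacle lies: the \emph{independent} coupling does not work, because the number of ways of writing a given $\underline k \in S_{n+m}$ as $\underline i + \underline i'$ with $\underline i \in S_n$, $\underline i' \in S_m$ is not constant in $\underline k$ (it peaks in the interior), so the sum of two independent uniform vectors is not uniform on $S_{n+m}$; any naive termwise summation of the superadditivity inequalities fails for exactly this reason. I would resolve this with a P\'olya urn on $r+1$ colours. The engine is the elementary one-step identity: from a state $\underline i \in S_N$, adding a ball to colour $\theta$ with probability $(i_\theta + 1)/(N + r + 1)$ pushes $\mathrm{Unif}(S_N)$ forward to $\mathrm{Unif}(S_{N+1})$, which one checks by the direct computation $\sum_{\theta : k_\theta \ge 1} k_\theta = N+1$ together with $\# S_{N+1}/\# S_N = (N+1+r)/(N+1)$. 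Iterating this identity shows that a P\'olya urn started with one ball of each colour and run for $N$ draws has draw-count vector uniform on $S_N$; and the sequence of drawn colours is exchangeable, which is the classical de Finetti property of such urns.

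Running the urn for $n+m$ draws then yields the coupling for free. Let $\underline I$ be the count vector of the first $n$ draws, $\underline I'$ that of the remaining $m$ draws, and $\underline K = \underline I + \underline I'$ the total. Since the first $n$ draws are themselves an urn run from the initial configuration, $\underline I \sim \mathrm{Unif}(S_n)$; likewise $\underline K \sim \mathrm{Unif}(S_{n+m})$. Finally $\underline I' \sim \mathrm{Unif}(S_m)$ by exchangeability, since the block of the last $m$ draws has the same law as the block of the first $m$ draws, whose count vector is uniform on $S_m$. This is precisely the coupling required in the first paragraph, so the proposition follows.

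The step I expect to be the crux is producing a coupling with the \emph{exact} uniform sum-marginal: the peaking of the representation numbers shows this cannot come from independence or from any product weighting, and the P\'olya-urn exchangeability is the clean mechanism that supplies the correct correlated coupling. Once that is in place, everything else—the reduction to the coupling and the passage of the $\underline j$-variables—is routine bookkeeping.
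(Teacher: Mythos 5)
Your proposal is correct, but it takes a genuinely different route from the paper. The paper proves the proposition by induction on $r$: it slices off the last coordinate, writing $\beta_{n,\underline j}$ as a weighted average $\sum_{i}\lambda_{n,i}\,\xi_{n-i,i,\underline j}$ with $\lambda_{n,i}=P_{r-1}(n-i)/P_r(n)$ — in your language, the law of the last coordinate under $\mathrm{Unif}(S_n)$ — where $\xi$ is superadditive by the induction hypothesis; it then constructs, by a separate induction on $(n,m)$, an explicit matrix $M\in M_{n+1,m+1}(\RR_{\geq 0})$ with row sums $\lambda_{n,\cdot}$, column sums $\lambda_{m,\cdot}$ and anti-diagonal sums $\lambda_{n+m,\cdot}$, which is precisely a one-dimensional coupling with the exact sum-marginal. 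So both arguments hinge on the same phenomenon — a correlated coupling is needed because, as you correctly observe, independence does not preserve uniformity of the sum — but the couplings are produced by entirely different mechanisms. You build the full $(r+1)$-dimensional coupling in one stroke via the P\'olya urn, and your verification is sound: the one-step transition identity, the resulting uniformity of the count vector on $S_N$ (the probability of any draw sequence with counts $\underline i$ is $\prod_\theta i_\theta!\,/\bigl((r+1)(r+2)\cdots(r+N)\bigr)$, which is symmetric in the draws and, multiplied by the multinomial count, gives $1/P_r(N)$), and the exchangeability argument for the law of the last $m$ draws are all correct, and the expectation step carrying the inert $\underline j$-variables is routine as you say. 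What your approach buys: no induction on $r$, no recursive matrix construction, and a conceptual explanation of where the coupling comes from; indeed, projecting your urn coupling onto the last coordinate yields the paper's matrix lemma as a corollary. What the paper's approach buys: it is entirely elementary and self-contained, resting on a few binomial identities checked by hand, with no appeal to probabilistic machinery or classical urn facts.
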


\begin{proof}
We will prove the proposition by working by induction on $r$.\\
Notice that the claim is trivial for $r=0$ and $l\in \NN$.\\
Now, let $r > 0$, $l\in \NN$ and $(\alpha_{\underline{i},\underline{j}})_{(\underline{i},\underline{j})\in \NN^{r+1+l}}$ be a superadditive sequence.\\

 We have
$\beta_{n,\underline{j}} =  \displaystyle\frac{1}{P_r(n)} \sum_{|\underline{i}|=n} \alpha_{\underline{i},\underline{j}} = \frac{1}{P_r(n)} \sum\limits_{i_{r}=0}^n \sum_{|\underline{i'}|=n-i_{r}} \alpha_{\underline{i'},i_{r},\underline{j}} = \sum\limits_{i_{r}=0}^n \lambda_{n,i_{r}}\xi_{n-i_{r},i_{r},\underline{j}} $\\
where $\lambda_{n,i} = \displaystyle\frac{P_{r-1}(n-i)}{P_r(n)}$ and where $\xi_{i_0,i_1,\underline{j}} = \displaystyle\frac{1}{P_{r-1}(i_0)}\sum_{|\underline{i'}|=i_0} \alpha_{\underline{i'},i_1,\underline{j}}$ is a superadditive sequence on $\NN^{1+(1+l)}$ .\\

Let $n,m\in \NN$ and $\underline{j},\underline{j}' \in \NN^l$. Let's show that $$ \beta_{n,\underline{j}} + \beta_{m,\underline{j'}} \leq \beta_{n+m,\underline{j}+\underline{j'}} $$
The existence of a matrix $M \in M_{n+1,m+1}(\RR_{\geq 0})$ such that 
$$\displaystyle\sum_{i=0}^n m_{i,i'} = \lambda_{m,i'} \; ,\; \displaystyle\sum_{i'=0}^m m_{i,i'} = \lambda_{n,i} \text{ and } \displaystyle\sum_{i'+i=k} m_{i,i'} = \lambda_{n+m,k}$$
is proven in the lemma below.\\ 
Then we have,
\begin{align*}
\beta_{n,\underline{j}} + \beta_{m,\underline{j'}} &= \sum\limits_{i=0}^n \lambda_{n,i} \xi_{n-i,i,\underline{j}} + \sum\limits_{i'=0}^m \lambda_{m,i'} \xi_{m-i',i',\underline{j'}} \\
&= \sum_{i,i'} m_{i,i'}(\xi_{n-i,i,\underline{j}} + \xi_{m-i',i',\underline{j'}})\\
&\leq \sum_{i,i'} m_{i,i'}\xi_{n+m-i-i',i+i',\underline{j}+\underline{j'}}\\
&\leq \sum_{k=0}^{n+m} \lambda_{n+m,k}\xi_{n+m-k,k,\underline{j}+\underline{j'}}\\
&\leq \beta_{n+m,\underline{j}+\underline{j'}}
\end{align*}
This concludes.
\end{proof}

\begin{lemma}
There exists $M\in M_{n+1,m+1}(\RR_{\geq 0})$ such that $$\displaystyle\sum_{i=0}^n m_{i,i'} = \frac{P_{r-1}(i')}{P_r(m)} \; ,\; \displaystyle\sum_{i'=0}^m m_{i,i'} = \frac{P_{r-1}(i)}{P_r(n)} \text{ and } \displaystyle\sum_{i'+i=k} m_{i,i'} = \frac{P_{r-1}(k)}{P_r(n+m)}$$
with $P_r(n) = 	\displaystyle\binom{n+r}{r}$ and $r > 0$ .
\end{lemma}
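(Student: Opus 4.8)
The plan is to exhibit $M$ not by solving the linear system directly, but by realizing its entries as the joint law of a single combinatorial random variable, so that the three prescribed families of sums become merely its three marginals. First I would record the elementary fact underlying all the data: by the hockey-stick identity $\sum_{s=0}^{N}\binom{s+r-1}{r-1}=\binom{N+r}{r}$, that is $\sum_{s=0}^{N}P_{r-1}(s)=P_r(N)$, each of the three vectors $\big(P_{r-1}(i)/P_r(n)\big)_i$, $\big(P_{r-1}(i')/P_r(m)\big)_{i'}$ and $\big(P_{r-1}(k)/P_r(n+m)\big)_k$ is a \emph{probability distribution}. Moreover all three are the law of one and the same statistic: if $s$ indistinguishable balls and $r\ (>0)$ indistinguishable bars are arranged uniformly at random in a row (there are $\binom{s+r}{r}=P_r(s)$ such arrangements), then the number of balls lying strictly after the first bar equals $u$ in exactly $\binom{u+r-1}{r-1}=P_{r-1}(u)$ arrangements, so that statistic has law $P_{r-1}(\cdot)/P_r(s)$.

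Guided by this, I would consider a \emph{single} uniformly random arrangement in a row of $n$ red balls, $m$ blue balls and $r$ bars, of which there are $\tfrac{(n+m+r)!}{n!\,m!\,r!}$, and let $i$ (resp. $i'$) be the number of red (resp. blue) balls occurring after the first bar; this is well-defined precisely because $r>0$. I then set $m_{i,i'}$ to be the probability of the event $\{\text{red-after}=i,\ \text{blue-after}=i'\}$. Being a probability it is nonnegative, and visibly $0\le i\le n$ and $0\le i'\le m$, so $M\in M_{n+1,m+1}(\RR_{\ge 0})$. The verification of the three constraints rests on one transparency observation: in a uniform arrangement of several symbol-types, the induced arrangement on any chosen sub-collection of types is again uniform, and erasing the other symbols preserves the relative order of those that remain, in particular whether they precede the first bar. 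Applying this to $\{\text{red},\text{bars}\}$ shows $i$ has the law of (balls after first bar) for $n$ balls and $r$ bars, namely $P_{r-1}(i)/P_r(n)$, which is exactly the row sum $\sum_{i'}m_{i,i'}$; symmetrically, erasing the red balls gives the column sums $\sum_i m_{i,i'}=P_{r-1}(i')/P_r(m)$; and erasing the colour distinction (treating all $n+m$ balls as one type) shows $i+i'$ = (balls after first bar) has law $P_{r-1}(k)/P_r(n+m)$, which is the antidiagonal sum $\sum_{i+i'=k}m_{i,i'}$.

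The hard part is entirely conceptual and lies in choosing the right random model rather than in any computation. Any construction in which $i$ and $i'$ are built \emph{independently} will match the row and the column constraints but yields the \emph{wrong} antidiagonal law: the sum of two independent copies convolves the distributions and produces $P_{2r-1}$ in place of $P_{r-1}$, so the three constraints provably cannot be met by a product matrix and genuinely require a coupling. The key idea that dissolves the difficulty is to let the $r$ bars be \emph{shared} by the two colours, so that the antidiagonal statistic $i+i'$ is the colour-blind version of the two coloured statistics; once this is seen, every marginal identity reduces to the elementary transparency property of uniform arrangements, and no hypergeometric or Vandermonde summation ever needs to be evaluated in closed form.
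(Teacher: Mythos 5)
Your proof is correct, and it takes a genuinely different route from the paper's. The paper argues by induction on $(n,m)$: it pads the matrices obtained for the parameters $(n-1,m)$ and $(n,m-1)$ by a zero row (resp.\ column), forms an explicit nonnegative combination of these together with the matrix unit $E_{n,m}$, and verifies the row, column and antidiagonal constraints by reducing them to two elementary binomial identities such as $\frac{P_r(n)}{P_{r-1}(n)}+\frac{P_r(m-1)}{P_{r-1}(m)}=\frac{P_r(n+m)}{P_{r-1}(n+m)}$. You instead exhibit the matrix in one stroke, as the joint law of the pair (red balls after the first bar, blue balls after the first bar) in a uniform arrangement of $n$ red balls, $m$ blue balls and $r$ \emph{shared} bars; the three prescribed families of sums are then exactly the three marginals of this law, and each is identified by the observation that erasing a symbol type (or forgetting the colouring) maps the uniform distribution to the uniform distribution and does not change the position of the remaining balls relative to the first bar, combined with the count $P_{r-1}(u)$ of arrangements of $u$ balls and $r-1$ bars and the hockey-stick identity $\sum_{u\le N}P_{r-1}(u)=P_r(N)$. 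All steps check out: nonnegativity is automatic, $r>0$ makes ``first bar'' well-defined, and the colour-blind projection is precisely what forces the antidiagonal marginal to be $P_{r-1}(k)/P_r(n+m)$ rather than the convolution $P_{2r-1}$ that any product (independent) construction would give --- your remark on this point is a genuine and correct explanation of why a coupling is needed. What each approach buys: the paper's induction is purely algebraic, self-contained, and stylistically matches the surrounding computations, but it is opaque --- the combination is pulled out of a hat and the identities must simply be checked; your construction is non-inductive, conceptually explains \emph{why} such a matrix exists, and even yields closed-form entries ($m_{i,i'}$ proportional to $\binom{n-i+m-i'}{n-i}\cdot\frac{(i+i'+r-1)!}{i!\,i'!\,(r-1)!}$) should one want them, at the mild cost of importing probabilistic language into an otherwise computational appendix.
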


\begin{proof} We proceed by induction.
If $n=0$, the result is clear. Indeed, $m_{0,i'} = \displaystyle\frac{P_{r-1}(i')}{P_r(m)}$ meets the required conditions  as $\displaystyle\sum_{i'}\frac{P_{r-1}(i')}{P_r(m)} = 1$. The same goes if $m=0$.\\

Let $n,m \geq 1$.\\
Let $M_{n,m+1}$ be a matrix satisfying the above conditions and considered as an element of $M_{n+1,m+1}(\RR)$ by completing by a line of $0$. \\
Let $M_{n+1,m}$ be a matrix satisfying the above conditions and considered as an element of $M_{n+1,m+1}(\RR)$ by completing by a column of $0$. \\

We claim that the matrix  $$ M = \frac{P_{r-1}(n+m)}{P_{r}(n+m)}\left( \frac{P_r(n-1)}{P_{r-1}(n)} M_{n-1,m} +  \frac{P_r(m-1)}{P_{r-1}(m)} M_{n,m-1} + E_{n,m}\right) $$
verifies the conditions for $(n,m)$.\\
By symmetry, we only need to verify :
\begin{enumerate}
    \item[-] for the first $n$ lines, that : $\frac{P_{r-1}(n+m)}{P_{r}(n+m)} \left( \frac{P_r(n-1)}{P_{r-1}(n)}\frac{P_{r-1}(i)}{P_r(n-1)} + \frac{P_r(m-1)}{P_{r-1}(m)}\frac{P_{r-1}(i)}{P_r(n)}\right) = \frac{P_{r-1}(i)}{P_r(n)}$.
    \item[-] for the last line, that : $\frac{P_{r-1}(n+m)}{P_{r}(n+m)} \left( \frac{P_r(m-1)}{P_{r-1}(m)}\frac{P_{r-1}(n)}{P_r(n)} + 1 \right) = \frac{P_{r-1}(n)}{P_r(n)}$.
    \item[-] for the first $m+n$ anti-diagonals, that : $\frac{P_{r-1}(n+m)}{P_{r}(n+m)}\left( \frac{P_r(n-1)}{P_{r-1}(n)} \frac{P_{r-1}(k)}{P_r(n+m-1)} +  \frac{P_r(m-1)}{P_{r-1}(m)} \frac{P_{r-1}(k)}{P_r(n+m-1)}\right) = \frac{P_{r-1}(k)}{P_r(n+m)}$.
    \item[-] for the last anti-diagonal : it is trivial.
\end{enumerate}
But, this is clear from the formulas :
 $\frac{P_r(n)}{P_{r-1}(n) } + \frac{P_r(m-1)}{P_{r-1}(m)}  = \frac{P_r(n+m)}{P_{r-1}(n+m) }$ and $\frac{P_r(n-1)}{P_{r-1}(n) } + \frac{P_r(m-1)}{P_{r-1}(m)}  = \frac{P_r(n+m-1)}{P_{r-1}(n+m) }$ .
\end{proof}

As announced, we have the following consequence:

\begin{cor}\label{covergenceparsuadditivite}
Let $(\alpha_{\underline{i}})_{\underline{i}\in \NN^{r+1}}$ be a superadditive sequence. Then the sequence  $$\frac{1}{n^{r+1}}\sum_{|\underline{i}| = n } \alpha_{\underline{i}}$$
converges in $\RR \cup \{+\infty\}$, when $n\to \infty$.
\end{cor}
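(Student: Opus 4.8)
The plan is to read off the corollary as the special case $l=0$ of Proposition \ref{superadditive}, followed by an application of Fekete's superadditivity lemma and a comparison of $P_r(n)$ with $n^r$.

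First I would specialize Proposition \ref{superadditive} to $l = 0$. Applied to the given superadditive sequence $(\alpha_{\underline i})_{\underline i \in \NN^{r+1}}$, it produces the one-parameter sequence
$$\beta_n = \frac{1}{P_r(n)}\sum_{|\underline i| = n}\alpha_{\underline i},$$
and asserts that it is superadditive on $\NN$, i.e. $\beta_n + \beta_m \le \beta_{n+m}$ for all $n,m \in \NN$. Note that each $\beta_n$ is a genuine real number, since for fixed $n$ the sum ranges over the finite set $\{\underline i \in \NN^{r+1} \mid |\underline i| = n\}$.

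Next I would invoke Fekete's lemma in its superadditive form: for any real-valued sequence $(\beta_n)_{n\ge 1}$ satisfying $\beta_n + \beta_m \le \beta_{n+m}$, the ratio $\beta_n/n$ converges, as $n\to\infty$, to $\sup_{n\ge 1}\beta_n/n \in \RR \cup \{+\infty\}$. I denote this limit by $L$.

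Finally I would transfer this back to the normalization by $n^{r+1}$. Since $\sum_{|\underline i|=n}\alpha_{\underline i} = P_r(n)\,\beta_n$, we may write
$$\frac{1}{n^{r+1}}\sum_{|\underline i| = n}\alpha_{\underline i} = \frac{P_r(n)}{n^r}\cdot\frac{\beta_n}{n}.$$
The key elementary input is the asymptotic $P_r(n) = \binom{n+r}{r} = \frac{n^r}{r!}\bigl(1 + o(1)\bigr)$, so that $P_r(n)/n^r \to 1/r! > 0$. The only point requiring a little care is the behavior inside $\RR\cup\{+\infty\}$: if $L$ is finite the right-hand side converges to $L/r!$, while if $L = +\infty$ the strict positivity of the factor $P_r(n)/n^r$ forces the product to tend to $+\infty$. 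In either case the sequence converges in $\RR\cup\{+\infty\}$, which is exactly the assertion. I do not expect any genuine obstacle here: all the combinatorial work has already been carried out in Proposition \ref{superadditive}, and the remaining ingredients are the classical Fekete lemma and the asymptotics of a binomial coefficient.
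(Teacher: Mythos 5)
Your proof is correct and follows essentially the same route as the paper: specialize Proposition \ref{superadditive} to obtain superadditivity of $\beta_n = \frac{1}{P_r(n)}\sum_{|\underline{i}|=n}\alpha_{\underline{i}}$, then apply Fekete's superadditivity lemma. The only difference is that you make explicit the final renormalization step via $P_r(n)/n^r \to 1/r!$, which the paper leaves implicit in passing from $\frac{1}{nP_r(n)}\sum_{|\underline{i}|=n}\alpha_{\underline{i}}$ to the statement with $n^{r+1}$.
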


\begin{proof}
By proposition \ref{superadditive}, $\left( \frac{1}{P_r(n)}\sum_{|\underline{i}| = n } \alpha_{\underline{i}}\right)$ is a superadditive sequence. Then by Fekete's superadditivity lemma, $\left( \frac{1}{nP_r(n)}\sum_{|\underline{i}| = n } \alpha_{\underline{i}}\right)$  converges in $\RR \cup \{+\infty\}$, when $n\to \infty$.
\end{proof}

\subsubsection{Proof of the case of $\PP^l_\ZZ$ }

\begin{prop}\label{CasDePn} Assume that the line bundle $\mcO_\PP(1)$ over $\PP^l_\ZZ = \Proj \ZZ[x_0, \dots, x_l]$ is endowed with a structure of seminormed line bundle such that the associated analytic tube is stable by the action of $U(1)^{l+1}$ by multiplication on $x_0, \dots, x_l$. \\
Then, we have $$\underline{c}_{ l+1}(\PP^l_\ZZ, \overline{\mcO_\PP(1)}, \mcO_\PP) = \overline{c}_{ l+1}(\PP^l_\ZZ, \overline{\mcO_\PP(1)},\mcO_\PP) \in \RR\cup\{+\infty\}$$

\end{prop}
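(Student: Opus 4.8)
The plan is to compute both invariants explicitly by exploiting the torus symmetry, thereby reducing the statement to the superadditive convergence already established in Corollary \ref{covergenceparsuadditivite}. Recall that $H^0(\PP^l_\ZZ, \mcO_\PP(n))$ is the free $\ZZ$-module of homogeneous polynomials of degree $n$ in $x_0, \dots, x_l$, with monomial basis $(x^{\underline{i}})_{|\underline{i}|=n}$, $\underline{i} \in \NN^{l+1}$, and that the number of such monomials is $\binom{n+l}{l} = P_l(n)$. By their definition recalled in the appendix, $\underline{c}_{l+1}$ and $\overline{c}_{l+1}$ are the lower and upper asymptotics as $n \to \infty$ of $\frac{(l+1)!}{n^{l+1}}\widehat{\chi}$ of these graded pieces, computed with norms drawn from the family defining the bornology attached to the tube; my aim is to show that this normalized arithmetic Euler characteristic has a genuine limit.

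First I would use the hypothesis that the tube, and hence the whole defining family of seminorms on $\bigoplus_n H^0(\PP^l_\ZZ, \mcO_\PP(n))$, is invariant under $U(1)^{l+1}$ acting coordinate-wise. Two distinct monomials $x^{\underline{i}}, x^{\underline{i'}}$ of the same degree transform under distinct characters of $U(1)^{l+1}$, so averaging any invariant Hermitian inner product over the compact group forces $\langle x^{\underline{i}}, x^{\underline{i'}}\rangle = 0$. Consequently, for the $L^2$-type norm computing the bornology (Proposition \ref{reducedcasenomrlp}), the monomial basis is orthogonal, and the Euclidean lattice $H^0(\PP^l_\ZZ, \mcO_\PP(n))$ splits as an orthogonal direct sum $\bigoplus_{|\underline{i}|=n} \ZZ\, x^{\underline{i}}$ of rank-one pieces. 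By additivity of $\widehat{\chi}$ over orthogonal direct sums, and since the rank-one arithmetic Euler characteristic differs from the rank-one arithmetic degree by a universal constant, I obtain
$$\widehat{\chi}\big(H^0(\PP^l_\ZZ, \mcO_\PP(n))\big) = \sum_{|\underline{i}|=n} \alpha_{\underline{i}} + O(n^l), \qquad \alpha_{\underline{i}} := -\log \| x^{\underline{i}}\|,$$
so everything reduces to understanding the $\NN^{l+1}$-indexed sequence $(\alpha_{\underline{i}})$, the $O(n^l)$ error being negligible after dividing by $n^{l+1}$.

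Next I would establish that $(\alpha_{\underline{i}})$ is superadditive for the additive monoid law on $\NN^{l+1}$. This is where the algebra structure enters: the multiplication sends $x^{\underline{i}}\otimes x^{\underline{i'}}$ to $x^{\underline{i}+\underline{i'}}$, and the $L^\infty$-norm is submultiplicative, $\|x^{\underline{i}+\underline{i'}}\|_\infty \leq \|x^{\underline{i}}\|_\infty\,\|x^{\underline{i'}}\|_\infty$, since $|x^{\underline{i}+\underline{i'}}(p)| = |x^{\underline{i}}(p)|\,|x^{\underline{i'}}(p)|$ pointwise. Passing to $-\log$ gives $\alpha_{\underline{i}+\underline{i'}} \geq \alpha_{\underline{i}} + \alpha_{\underline{i'}}$. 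With superadditivity in hand, Corollary \ref{covergenceparsuadditivite}, applied with $r = l$ (so that $r+1 = l+1$), shows that $\frac{1}{n^{l+1}}\sum_{|\underline{i}|=n}\alpha_{\underline{i}}$ converges in $\RR \cup \{+\infty\}$. Since $\overline{c}_{l+1}$ and $\underline{c}_{l+1}$ are sandwiched between the upper and lower versions of exactly this normalized sum, the existence of the limit forces $\underline{c}_{l+1} = \overline{c}_{l+1} \in \RR \cup \{+\infty\}$.

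The main obstacle I anticipate is the compatibility between the two norms used above: additivity of $\widehat{\chi}$ over the orthogonal splitting is natural for an $L^2$ norm, whereas submultiplicativity is an $L^\infty$ phenomenon (the $L^2$-norm itself is \emph{not} submultiplicative, as Cauchy--Schwarz only bounds it in terms of $L^4$). These define the same bornology, but only up to the factors $Ce^{n\epsilon}$ of Proposition \ref{reducedcaseBornology}. I would control this by noting that replacing $\|\cdot\|_\infty$ by $\|\cdot\|_{L^2}$ on the degree-$n$ piece alters each $\alpha_{\underline{i}}$ by $O(n\epsilon)$; summed over the $\sim n^l/l!$ monomials and divided by $n^{l+1}$, this contributes only $O(\epsilon)$ to the normalized sum, which vanishes as $\epsilon \to 0$. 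This is precisely why the bornological invariants $\overline{c}$ and $\underline{c}$, rather than any single metric quantity, are the right objects: they absorb the $e^{n\epsilon}$ discrepancy, so the superadditive $L^\infty$ sum and the additive $L^2$ sum share the same asymptotics. A secondary point to verify is that the orthogonality argument applies uniformly across the entire defining family of seminorms and not merely to one representative, which follows from the $U(1)^{l+1}$-invariance of the tube itself; this also accounts for why the uniformly definite hypothesis is unnecessary here, the limit being allowed to take the value $+\infty$.
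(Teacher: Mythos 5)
Your argument is essentially the paper's own proof: $U(1)^{l+1}$-invariance makes the monomials an orthogonal basis for a Hilbertian family defining the bornology, so $\widehat{\chi}$ becomes the sum of $-\log$ of the monomial norms; submultiplicativity of sup-norms on neighborhoods of the tube gives superadditivity of that sum; Corollary \ref{covergenceparsuadditivite} then yields the limit; and a comparison of defining families transfers the convergence to $\overline{c}_{l+1}$ and $\underline{c}_{l+1}$. The one repair needed is your final comparison step: Proposition \ref{reducedcaseBornology} presupposes a Hermitian metric on the line bundle (and $L^p$-norms computed on the base), so it does not apply to an arbitrary upper semicontinuous seminorm, but you do not need it --- your invariant $L^2$-norms and the sup-norms both live on relatively compact neighborhoods of the tube in the (smooth) total space and define the same bornology by Proposition \ref{reducedcasenomrlp}, hence dominate each other up to constants \emph{uniform in $n$}; this is exactly the comparison the paper invokes, and it renders your $Ce^{n\epsilon}$ bookkeeping unnecessary.
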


One of the main point of the proof is the compatibility of the bornological structure and the algebra structure as stressed in \cite{houzel}.
\begin{proof}
By \cite{Bost-Charles}, the bornology on $H^0(\VV_{\PP^l_\ZZ}(\overline{\mcO_\PP(1)}), \mcO_\VV)$ is defined by a family of Hilbertian seminorms $(||\cdot||_i)_{i\in \NN}$ invariant by complex conjugation, orthogonal for the actions of $\GG_m$. That is, the monomes $(\underline{x}^{\underline{m}})_{\underline{m} \in \NN^{l+1}}$ form orthogonal basis for the seminorms $||\cdot||_i$. Hence $$\widehat{\chi}(H^0(\PP^l_\ZZ,\mcO_\PP(n)),||\cdot||_i) = \sum\limits_{|\underline{m}|=n} -\log ||\underline{x}^{\underline{m}}||_i$$
Now, choose a decreasing basis of relatively compact neighborhoods $(U_j)_{j\in \NN}$ of $T \subset \VV_{\PP^l_\ZZ}(\mcO_\PP(1))$, by \cite{Bost-Charles}, the decreasing family of norms $(||\cdot||_{\infty, U_j})_{j\in \NN}$ defines the bornology on $H^0(\VV_{\PP^l_\ZZ}(\overline{\mcO_\PP(1)}), \mcO_\VV)$.
Furthermore, those norms are algebra norms. In particular, they verify that $||x^{\underline{m}+ \underline{m'}}||_{\infty, U_j} \leq ||x^{\underline{m}}||_{\infty, U_j}|| x^{\underline{m'}}||_{\infty, U_j}$.\\
By corollary \ref{covergenceparsuadditivite}, this ensures that $\frac{1}{n^{l+1}}\sum_{|\underline{m}|=n}-\log||\underline{x}^{\underline{m}}||_{\infty, U_j}$ converges.\\
By using the comparison between two families of norms defining the same bornology, we get that 
$$\underline{c}_{l+1}(\PP^l_{\ZZ}, \overline{\mcO_\PP(1)}, \mcO_\PP) =\overline{c}_{l+1}(\PP^l_{\ZZ}, \overline{\mcO_\PP(1)}, \mcO_\PP) = \lim\limits_{j\to \infty}\lim\limits_{n\to \infty} \frac{(l+1)!}{n^{l+1}}\sum_{|\underline{m}|=n}-log||\underline{x}^{\underline{m}}||_{\infty, U_j}$$
\end{proof}

Using the deformation to the projective completion of the cone, we immediately have the following corollary: 
\begin{cor}
 Assume that the line bundle $\mcO_\PP(1)$ over $\PP^l_\ZZ$ is endowed with a structure of uniformly definite seminormed line bundle.\\
Then, we have $$\underline{c}_{ l+1}(\PP^l_\ZZ, \overline{\mcO_\PP(1)}, \mcO_\PP) = \overline{c}_{ l+1}(\PP^l_\ZZ, \overline{\mcO_\PP(1)},\mcO_\PP) \in \RR\cup \{+\infty\}$$
\end{cor}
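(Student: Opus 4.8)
The plan is to deduce this from Proposition \ref{CasDePn} by transporting its conclusion along the deformation to the projective completion of the cone. The starting observation is that, for $\PP^l_\ZZ$ together with $\mcO_\PP(1)$, the closed immersion into projective space induced by the sections of $\mcO_\PP(1)$ is the identity $\PP^l_\ZZ \hookrightarrow \PP^l_\ZZ$. Since $\PP^l_\ZZ = \Proj \ZZ[x_0,\dots,x_l]$ is already stable under the action of $\GG_m$ by multiplication on each coordinate $x_i$, the deformation does not modify the underlying scheme at all: it acts only on the analytic tube $T$, through the transverse $\GG_m$-actions in each coordinate direction, which are combined into a single degeneration by corollary \ref{CombinaisonActionGm}.

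Next I would identify the special fiber of this deformation. Under the combined transverse torus action the tube $T$ degenerates to a limiting tube $T_0$, and the key point is that $T_0$ is invariant under $U(1)^{l+1}$ acting by multiplication on $x_0,\dots,x_l$: this is exactly the effect of degenerating along the full coordinate torus, which symmetrizes $T$ into a multiplicatively invariant compact. The uniform definiteness hypothesis enters here. By the characterization of uniform definiteness as the condition that $T$ contain a neighborhood of the zero section $0_X$, and since the $U(1)^{l+1}$-symmetrization of such a neighborhood is again a neighborhood of $0_X$, this property is preserved in the limit. Hence the seminormed line bundle $\overline{\mcO_\PP(1)}'$ with tube $T_0$ on the special fiber is again uniformly definite, so the conservation theorem of \cite{NI2022} applies throughout the family.

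With the special fiber in hand, I would invoke the invariance by deformation of the arithmetic Hilbert invariants (corollary \ref{invariancepardeformation}): it gives the equalities $\underline{c}_{l+1}(\PP^l_\ZZ, \overline{\mcO_\PP(1)}, \mcO_\PP) = \underline{c}_{l+1}(\PP^l_\ZZ, \overline{\mcO_\PP(1)}', \mcO_\PP)$ and likewise for $\overline{c}_{l+1}$. Since $T_0$ is $U(1)^{l+1}$-invariant, Proposition \ref{CasDePn} yields $\underline{c}_{l+1}(\PP^l_\ZZ, \overline{\mcO_\PP(1)}', \mcO_\PP) = \overline{c}_{l+1}(\PP^l_\ZZ, \overline{\mcO_\PP(1)}', \mcO_\PP) \in \RR \cup \{+\infty\}$, and transporting this equality back along the deformation proves the corollary.

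The step I expect to require the most care is the identification and control of the special fiber: one must check that the transverse $\GG_m$-actions really degenerate an \emph{arbitrary} uniformly definite tube to a $U(1)^{l+1}$-invariant one, and that the hypotheses of the conservation theorem hold along the whole family and not merely at the endpoints. This is precisely where the uniformly definite assumption is essential—it is what keeps the deformation within the class of seminormed line bundles for which the invariance theorem is available. Removing it is the content of the stronger Theorem \ref{CasSupplementairedePn}, whose proof cannot proceed by this direct appeal and instead requires the more explicit analysis of the action of the deformation on the tube.
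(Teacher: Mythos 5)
Your proposal follows essentially the same route as the paper: use the deformation to the projective completion of the cone attached to a coordinate hyperplane, note that it leaves $\PP^l_\ZZ$ and $\mcO_\PP(1)$ unchanged while the analytic tube gains $U(1)$-invariance, transport the invariants through the deformation using the conservation theorem (which is where uniform definiteness enters), and conclude with Proposition \ref{CasDePn}. Two slips should be corrected, though neither is fatal. First, the machinery does not combine the coordinate directions into a single degeneration: each deformation is attached to one hyperplane section $\mathrm{div}(x_i)$ and adds exactly one transverse $U(1)$-invariance, so one must iterate the construction $l+1$ times, with corollary \ref{CombinaisonActionGmU1} guaranteeing that the invariances already acquired are preserved at each step (corollary \ref{CombinaisonActionGm} is the compatibility statement making the iteration possible, not a device for degenerating along the full torus at once). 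Second, the result you need to transport the invariants is proposition \ref{invarianceArithmeticHilbert} (conservation of the arithmetic Hilbert invariants, valid for reduced schemes and uniformly definite seminorms), not corollary \ref{invariancepardeformation}, which only concerns geometric Hilbert functions over a field; note also that proposition \ref{invarianceArithmeticHilbert} directly asserts that the tube on the fiber at infinity is again uniformly definite, so your heuristic symmetrization argument for preserving that property is not needed.
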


\begin{proof}
Let $\PP^l_\ZZ = \Proj \ZZ[x_0,\dots,x_l]$. Then the fiber above $\infty$ of the deformation to the projective completion of the cone associated to $(\PP^l_\ZZ, \mathrm{div}(x_0),\overline{\mcO_\PP(1)})$ is $(\PP^l_\ZZ = \Proj \ZZ[x'_0,x_1,\dots,x_l],\mcO_\PP(1))$. Hence, it deforms $(\PP^l_\ZZ,\mcO_\PP(1))$ into $(\PP^l_\ZZ,\mcO_\PP(1))$ but the analytic tube associated to the deformation is stable by the action of the natural transverse action of $U(1)$. At the fiber above $\infty$, the natural transverse action is the multiplication on $x'_0$, by proposition \ref{CombinaisonActionGm}. Hence, by the conservation by deformation of the arithmetic Hilbert invariants (proposition \ref{invarianceArithmeticHilbert}), we can assume that the analytic tube of $\overline{\mcO_\PP(1)}$ is stable by the action of $U(1)$ by multiplication on the first variable.
By doing this operation repetively, using corollary \ref{CombinaisonActionGmU1}, we can assume that the analytic tube associated to $\overline{\mcO_\PP(1)}$ is stable by the action of $U(1)^{l+1}$ by multiplication on $x_0, \dots, x_l$.\\
Then, proposition \ref{CasDePn} concludes.
\end{proof}

It is possible to give an extension of this theorem in the non uniformly definite case.
 The deformation to the projective completion of the cone deforms $\PP^l_\ZZ$ into $\PP^l_\ZZ$ and adds an action of $U(1)$ on the analytic tube of $\overline{\mcO_\PP(1)}$. However, the conservation of the arithmetic Hilbert invariants given in proposition \ref{invarianceArithmeticHilbert} cannot be used when we are not considering uniformly definite normed line bundle. Hence to prove theorem \ref{CasSupplementairedePn}, we give an explicit argument which can be understood as an explicit deformation argument.\\

\begin{cor}
 Assume that the line bundle $\mcO_\PP(1)$ over $\PP^l_\ZZ$ is endowed with a structure of seminormed line bundle.\\
Then, we have $$\underline{c}_{ l+1}(\PP^l_\ZZ, \overline{\mcO_\PP(1)}, \mcO_\PP) = \overline{c}_{ l+1}(\PP^l_\ZZ, \overline{\mcO_\PP(1)},\mcO_\PP) \in \RR\cup \{+\infty\}$$
\end{cor}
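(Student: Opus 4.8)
The plan is to reduce the statement to Proposition \ref{CasDePn} by replacing the original tube $T$ by a $U(1)^{l+1}$-invariant one, but since $\mcO_\PP(1)$ need not be uniformly definite here we cannot appeal to the conservation theorem (Proposition \ref{invarianceArithmeticHilbert}) as in the previous corollary; instead I would carry out the deformation by hand at the level of the sup-norms attached to $T$. Since $\PP^l_\ZZ$ is reduced and $\mcO_\PP(1)$ is ample, Proposition \ref{passageAlenveloppeholo} first lets me replace $T$ by its holomorphically convex hull without altering the bornology, hence without altering $\underline c_{l+1}$ and $\overline c_{l+1}$. The key point is that the monomial sup-norms $||\underline{x}^{\underline{m}}||_{\infty,U}$ are $U(1)^{l+1}$-invariant functions of the tube, so they agree for $T$ and for its invariant hull $T^{+}=\bigcup_{\theta\in U(1)^{l+1}}\theta\cdot T$. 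The hull $T^{+}$ is a genuine $U(1)^{l+1}$-stable tube to which Proposition \ref{CasDePn} applies, and it computes $\underline c_{l+1}(T^{+})=\overline c_{l+1}(T^{+})=L$, where $L=\lim_{j}\lim_{n}\frac{(l+1)!}{n^{l+1}}\sum_{|\underline{m}|=n}\bigl(-\log||\underline{x}^{\underline{m}}||_{\infty,U_j}\bigr)$ for a decreasing basis of neighborhoods $(U_j)_j$ of $T$. Because $T\subseteq T^{+}$, the monotonicity of the invariants under inclusion of tubes (smaller tube, finer bornology, larger invariant) gives at once $\underline c_{l+1}(T)\ge L$ and $\overline c_{l+1}(T)\ge L$; equivalently, this lower bound is Hadamard's inequality $\widehat\chi\ge\sum_{|\underline{m}|=n}(-\log||\underline{x}^{\underline{m}}||_i)$ in the monomial $\ZZ$-basis combined with the $n$-uniform comparison $||\cdot||_i\le C||\cdot||_{\infty,U_j}$ furnished by the bornology, whose total cost is $(\log C)\cdot\#\{\underline{m}:|\underline{m}|=n\}=O(n^{l})=o(n^{l+1})$.

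The reverse inequality $\overline c_{l+1}(T)\le L$ is the substantive point, and it amounts to an upper bound on $\widehat\chi(H^0(\PP^l_\ZZ,\mcO_\PP(n)),||\cdot||_i)$, i.e.\ to a lower bound on the covolume, which is exactly what orthogonality of the monomials would supply for free in the invariant case. I would obtain it from Cauchy coefficient estimates: a neighborhood $U$ of $T$ contains, around each of its points, a polydisc $D$, and $|c_{\underline{m}}|\cdot\rho^{\underline{m}}\le\sup_{D}|f|\le||f||_{\infty,U}$ bounds each monomial coefficient of a section $f=\sum_{\underline{m}}c_{\underline{m}}\underline{x}^{\underline{m}}$ without any invariance hypothesis. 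The weights $\rho^{\underline{m}}$ are themselves the monomial sup-norms of an invariant (Reinhardt) polydisc-tube $T^{-}\subseteq T$, so this furnishes a $U(1)^{l+1}$-stable tube sitting inside $T$; Proposition \ref{CasDePn} applies to $T^{-}$ and yields an invariant of the same monomial-sum shape. Exhausting $T$ from inside by such invariant polydisc-tubes $T^{-}$ and from outside by the hull $T^{+}$, and using that the $\frac{1}{n^{l+1}}\sum_{|\underline{m}|=n}$ averages of the monomial sup-norms converge by superadditivity (Corollary \ref{covergenceparsuadditivite}, via Proposition \ref{superadditive}), I would squeeze $\overline c_{l+1}(T)$ and $\underline c_{l+1}(T)$ between the common value $L$ and the inner invariants $\overline c_{l+1}(T^{-})$, which decrease to $L$ as the monomial sup-norms of the approximating tubes increase to those of $T$.

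The main obstacle is precisely this upper bound: without the uniformly definite hypothesis there is no a priori control on the degeneracy of the monomial correlation matrix, so the whole weight of the argument lies in showing that the inner invariant approximations $T^{-}$ converge to $L$ together with the outer hull $T^{+}$, i.e.\ that the gap between $T$ and its invariant approximants contributes only $o(n^{l+1})$ to $\widehat\chi$. The two facts that make this possible are that the number of degree-$n$ monomials is only $\binom{n+l}{l}=O(n^{l})$, so that any bounded per-monomial discrepancy is negligible at the scale $n^{l+1}$, and that all the comparison constants coming from the bornology are uniform in $n$ by construction of $H^0(\VV_{\PP^l_\ZZ}(\mcO_\PP(1)),\mcO_\VV)$. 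Once the squeeze is established, Fekete's lemma through Corollary \ref{covergenceparsuadditivite} and Proposition \ref{CasDePn} close the argument exactly as in the uniformly definite corollary, giving $\underline c_{l+1}(\PP^l_\ZZ,\overline{\mcO_\PP(1)},\mcO_\PP)=\overline c_{l+1}(\PP^l_\ZZ,\overline{\mcO_\PP(1)},\mcO_\PP)\in\RR\cup\{+\infty\}$, which is the assertion.
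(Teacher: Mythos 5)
Your lower bound ($\underline{c}_{l+1}\ge L$ via monotonicity of tubes, or equivalently Hadamard's inequality) is fine, but the second half of your argument fails, and the equality it is designed to prove is false. Your squeeze would show that the arithmetic Hilbert invariants of $T$ coincide with those of its $U(1)^{l+1}$-invariant hull $T^{+}$, i.e.\ that they are determined by the monomial sup-norms alone. They are not. Take $l=1$ and the upper semicontinuous seminorm on $\mcO_\PP(1)$ that agrees with the Fubini--Study metric at the points $[1:t]$, $t\in[-1,1]$, of $\PP^1(\CC)$ and is zero elsewhere; its tube $T$ is the circled compact lying over the real interval (together with the zero section), while $T^{+}$ is the analogous tube over the disc $\{[1:u]\,:\,|u|\le 1\}$. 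Since $|\underline{x}^{\underline{m}}|$ depends only on the moduli of the coordinates, $T$ and $T^{+}$ have identical monomial sup-norms, so your $L$ equals $\overline{c}_{2}(T^{+})$. But the invariant of $T$ is governed by sup-norms of integer polynomials on neighborhoods of the \emph{interval}, not of the disc, and these differ at the leading order $n^{l+1}$: the homogenized Chebyshev polynomials $x_0^{\,n}C_n(x_1/x_0)\in\ZZ[x_0,x_1]_n$ have norm at most $1$ on $T$ but norm growing like $c^{\,n}$ with $c>1$ on $T^{+}$, and by the capacity/transfinite-diameter theory underlying \cite{rumelyvauvarley} (the transfinite diameter of $[-1,1]$ is $1/2$, that of the closed unit disc is $1$) this shifts $\widehat{\chi}_n$ by a positive multiple of $n^{2}$, so that $\overline{c}_{2}(T)>\overline{c}_{2}(T^{+})=L$. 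Thus your claim that ``the gap between $T$ and its invariant approximants contributes only $o(n^{l+1})$ to $\widehat{\chi}$'' is exactly what is false. The inner half of the squeeze collapses as well: in this example the only $U(1)^{2}$-stable subsets of $T$ lie over the fixed points, so there is no invariant tube $T^{-}\subseteq T$ whose monomial sup-norms approach those of $T$; your Cauchy-estimate polydiscs centered at the cone point must have one radius tending to $0$ as the neighborhoods shrink to $T$, so their invariants tend to $+\infty$, not to $L$.

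The paper's proof takes a different, purely algebraic route and never identifies the limit with $L$. It replaces, one variable at a time, the bornological algebra $\ZZ[x_0,\dots,x_l]$ by the associated graded algebra $\bigoplus_k I^k/I^{k+1}$ for $I=x_m\ZZ[x_0,\dots,x_l]$, endowed with the \emph{subquotient} seminorms. The exact additivity of $\widehat{\chi}$ along filtrations (lemma \ref{additivityclassical}) guarantees that each $\widehat{\chi}(M_n,||\cdot||_i)$, hence each invariant, is preserved by this operation; after iterating over all variables, the Hilbertian seminorms become orthogonal on the monomials while the subquotient sup-norms remain submultiplicative, so corollary \ref{covergenceparsuadditivite} applies exactly as in proposition \ref{CasDePn}. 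The resulting limit is a monomial sum of the subquotient norms, which are Chebyshev-type distances to subspaces and are in general strictly smaller than the plain sup-norms $||\underline{x}^{\underline{m}}||_{\infty,U_j}$; this discrepancy is precisely what your argument overlooks. (Your preliminary step of replacing $T$ by its holomorphically convex hull via proposition \ref{passageAlenveloppeholo} is legitimate, but it does not make $T$ invariant: in the example above the interval tube is already polynomially convex.)
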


\begin{proof}
By \cite{Bost-Charles}, the bornology on $H^0(\VV_{\PP^l_\ZZ}(\overline{\mcO_\PP(1)}), \mcO_\VV)$ is defined by a family of Hilbertian seminorms $(||\cdot||_i)_{i\in \NN}$.\\
Choose a decreasing basis of relatively compact neighborhoods $(U_j)_{j\in \NN}$ of $T \subset \VV_{\PP^l_\ZZ}(\mcO_\PP(1))$, by \cite{Bost-Charles}, the decreasing family of norms $(||\cdot||_j')_{j\in \NN} =(||\cdot||_{\infty, U_j})_{j \in \NN}$ defines the bornology on $H^0(\VV_{\PP^l_\ZZ}(\overline{\mcO_\PP(1)}), \mcO_\VV)$.\\
Now, take $I =x_{m} H^0(\VV_{\PP^l_\ZZ}(\mcO_\PP(1)), \mcO_\VV) = x_{m} \ZZ[x_0,\dots, x_l] $, and consider the bornology on $\bigoplus I^k/I^{k+1}$ given by the family of subquotient seminorms $(||\cdot||_{i,sq})_{i\in \NN}$. This bornology is also given by the family $(||\cdot||'_{j,sq})_{j\in \NN}$.\\
Furthermore, $\bigoplus I^k/I^{k+1}$ is isomorphic to the ring of polynomial in $l+1$ variables.\\
By additivity of $\widehat{\chi}$ given in lemma \ref{additivityclassical}, and by doing the previous operation repetitively, we can assume that we have the ring $\ZZ[x_0, \dots, x_l]$ endowed with a bornology given by two families of seminorms, $(||\cdot||_i)_{i\in \NN}$ a family of Hilbertian seminorms orthogonals on the monomes $\underline{x}^{\underline{k}}$ and $(||\cdot||'_j)_{j\in \NN}$ a family of seminorms
which verifies that $||\underline{x}^{\underline{k+k'}}||_j' \leq ||\underline{x}^{\underline{k}}||_j' ||\underline{x}^{\underline{k'}}||_j'$.\\
Then, we can conclude as in the proof of the proposition \ref{CasDePn}.
\end{proof}

\subsubsection{Finiteness of the arithmetic Hilbert invariants.} The uniformly definite condition ensures an upper bound on the arithmetic Hilbert invariants:

\begin{prop}\label{minorationlambda} Choose a continuous Riemannian metric on $\PP^l_\CC$. Assume that $\mcO_\PP(1)$ is endowed with a uniformly definite metric. The first mininum $\lambda_1(H^0(\PP^l_\ZZ, \mcO_\PP(n)), ||\cdot||_{L^2})$ verifies that there is $c>0$ such that for all $n\in 0$, we have $$\lambda_1(H^0(\PP^l_\ZZ, \mcO_\PP(n)), ||\cdot||_{L^2})\geq c^n$$
\end{prop}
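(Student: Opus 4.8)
The plan is to reduce the statement to the Fubini--Study metric and measure, where the sections of $\mcO_\PP(n)$ carry an explicit orthogonal basis, and then to exploit the integrality of the lattice vectors. First I would invoke the uniform definiteness hypothesis (Definition \ref{uniformlydef}): there is a continuous norm $|\cdot|'$ on $\mcO_\PP(1)$ with $|\cdot| \geq |\cdot|'$, so that $||s||_{L^2} \geq ||s||'_{L^2}$ for every section $s$ of $\mcO_\PP(n)$, where $||\cdot||'_{L^2}$ is computed with $|\cdot|'^{\otimes n}$. Since any two continuous metrics on a line bundle over the compact manifold $\PP^l(\CC)$ are comparable, there is $A>0$ with $|\cdot|' \geq A^{-1}|\cdot|_{FS}$, hence $|\cdot|'^{\otimes n} \geq A^{-n}|\cdot|_{FS}^{\otimes n}$ in degree $n$. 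Likewise the fixed Riemannian measure $d\mu$ dominates a positive multiple $\kappa\, d\mu_{FS}$ of the Fubini--Study measure. Combining these, $||s||_{L^2}^2 \geq A^{-2n}\kappa\, ||s||_{FS}^2$, where $||\cdot||_{FS}$ denotes the $L^2$-norm for the Fubini--Study metric and measure, so it suffices to bound $||s||_{FS}$ from below.

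In the Fubini--Study setting the monomials $(\underline{x}^{\underline{m}})_{|\underline{m}|=n}$ form an orthogonal basis of $H^0(\PP^l_\ZZ, \mcO_\PP(n))\otimes_\ZZ\CC$. Writing a nonzero integral section as $s = \sum_{|\underline{m}|=n} a_{\underline{m}}\,\underline{x}^{\underline{m}}$ with $a_{\underline{m}} \in \ZZ$ not all zero, orthogonality gives $||s||_{FS}^2 = \sum_{\underline{m}} |a_{\underline{m}}|^2\, ||\underline{x}^{\underline{m}}||_{FS}^2$. Because the $a_{\underline{m}}$ are integers, $\sum_{\underline{m}} |a_{\underline{m}}|^2 \geq 1$, and I obtain the clean bound $||s||_{FS}^2 \geq \min_{|\underline{m}|=n} ||\underline{x}^{\underline{m}}||_{FS}^2$, reducing everything to a single explicit estimate that is uniform over the lattice.

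It remains to estimate $\min_{|\underline{m}|=n}||\underline{x}^{\underline{m}}||_{FS}^2$, and this is where the real work lies. Using the standard integral $||\underline{x}^{\underline{m}}||_{FS}^2 = c_l\,\frac{m_0!\cdots m_l!}{(n+l)!}$ with $c_l$ a normalization constant, the multinomial bound $\binom{n}{\underline{m}} \leq (l+1)^n$ yields $m_0!\cdots m_l! \geq n!\,(l+1)^{-n}$, while $\frac{n!}{(n+l)!} \geq (n+l)^{-l}$ is only polynomially small; hence $\min_{|\underline{m}|=n}||\underline{x}^{\underline{m}}||_{FS}^2 \geq c_l\,(l+1)^{-n}(n+l)^{-l}$. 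The main obstacle is precisely controlling this minimum: one must check that the balanced multi-indices, which minimize the product of factorials, still decay only exponentially, and that the polynomial factor $(n+l)^{-l}$ can be absorbed, since for any $\delta>0$ it is bounded below by $e^{-\delta n}$ for $n$ large. Assembling the pieces gives $\lambda_1(H^0(\PP^l_\ZZ, \mcO_\PP(n)), ||\cdot||_{L^2}) \geq \big(A^{-1}(l+1)^{-1/2}e^{-\delta}\big)^{n}$ for $n$ large; the finitely many remaining values of $n$ have strictly positive first minimum, so shrinking the base constant produces a uniform $c>0$ with $\lambda_1 \geq c^n$ for all $n$.
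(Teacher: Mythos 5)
Your proof is correct, but it takes a genuinely different analytic route from the paper's. Both arguments start identically: uniform definiteness lets you replace the given metric by the Fubini--Study one at the cost of a factor $A^{-n}$ that is absorbed into $c^n$, and both rest on the same arithmetic input, namely that a nonzero integral section has $\sum_{\underline{m}}|a_{\underline{m}}|^2\geq 1$. The paper, however, never computes any $L^2$-norm of a monomial: it first proves the lower bound for the \emph{sup-norm}, by restricting $P$ to the torus $K=\{|x_0|=\dots=|x_l|=1\}$ and applying Parseval there to get $\sup_K|P|\geq 1$, hence $||P||_\infty\geq (l+1)^{-n/2}$; it then transfers this to the $L^2$-norm via the bornological comparison $||\cdot||_{L^2}\geq Ce^{-\epsilon n}||\cdot||_\infty$ supplied by Proposition \ref{reducedcaseBornology}. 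You instead stay in the $L^2$ setting throughout: you use the $U(1)^{l+1}$-orthogonality of the monomials for the Fubini--Study inner product, the explicit formula $||\underline{x}^{\underline{m}}||_{FS}^2=c_l\, m_0!\cdots m_l!/(n+l)!$, and the multinomial bound $m_0!\cdots m_l!\geq n!\,(l+1)^{-n}$ to control the smallest monomial norm, plus a direct comparison of the chosen Riemannian measure with the Fubini--Study one. What each approach buys: yours is more elementary and self-contained, with explicit constants, and makes no appeal to the bornological machinery; the paper's avoids all explicit integrals on $\PP^l(\CC)$ (in particular the monomial-norm formula) at the price of invoking the $L^2$--$L^\infty$ comparison already established in the framework of \cite{Bost-Charles}. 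Your handling of the polynomial factor $(n+l)^{-l/2}$ and of the finitely many small $n$ by shrinking $c$ is the right way to close the argument, and mirrors the role played by the factor $Ce^{-\epsilon n}$ in the paper's version.
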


\begin{proof}
As the metric is uniformly definite, it is bigger than a multiple of the Fubini-Study metric. As scaling the metric by $\lambda>0$, scale the norm $||\cdot||_{L^2}$ by $\lambda^n$ on $H^0(\PP^l_\ZZ, \mcO_\PP(n))$, we can assume that the metric is the Fubini-Study metric. \\
Let's first prove the statement for the sup norm $||\cdot||_{\infty}$.
Let $P = \sum_{\underline{i}}a_{\underline{i}}\underline{x}^{\underline{i}} \in H^0(\PP^l_\ZZ, \mcO_\PP(n)) = \ZZ[x_0,\dots, x_l]_n$. 
By integrating over $K = \{x_0 = e^{i\theta_0}, \dots , x_{l} = e^{i\theta_l}\}$, we get that $\sup_{K} |P| \geq \left(\sum_{\underline{i}}|a_{\underline{i}}|^2\right)^{1/2}  \geq 1$.
Hence, 
$$||P||_{\infty} = \sup_{[x_0:\dots : x_l]}\frac{|P(x_0, \dots, x_l)|}{(|x_0|^2+ \dots +|x_l|^2)^{n/2}}\geq \frac{1}{(l+1)^{n/2}}$$
Then, there is $\epsilon> 0$ and $C>0$ such that $||\cdot||_{L^2}\geq Ce^{-\epsilon n}||\cdot ||_{\infty}$ on $H^0(\PP^l_\ZZ, \mcO_\PP(n))$, by proposition \ref{reducedcaseBornology}.\\
Then, this concludes.
\end{proof}

\begin{cor}\label{finitenesslimit}
 Assume that $\mcO_\PP(1)$ is endowed with a uniformly definite metric. Then $\overline{c}_{l+1}(\PP^l_\ZZ, \overline{\mcO_\PP(1)},\mcO_\PP) < + \infty$.\\
 Furthermore, if $r>l+1$, $\overline{c}_{r}(\PP^l_\ZZ, \overline{\mcO_\PP(1)},\mcO_\PP)=0$.
\end{cor}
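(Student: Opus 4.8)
The plan is to reduce the statement to the two-sided estimate
$$\bigl|\widehat{\chi}(H^0(\PP^l_\ZZ,\mcO_\PP(n)),\|\cdot\|)\bigr| = O(n^{l+1})$$
for the arithmetic Euler characteristics attached to the bornology, and then read off both assertions by dividing by $n^{r}$. Writing $N=N(n)=\binom{n+l}{l}=\dim H^0(\PP^l_\ZZ,\mcO_\PP(n))\sim n^l/l!$, the error terms of size $O(N\log N)=O(n^l\log n)$ that appear below are negligible against $n^{l+1}$. I would work with a Hilbertian norm $\|\cdot\|_j$ drawn from a decreasing family defining the bornology on $H^0(\VV_{\PP^l_\ZZ}(\overline{\mcO_\PP(1)}),\mcO_\VV)$, and use that for such a norm $\widehat{\chi}=-\log\mathrm{covol}$ (the orthogonal case being the expression in the proof of Proposition \ref{CasDePn}). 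The whole argument is then classical Euclidean lattice geometry applied to the rank-$N$ lattice $\ZZ[x_0,\dots,x_l]_n$; no orthogonality of the monomials for $\|\cdot\|_j$ is needed.

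\textbf{Upper bound on $\widehat\chi$ and finiteness.} First I would bound $\widehat\chi$ from above. By Proposition \ref{reducedcaseBornology}, for fixed $j$ there are $\epsilon,C>0$ with $\|\cdot\|_j\geq Ce^{n\epsilon}\|\cdot\|_{L^2}$ on every $H^0(\PP^l_\ZZ,\mcO_\PP(n))_\CC$; together with Proposition \ref{minorationlambda} this gives a lower bound $\lambda_1(H^0(\PP^l_\ZZ,\mcO_\PP(n)),\|\cdot\|_j)\geq c_1^{\,n}$ on the first minimum, with $c_1>0$ uniform in $j$ since it descends from the fixed $L^2$-bound. As the balls of radius $\lambda_1/2$ centred at lattice points are disjoint, a packing argument yields $\mathrm{covol}\geq(\lambda_1/2)^N v_N$, with $v_N$ the volume of the Euclidean unit ball, whence
$$\widehat\chi(H^0(\PP^l_\ZZ,\mcO_\PP(n)),\|\cdot\|_j)\leq -N\log\lambda_1+N\log 2-\log v_N\leq N n\,|\log c_1|+O(N\log N).$$
Since $N\sim n^l/l!$ the right-hand side is $\leq \tfrac{|\log c_1|}{l!}n^{l+1}(1+o(1))$, so the quantity $\tfrac{(l+1)!}{n^{l+1}}\widehat\chi$ whose limits define $\overline c_{l+1}$ is bounded above by $(l+1)|\log c_1|$, uniformly in $j$. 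This gives $\overline c_{l+1}(\PP^l_\ZZ,\overline{\mcO_\PP(1)},\mcO_\PP)<+\infty$, the first assertion.

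\textbf{Lower bound on $\widehat\chi$ and vanishing for $r>l+1$.} For the second assertion I also need $\widehat\chi$ bounded below, which comes from bounding the monomials above. Since $\PP^l(\CC)$ is compact and the metric is upper semicontinuous, the ratio $|\cdot|/|\cdot|_{FS}$ is upper semicontinuous, hence bounded, so $|\cdot|\leq C_0|\cdot|_{FS}$ and $\|\underline x^{\underline m}\|_{L^2}\leq C_0^{\,n}\|\underline x^{\underline m}\|_{L^2,FS}$ for $|\underline m|=n$. A weighted AM--GM estimate gives $|\underline x^{\underline m}|_{FS}\leq 1$ pointwise, so $\|\underline x^{\underline m}\|_{L^2,FS}\leq \mathrm{vol}(\PP^l(\CC))^{1/2}$, and the reverse inequality of Proposition \ref{reducedcaseBornology} turns this into $\|\underline x^{\underline m}\|_j\leq c_2^{\,n}$ for a suitable $j$ and $c_2>0$. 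As the monomials form a $\ZZ$-basis of $H^0(\PP^l_\ZZ,\mcO_\PP(n))$, Hadamard's inequality gives $\mathrm{covol}\leq\prod_{|\underline m|=n}\|\underline x^{\underline m}\|_j\leq c_2^{\,nN}$, hence $\widehat\chi\geq -nN\log c_2\geq -O(n^{l+1})$. Combining the two estimates, $|\widehat\chi|=O(n^{l+1})$, so for $r>l+1$ one gets $\tfrac{r!}{n^r}\widehat\chi=O(n^{l+1-r})\to 0$, i.e. $\overline c_r(\PP^l_\ZZ,\overline{\mcO_\PP(1)},\mcO_\PP)=0$.

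\textbf{Main obstacle.} The genuine arithmetic input, the first-minimum bound $\lambda_1\geq c^{\,n}$, is already provided by Proposition \ref{minorationlambda}; what remains is bookkeeping, and the one delicate point is the passage between the intrinsic bornological $\widehat\chi$ and the concrete $L^2$-computations. Proposition \ref{reducedcaseBornology} only compares the defining Hilbertian norms with $\|\cdot\|_{L^2}$ up to factors $e^{\pm n\epsilon}$, so I must check that these distortions — exponential in $n$ but with arbitrarily small rate $\epsilon$ — stay inside the $O(n^{l+1})$ error and are absorbed by the limits in $j$ entering the definition of $\overline c_r$; in particular the lower bound on $\lambda_1$ must be verified to be uniform across the family so that the bound on $\overline c_{l+1}$ does not blow up as $j\to\infty$. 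I must likewise confirm that the lower-order contributions $\log v_N$ and $\log\prod\|\underline x^{\underline m}\|_j$ are $o(n^{l+1})$, which follows from $N=\binom{n+l}{l}=O(n^l)$ together with Stirling's formula.
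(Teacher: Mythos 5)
Your proof rests on the same arithmetic core as the paper's: the bound $\lambda_1 \geq c^n$ of Proposition \ref{minorationlambda} fed into Minkowski's first theorem (your packing estimate \emph{is} that theorem), together with $-\frac{1}{N}\log v_N \sim \frac{1}{2}\log N$, yields $\widehat{\chi} \leq O(n^{l+1})$ and hence finiteness. The packaging differs, though. The paper first reduces to the Fubini--Study metric by the monotonicity property of Proposition \ref{uniquenessinvariant}, then uses the smooth/continuous case of that same proposition to write $\overline{c}_{l+1}$ as a single $\limsup_n$ of $L^2$-Euler characteristics; no defining family $(\|\cdot\|_j)$, no factors $e^{\pm n\epsilon}$, and no limit over $j$ ever enter. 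You instead work intrinsically with the bornology norms and transfer estimates through Proposition \ref{reducedcaseBornology}, which obliges you to track uniformity in $j$; your resolution of that point is correct in substance (the $j$-dependence sits in a prefactor contributing $O(N)=o(n^{l+1})$, while the exponential rate is the $j$-independent one inherited from the fixed $L^2$ bound). Your Hadamard-inequality argument for the vanishing when $r>l+1$ is a genuine addition: the paper's proof does not spell out that half of the statement at all, and bounding the covolume above by the product of the monomial norms, themselves at most $c_2^n$, is the right way to close it.

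The one real defect is that Proposition \ref{reducedcaseBornology} is stated for \emph{Hermitian} line bundles --- continuous, definite metrics --- on a smooth projective variety over $\Spec \CC$, whereas your metric on $\mcO_\PP(1)$ is only upper semicontinuous and uniformly definite; both of your invocations are therefore out of scope as written (note also that the proposition compares the bornology of a given metric with the $L^2$-norms of that \emph{same} metric, so one cannot silently read $\|\cdot\|_{L^2}$ as the Fubini--Study one). The repair is short and uses only facts you already have: the sandwich $c|\cdot|_{FS} \leq |\cdot| \leq C_0|\cdot|_{FS}$ (lower bound from uniform definiteness and compactness, upper bound proved in your second paragraph) gives inclusions of analytic tubes $T_{cFS} \subseteq T \subseteq T_{C_0FS}$, hence bounded identity maps between the three bornologies; you then apply Proposition \ref{reducedcaseBornology} only to the scaled Fubini--Study metrics, which are continuous and Hermitian, and thread the resulting inequalities through these bounded maps to reach the norms $\|\cdot\|_j$ of your family. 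Equivalently, and closer to the paper: use the monotonicity property of Proposition \ref{uniquenessinvariant} with the same sandwich to reduce both claims to the Fubini--Study metric at the level of the invariants themselves, observing that rescaling the metric shifts $\overline{c}_{l+1}$ by an additive constant and leaves $\overline{c}_r$ unchanged for $r>l+1$. With that adjustment your argument is complete.
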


\begin{proof}
As in \ref{minorationlambda}, because we aim for an upper bound to $\overline{c}_{l+1}(\PP^l_\ZZ, \overline{\mcO_\PP(1)},\mcO_\PP) < + \infty$ and by the monotony property given in proposition 
\ref{uniquenessinvariant}, we can assume that the metric is the Fubini-Study metric.\\
Firstly, choose a continuous Riemannian metric on $\PP^l_\CC$. Then, as the metric is continuous, by proposition \ref{uniquenessinvariant}, we have  $$\overline{c}_{l+1}(\PP^l_\ZZ, \overline{\mcO_\PP(1)}) = \limsup_n \frac{(l+1)!}{n^{l+1}}\widehat{\chi}(H^0(\PP^l_\ZZ, \mcO_\PP(n)),||\cdot||_{L^2}) $$
Let $d_n = \mathrm{rk} H^0(\PP^l_\ZZ, \mcO_\PP(n)) = \binom{l+n}{l}$, $\widehat{\chi}_n = \widehat{\chi}(H^0(\PP^l_\ZZ, \mcO_\PP(n)),||\cdot||_{L^2}) $ and $\lambda_{1,n} = \lambda_1(H^0(\PP^l_\ZZ, \mcO_\PP(n)), ||\cdot||_{L^2})$.\\
By Minkowski's first theorem, we have $\lambda_{1,n} \leq 2\exp(-\widehat{\chi}_n/d_n) v_{d_n}^{-1/d_n} $, where $v_N$ denote the volume of the unit ball in $\RR^N$.\\
Thus, $\frac{1}{n d_n}\widehat{\chi}_n \leq \frac{1}{n}\log 2  - \frac{1}{n}\log \lambda_{1,n} -\frac{1}{n d_n}\log v_{d_n} $.\\
As $-\frac{1}{N}\log v_N \sim_N \frac{1}{2}\log N$ and by proposition \ref{minorationlambda}, this concludes.
\end{proof}

\subsection{Conclusion : proof of the arithmetic Hilbert-Samuel theorem. }\label{Proooooooof}
The idea is to use exactly the same reductions as in the demonstration of the geometric Hilbert-Samuel theorem in paragraph \ref{geometricproof} but taking into account the analytic tube and its deformations to prove the arithmetic Hilbert-Samuel theorem.\\
The process of the successive deformations and reductions will then lead to the standard case of $\PP^l_\ZZ = \Proj \ZZ[X_0, \dots , X_l]$ with the line bundle $\mcO_\PP(1)$ endowed with a seminorm stable by the action of $U(1)^{l+1}$ by multiplication on $X_0,\dots , X_l$.\\
We prove: 

\begin{thm}\label{thprincipalbis} Let $X$ be a projective scheme over $\Spec \ZZ$ of dimension $d$. Let $\overline{\mcL}$ be a uniformly definite semipositive normed line bundle over $X$, very ample over $\Spec \ZZ$. Let $\mcF$ be a coherent sheaf over $X$. Then $$-\infty < \underline{c}_{d}(X,\overline{\mcL}, \mcF) = \overline{c}_{d}(X,\overline{\mcL},\mcF)<+\infty$$
Furthermore, if $\overline{\mcL}$ is ample, this invariant is positive.
\end{thm}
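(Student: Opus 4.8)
The plan is to follow the proof of the geometric Hilbert--Samuel theorem (Theorem~\ref{hilsamgeo}) step for step, making two systematic substitutions: the additivity of $\dim_k$ over short exact sequences is replaced by the additivity of the arithmetic Hilbert invariants $\overline{c}_d$ and $\underline{c}_d$, and the purely scheme-theoretic deformation to the projective completion of the cone is replaced by its metric-enriched version, which deforms the analytic tube of $\overline{\mcL}$ while conserving the invariants by the invariance-by-deformation theorem (Corollary~\ref{invariancepardeformation} and Proposition~\ref{invarianceArithmeticHilbert}). Throughout, the bornological structure on $H^0(\VV_X(\overline{\mcL}), p_X^*\mcF)$ plays the role that the graded module $\bigoplus_n H^0(X, \mcL^{\otimes n}\otimes \mcF)$ played in the geometric argument, and the semipositivity hypothesis guarantees that the analytic-tube formalism of Definition~\ref{definitionAmpleLineBundleHolomorphicallyConvex} applies at every stage.

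First I would reduce to the case $\mcF = \mcO_X$. As in point~2 of Lemma~\ref{ensembledesreductions}, the Hilbert syzygy theorem furnishes a finite resolution of $\mcF$ by sheaves of the form $\bigoplus_i \mcL^{\otimes l_i}$; since the arithmetic Hilbert invariants are additive over exact sequences and functorial for the metric pullbacks, both $\overline{c}_d(X, \overline{\mcL}, \mcF)$ and $\underline{c}_d(X, \overline{\mcL}, \mcF)$ become alternating sums of the corresponding invariants for shifted copies of $\mcO_X$. Next I would apply the arithmetic deformation to the projective completion of the cone repeatedly, keeping track of the closed immersion into $\PP^N_\ZZ$ together with the induced tube, using the transverse $\GG_m$- and $U(1)$-actions of Corollaries~\ref{CombinaisonActionGm} and~\ref{CombinaisonActionGmU1} and the conservation of $\overline{c}_d$ and $\underline{c}_d$ under deformation. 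This reduces us to a closed immersion $X \hookrightarrow \PP^N_\ZZ = \Proj \ZZ[x_0, \dots, x_N]$ for which $X$ is stable under the $\GG_m$-action on each coordinate and the tube of $\overline{\mcL}$ is stable under the corresponding $U(1)$-action, the dimension being non-increasing along the way. Exactly as in the geometric proof, the defining ideal is then generated by monomials, so the irreducible components of $X$ are thickenings of coordinate $\PP^l_\ZZ$'s.

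I would then run the dévissage of point~3 of Lemma~\ref{ensembledesreductions} in the arithmetic setting: filter $\mcO_X$ so that the graded pieces are pushforwards ${\iota_j}_*\mcI_j$ from integral subschemes $\iota_j : V_j \hookrightarrow X$, and combine the projection formula for closed immersions with additivity to reduce to triples $(V_j, \overline{\mcL}_{|V_j}, \mcO_{V_j})$. Induction on dimension splits into two cases: if $\dim V_j < d$, the contribution to $\overline{c}_d$ and $\underline{c}_d$ vanishes, in the spirit of the vanishing clause of Corollary~\ref{finitenesslimit}, so these pieces are absorbed by the inductive hypothesis; if $\dim V_j = d$, then $V_j \cong \PP^{d-1}_\ZZ$ and $\mcO_\PP(1)$ carries a seminorm whose tube is stable under the $U(1)^d$-action on the coordinates. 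This standard case is settled by Proposition~\ref{CasDePn}, giving $\underline{c}_d = \overline{c}_d \in \RR \cup \{+\infty\}$, while the uniform-definiteness hypothesis supplies the upper bound $\overline{c}_d < +\infty$ through Corollary~\ref{finitenesslimit}; the equality $\underline{c}_d = \overline{c}_d$ then propagates back up through every reduction, and finiteness on both sides follows. For the positivity statement when $\overline{\mcL}$ is ample, I would invoke the basis of strictly effective sections of Definition~\ref{definitionAmpleLineBundleHolomorphicallyConvex}, whose sup-norms are $<1$, so that the contributions $-\log||\cdot||$ are strictly positive and force the limit to be positive.

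The main obstacle is not the formal bookkeeping but the requirement that every reduction be compatible \emph{simultaneously} with the liminf defining $\underline{c}_d$ and the limsup defining $\overline{c}_d$: additivity and invariance by deformation must be used so as to preserve the equality $\underline{c}_d = \overline{c}_d$ at each stage, not merely to bound one quantity by the other. The genuinely non-elementary input is Proposition~\ref{invarianceArithmeticHilbert}, since the deformation to the projective completion of the cone alters the analytic tube in a non-trivial way, and the conservation of the invariants along this deformation---valid precisely in the uniformly definite range---is what transports the delicate metric information to the explicit $\PP^{d-1}_\ZZ$ model, where Fekete superadditivity (Corollary~\ref{covergenceparsuadditivite}) forces convergence. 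Controlling finiteness, namely $\overline{c}_d < +\infty$ and $\underline{c}_d > -\infty$, is exactly where uniform definiteness is indispensable, via the first-minimum estimate of Proposition~\ref{minorationlambda}.
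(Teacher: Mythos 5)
Your proposal follows the same architecture as the paper's proof (syzygy reduction to $\mcO_X$, iterated deformation to the projective completion of the cone with $U(1)$-stable tubes, monomial-ideal argument, d\'evissage to integral components, induction on dimension, base case $\PP^l_\ZZ$ via Proposition \ref{CasDePn} and Corollary \ref{finitenesslimit}), but it has a genuine gap in the case analysis. Over $\Spec \ZZ$ the kernel of the surjection $\ZZ[x_0,\dots,x_N]\to\bigoplus_n H^0(X,\mcL^{\otimes n})$ is generated not by monomials but by integer multiples of monomials $n_{\underline{i}}\,\underline{x}^{\underline{i}}$, so the irreducible components of $X$ after the deformation are thickenings of coordinate $\PP^l_\ZZ$'s \emph{or vertical components of the form} $\PP^l_{\FF_p}$. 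Your dichotomy --- either $\dim V_j < d$ (absorbed by induction) or $\dim V_j = d$ and $V_j\cong\PP^{d-1}_\ZZ$ --- is therefore incomplete: a full-dimensional integral component can be vertical, lying over a prime $p$. The paper treats this as a separate third case, where $H^0(V_j,\mcL^{\otimes n}_{|V_j})$ is a torsion $\ZZ$-module, $\widehat{\chi}$ degenerates to $\log\#M_{\mathrm{tor}}$, and the asymptotics follow from the \emph{geometric} Hilbert--Samuel theorem (Theorem \ref{hilsamgeo}) over $\FF_p$. Your proposal never invokes the geometric theorem in the arithmetic part, so these components are simply not covered; this is precisely the arithmetic phenomenon that distinguishes the proof over $\ZZ$ from the one over a field.

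A second, smaller gap: Proposition \ref{invarianceArithmeticHilbert}, on which every one of your deformation steps relies, is stated for \emph{reduced} projective schemes with uniformly definite norms. You track uniform definiteness but not reducedness --- and non-reduced schemes are unavoidable here, since thickenings are exactly what the monomial ideals produce. The paper resolves this by interleaving, before each deformation, a passage to $X_{red}$ via point 1 of Lemma \ref{Arithmensembledesreductions} together with Proposition \ref{passageaureduit}, which guarantees that the $\GG_m$-actions already acquired restrict to the reduced subscheme; without this interleaving the iterated-deformation step is not justified. Two minor points: the dimension is in fact \emph{invariant} (not merely non-increasing) under the arithmetic deformation, which is part of Proposition \ref{invarianceArithmeticHilbert}; and positivity in the ample case can simply be quoted from Proposition \ref{uniquenessinvariant} rather than re-derived from strictly effective sections.
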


Let's first prove that the same reductions are possible:

\begin{lemma}\label{Arithmensembledesreductions} Let $X$ be a projective scheme over $\Spec \ZZ$ of dimension $d$. Let $\overline{\mcL}$ be a semipositive seminormed line bundle over $X$, $\mcF$ a coherent sheaf on $X$. Let $r\geq d$.
\begin{itemize}
    \item[1.] If the ideal sheaf $\mcI$ define the underlying reduced space $X_{red}$, and $\mcI^s= 0$, then, the arithmetic Hilbert invariants of $(X,\overline{\mcL},\mcF)$ are the sum of the arithmetic Hilbert invariants of $(X_{red}, \overline{\mcL}_{|X_{red}}, \mcI^j\mcF/\mcI^{j+1}\mcF)$ for $0\leq j \leq s-1$, as long as $\underline{c}_{r}(X_{red}, \overline{\mcL}_{|X_{red}}, \mcI^j\mcF/\mcI^{j+1}\mcF)=\overline{c}_{r}(X_{red}, \overline{\mcL}_{|X_{red}}, \mcI^j\mcF/\mcI^{j+1}\mcF)$.
    \item[2.] If $\underline{c}_{r}(X,\overline{\mcL},\mcO_X)=\overline{c}_{r}(X,\overline{\mcL},\mcO_X)<+\infty$, the arithmetic Hilbert invariants of $(X,\overline{\mcL},\mcF)$ are an alterned sum of arithmetic Hilbert invariants associated to $(X,\overline{\mcL},\mcO_X)$.
    \item[3.] The arithmetic Hilbert invariants of $(X,\overline{\mcL},\mcF)$ are a sum of arithmetic Hilbert invariant of triples of the form $(V,\overline{\mcL}_{|V}, \mcI)$ where $V$ is an integral closed subscheme of $X$, and $\mcI$ is an ideal sheaf on $V$,  as long as $\underline{c}_{r}(V,\overline{\mcL}_{|V}, \mcI)=\overline{c}_{r}(V,\overline{\mcL}_{|V}, \mcI)$ for this triples.
\end{itemize}
\end{lemma}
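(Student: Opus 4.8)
The plan is to mirror exactly the three reductions of Lemma \ref{ensembledesreductions} from the geometric case, but now tracking the bornological structure on the total space of sections and invoking the additivity of the arithmetic Hilbert invariants over exact sequences of coherent sheaves rather than the naive additivity of $\dim_k$. The key tool throughout is the additivity of $\widehat{\chi}$ (Lemma \ref{additivityclassical}) together with the fact that the arithmetic Hilbert invariants $\underline{c}_r$ and $\overline{c}_r$ inherit the functorial properties of the bornology on $H^0(\VV_X(\mcL),p_X^*\mcF)$, as recorded in \cite{NI2022}. The delicate point compared to the geometric case is that additivity of the invariants over a short exact sequence only yields equality of the upper (resp.\ lower) invariants unconditionally; to split the upper invariant as a genuine sum one needs the liminf and limsup to agree on enough of the pieces, which is why each clause carries the hypothesis $\underline{c}_r = \overline{c}_r$ on the relevant subquotients. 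I would make this the organizing principle of the write-up.

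For point 1, I would use the same finite filtration $0 = \mcI^s\mcF \subset \mcI^{s-1}\mcF \subset \dots \subset \mcI\mcF \subset \mcF$ as in the geometric proof. Each successive quotient $\mcI^j\mcF/\mcI^{j+1}\mcF$ is annihilated by $\mcI$, hence supported on $X_{red}$, so by the projection formula for closed immersions (the arithmetic analogue of Lemma \ref{CompatibilitySubspaces}, which holds because pullback along $\iota$ is an isometry on the bornological spaces) its invariants coincide with those of the pushed-forward sheaf on $X_{red}$ with the restricted seminormed line bundle $\overline{\mcL}_{|X_{red}}$. Then additivity of the invariants along the exact sequences of the filtration gives the claimed sum, and the stated hypothesis that $\underline{c}_r = \overline{c}_r$ on each subquotient is precisely what upgrades the additivity of upper and lower invariants separately into a single equality for $(X,\overline{\mcL},\mcF)$.

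For point 2, I would invoke the arithmetic analogue of the Hilbert Syzygy resolution: $\mcF$ admits a finite resolution by sheaves of the form $\bigoplus_i \mcL^{\otimes l_i}$. Writing the associated long exact sequence and using additivity of $\widehat{\chi}$, the invariants of $\mcF$ become an alternating sum of invariants of shifted copies of $\mcO_X$; the finiteness hypothesis $\underline{c}_r(X,\overline{\mcL},\mcO_X) = \overline{c}_r(X,\overline{\mcL},\mcO_X) < +\infty$ is what keeps the alternating sum well-defined in $\RR$ rather than in $\RR\cup\{+\infty\}$, and it also allows the upper and lower invariants to be manipulated by the same additive relations. For point 3, I would use the dévissage filtration whose graded pieces are of the form ${\iota_j}_*\mcI_j$ with $V_j$ integral (the arithmetic counterpart of \cite[Lemma 01YF]{stacks-project}), apply the projection formula again to reduce each piece to $(V_j,\overline{\mcL}_{|V_j},\mcI_j)$, and conclude by additivity, under the stated agreement of $\underline{c}_r$ and $\overline{c}_r$ on each triple.

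The main obstacle, and the place where the argument genuinely differs from the geometric one, is controlling the behaviour of the invariants under the exact sequences: unlike $\dim_k$, which is exactly additive, the $\widehat{\chi}$ of a bornological space is only subadditive/superadditive in general, so the equalities hold as equalities of \emph{limits} only when the upper and lower invariants coincide on the building blocks. I expect the bulk of the care to go into verifying that additivity of $\widehat{\chi}$ (Lemma \ref{additivityclassical}) passes to the normalized limits defining $\underline{c}_r$ and $\overline{c}_r$, and into checking that the conditional hypotheses in each clause are exactly strong enough to make the telescoping of limits legitimate — this is the reason the lemma is phrased with the explicit side conditions $\underline{c}_r = \overline{c}_r$ on the subquotients rather than as an unconditional additivity statement.
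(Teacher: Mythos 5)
Your plan follows the paper's proof almost verbatim: the same $\mcI$-adic filtration plus projection formula for point 1, the Hilbert Syzygy resolution for point 2, the dévissage filtration from the Stacks Project for point 3, all combined via the additivity and projection-formula properties of proposition \ref{uniquenessinvariant}, with the side conditions $\underline{c}_r=\overline{c}_r$ playing exactly the role you identify.

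There is, however, one step in point 2 that your sketch glosses over and that the paper argues explicitly. After resolving $\mcF$ by sheaves $\bigoplus_i \mcL^{\otimes l_i}$, you say the invariants of $\mcF$ become an alternating sum of invariants of ``shifted copies of $\mcO_X$''. In the geometric case a twist by $\mcL^{\otimes l_i}$ literally shifts the Hilbert function, so nothing more is needed. In the arithmetic case, $\overline{c}_r(X,\overline{\mcL},\mcL^{\otimes l_i})$ is defined from the bornology on $H^0(\VV_X(\mcL),p_X^*\mcL^{\otimes l_i})=\bigoplus_n H^0(X,\mcL^{\otimes n+l_i})$, and it is not formal that this bornological graded module is ``the same up to shift'' as $H^0(\VV_X(\mcL),\mcO_\VV)$: one must check that its seminorms are equivalent to those induced from $\mcO_\VV$ under the natural inclusion. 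The paper does this by observing that the injective morphism of coherent sheaves $p_X^*\mcL^{\otimes l_i}=\bigoplus_{n}\mcL^{\otimes n+l_i}\to \bigoplus_n \mcL^{\otimes n}=\mcO_\VV$ induces a \emph{strict} morphism on spaces of sections, whence $\overline{c}_r(X,\overline{\mcL},\mcL^{\otimes l_i})=\overline{c}_r(X,\overline{\mcL},\mcO_X)$ and likewise for $\underline{c}_r$. Without this identification, your argument only expresses the invariants of $\mcF$ in terms of the twisted sheaves $\mcL^{\otimes l_i}$, not of $(X,\overline{\mcL},\mcO_X)$ as the statement requires. A minor related imprecision: the obstruction you describe is not that $\widehat{\chi}$ is merely subadditive --- lemma \ref{additivityclassical} makes it exactly additive on exact sequences with the induced and quotient seminorms --- but that $\limsup$ and $\liminf$ are only sub- and superadditive when passing to the normalized limits, which is precisely why the additivity clause of proposition \ref{uniquenessinvariant} demands $\underline{c}_r=\overline{c}_r$ on one of the two pieces.
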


\begin{proof}
For point $1$, as there is a filtration $0 = \mcI^s\mcF \subset \mcI^{s-1}\mcF \subset \cdots \subset \mcI\mcF \subset \mcF$ and that $\mcI^j \mcF /\mcI^{j+1}\mcF$ are supported on
$X_{red}$, this is a consequence of the additivity with respect to exact sequences of coherent sheaves property and the projection formula given in proposition \ref{uniquenessinvariant}. \\

For point $2$, by the Hilbert Syzygy theorem, there is a finite resolution of $\mcF$ by coherent sheaves of the forms $\bigoplus_i \mcL^{\otimes l_i}$.\\
Note that the arithmetic Hilbert invariants associated to $(X,\overline{\mcL}, \mcL^{\otimes l_i})$ and to $(X,\overline{\mcL}, \mcO_X)$ are the same. Indeed the injective morphism of coherent sheaves $p_X^*\mcL^{l_i} =\bigoplus_{n\in \NN}  \mcL^{\otimes n + l_i} \to \bigoplus_{n\in \NN} \mcL^{\otimes n} = \mcO_\VV $ over $\VV_X(\mcL)$ induces a strict morphism on the spaces of sections.\\
Furthermore, the arithmetic Hilbert invariants associated to $(X,\overline{\mcL}, \mcO_X)$ are then finite, by assumption and the proposition \ref{uniquenessinvariant}.\\
Now, as $\overline{\mcL}$ is semipositive, by finiteness and the additivity with respect to exact sequences, this concludes for point $2$.

For point 3, there is a filtration $0= \mcF_0 \subset \mcF_1 \subset \dots \mcF_r = \mcF$ by coherent subsheaves such that for $1\leq j \leq r$, there is an integral subscheme $\iota_j : V_j \to X$ and an ideal sheaf $\mcI_j \subset \mcO_{V_j}$ such that $\mcF_j/\mcF_{j+1} \simeq {\iota_j}_* \mcI_j$, see    
\cite[\href{https://stacks.math.columbia.edu/tag/01YF}{Lemma 01YF}]{stacks-project}. Thus, by the additivity with respect to short exact sequences and the projection formula, this concludes for point $3$.\\
\end{proof}

\begin{proof}[Proof of theorem \ref{thprincipalbis}] Firstly, by point 2 of lemma \ref{Arithmensembledesreductions}, we are reduced to prove the case $\mcF = \mcO_X$.\\
Let $\overline{\mcL} = (\VV_X(\mcL), T)$.\\
By applying the deformation to the projective completion of the cone repetitively by keeping track of closed immersions to $\PP^N_k$, using corollary \ref{CombinaisonActionGmU1}, proposition \ref{invarianceArithmeticHilbert} and point $1$ and $2$ of lemma \ref{Arithmensembledesreductions}, we can assume that there is a closed immersion induced by sections of $\mcL$, $X\to \PP^N_\ZZ = \Proj \ZZ[x_0, \dots, x_N]$ such that, for all $0\leq i \leq N$, $X$ is stable by the action of $\GG_m$ by multiplication on $x_i$ and $T$ is stable by the corresponding action of $U(1)^{l+1}$. More precisely, this is possible because of the conservation of the arithmetic Hilbert invariants through the deformation to the projective completion of the cone which is possible for uniformly definite norms and reduced spaces, see proposition \ref{invarianceArithmeticHilbert}. The proposition \ref{invarianceArithmeticHilbert} also ensures that the seminorms considered are uniformly definite, and using proposition \ref{passageaureduit} and points 1 and 2 of lemma \ref{Arithmensembledesreductions} ensures that we can work on reduced spaces. Furthermore, by the geometric Hilbert-Samuel theorem and lemma \ref{invariancepardeformation}, the dimension is invariant by this deformation.\\
By noting then that the surjection $\ZZ[x_0, \dots, x_N] \to \bigoplus H^0(X,\mcL^{\otimes n})$ is graded in each variable $x_i$, the kernel is an ideal generated by monomes $n_{\underline{i}}\underline{x}^{\underline{i}}$, the irreducible components of $X$ are thickening of $\PP^l_\ZZ = \Proj \ZZ[x_{i_0}, \dots , x_{i_l}]$, or vertical components of the form $\PP^l_{\FF_p}$.\\
By point $3$ of \ref{Arithmensembledesreductions}, we are reduced to prove the theorem for the integral subschemes $V_i$ of $X$ and $\mcF$ is an ideal sheaf of $\mcO_{V_i}$. By point $2$ of \ref{Arithmensembledesreductions} again, we can assume that $\mcF = \mcO_{V_i}$.\\
Then, three cases arise, either $\dim V_i < \dim X$ and we conclude by induction, and $\overline{c}_{d}(V_i,\overline{\mcL}_{|V_i}, \mcO_{V_i})=\underline{c}_{d}(V_i,\overline{\mcL}_{|V_i}, \mcO_{V_i})=0$, either $\dim V_i = \dim X$ and $V_i$ is a vertical component and then the result is a direct corollary of the geometric Hilbert-Samuel theorem, either $\dim V_i = \dim X$ and $V_i$ is $\PP^l_\ZZ = \Proj \ZZ[x_{i_0}, \dots , x_{i_l}] $ with $l = \dim X-1$, where the analytic tube is stable by the actions by $U(1)$ by multiplication on $x_{i_0}, \dots, x_{i_l}$.\\
Finally, by using point 2 of \ref{Arithmensembledesreductions}, we are reduced to the case $(\PP^l_\ZZ=\Proj \ZZ[x_{i_0}, \dots , x_{i_l}], \overline{\mcO_\PP(1)}, \mcO_\PP)$, where the analytic tube is stable by the action of $U(1)^{l+1}$ by multiplication on $x_{i_0}, \dots, x_{i_l}$. And, in this case the theorem is true by proposition \ref{CasDePn} and the finiteness result in corollary \ref{finitenesslimit}.\\
\end{proof}

As a corollary, we obtain theorem \ref{thprincipal}:

\begin{cor}Let $X$ be a projective scheme over $\Spec \ZZ$ and $\overline{\mcL}$ a uniformly definite semipositive normed line bundle over $X$. Let $\mcF$ be a coherent sheaf over $X$. Let $d$ be the dimension of the support of $\mcF$.  Then, 
$$ - \infty <\underline{c}_{d}(X,\overline{\mcL}, \mcF) = \overline{c}_{d}(X,\overline{\mcL},\mcF)<+\infty$$
\end{cor}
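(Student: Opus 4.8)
The plan is to deduce the statement from Theorem~\ref{thprincipalbis}, which already yields the conclusion in the special case where $d$ is the dimension of the ambient scheme and $\overline{\mcL}$ is very ample. Two gaps separate that theorem from the corollary: here $d$ is only the dimension of the support of $\mcF$, and $\overline{\mcL}$ is merely semipositive (hence ample, but possibly not very ample). First I would dispose of the support issue. Let $\iota\colon Y \hookrightarrow X$ be the closed immersion attached to the scheme-theoretic support of $\mcF$, so that $\mcF = \iota_*\mcG$ for a coherent sheaf $\mcG$ on $Y$ with $\dim Y = d$. Uniform definiteness and semipositivity are inherited by $\overline{\mcL}_{|Y} = \iota^*\overline{\mcL}$, since the bounding continuous norm restricts and ampleness together with holomorphic convexity of the tube are preserved under restriction along a closed immersion. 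By the projection formula for closed immersions recorded in Proposition~\ref{uniquenessinvariant}, one has $\overline{c}_d(X,\overline{\mcL},\mcF) = \overline{c}_d(Y,\overline{\mcL}_{|Y},\mcG)$ and likewise for $\underline{c}_d$. This turns ``$d = \dim\mathrm{supp}\,\mcF$'' into the genuine equality ``$d = \dim Y$'', matching the hypothesis of Theorem~\ref{thprincipalbis}.

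Next I would upgrade ampleness to very ampleness on $Y$. Choose $m$ large enough that $\mcL^{\otimes m}_{|Y}$ is very ample; then $\overline{\mcL}^{\otimes m}_{|Y}$ remains uniformly definite and semipositive, so Theorem~\ref{thprincipalbis} applies to $(Y,\overline{\mcL}^{\otimes m}_{|Y},\mcG)$ and gives $-\infty < \underline{c}_d(Y,\overline{\mcL}^{\otimes m}_{|Y},\mcG) = \overline{c}_d(Y,\overline{\mcL}^{\otimes m}_{|Y},\mcG) < +\infty$. Passing to the $m$-th power replaces the graded module $\bigoplus_n H^0(Y,\mcL^{\otimes n}\otimes\mcG)$ by its $m$-th Veronese piece $\bigoplus_n H^0(Y,\mcL^{\otimes mn}\otimes\mcG)$, endowed with the induced bornology. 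Since a $\limsup$ (resp.\ $\liminf$) taken over the subsequence of multiples of $m$ is at most (resp.\ at least) the corresponding quantity over the full sequence, this yields the homogeneity estimates $m^d\,\underline{c}_d(Y,\overline{\mcL}_{|Y},\mcG) \leq \underline{c}_d(Y,\overline{\mcL}^{\otimes m}_{|Y},\mcG)$ and $\overline{c}_d(Y,\overline{\mcL}^{\otimes m}_{|Y},\mcG) \leq m^d\,\overline{c}_d(Y,\overline{\mcL}_{|Y},\mcG)$. Finiteness from above is not produced by these alone and should instead be extracted from the uniformly definite hypothesis via a Minkowski first-minimum bound, exactly as in Corollary~\ref{finitenesslimit}, applied now on $Y$.

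The hard part will be the final pinching. Combining the two homogeneity inequalities with the equality from Theorem~\ref{thprincipalbis} only places the common Veronese value between $m^d\,\underline{c}_d(Y,\overline{\mcL}_{|Y},\mcG)$ and $m^d\,\overline{c}_d(Y,\overline{\mcL}_{|Y},\mcG)$, and does not by itself force $\underline{c}_d = \overline{c}_d$ for $\overline{\mcL}_{|Y}$ itself; subsequential convergence along multiples of $m$ must be promoted to genuine convergence of $\tfrac{d!}{n^d}\,\widehat{\chi}(\overline{H^0(Y,\mcL^{\otimes n}\otimes\mcG)})$ as $n \to \infty$. To achieve this I would invoke the superadditivity machinery already used in Proposition~\ref{CasDePn}: the bornology is computed from the submultiplicative algebra norms $\|\cdot\|_{\infty,U_j}$ on relatively compact neighborhoods of the tube, so the underlying numerical invariants are superadditive, and Corollary~\ref{covergenceparsuadditivite} converts this into full convergence. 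For a general module $\mcG$ this requires first reducing to the algebra case, which is precisely what the devissage of Lemma~\ref{Arithmensembledesreductions} provides; once genuine convergence holds, $\underline{c}_d(Y,\overline{\mcL}_{|Y},\mcG) = \overline{c}_d(Y,\overline{\mcL}_{|Y},\mcG)$ follows, and tracing the projection-formula reduction backwards delivers the equality and finiteness for the original triple $(X,\overline{\mcL},\mcF)$.
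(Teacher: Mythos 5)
Your first two reductions (handling the support of $\mcF$ via the projection formula of Proposition~\ref{uniquenessinvariant}, then passing to a very ample power $\overline{\mcL}^{\otimes m}$ so that Theorem~\ref{thprincipalbis} applies) match the paper's setup, and you correctly identify where the real difficulty lies: convergence along the Veronese subsequence does not pinch $\underline{c}_d$ against $\overline{c}_d$ for $\overline{\mcL}$ itself. But your proposed resolution of that difficulty does not work. The superadditivity machinery of Proposition~\ref{CasDePn} is not a general principle about submultiplicative algebra norms: it rests on the fact that, for $\PP^l_\ZZ$ with a $U(1)^{l+1}$-invariant tube, the monomials $\underline{x}^{\underline{m}}$ form a $\ZZ$-basis of each $H^0(\PP^l_\ZZ,\mcO_\PP(n))$ which is \emph{orthogonal} for the Hilbertian seminorms, so that $\widehat{\chi}(H^0(\PP^l_\ZZ,\mcO_\PP(n)),||\cdot||_i)=\sum_{|\underline{m}|=n}-\log||\underline{x}^{\underline{m}}||_i$, and submultiplicativity of $||\cdot||_{\infty,U_j}$ on monomials then yields a superadditive multi-indexed sequence to feed into Corollary~\ref{covergenceparsuadditivite}. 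On a general integral $Y$ with a merely ample $\overline{\mcL}$ there is no monomial basis and no torus action making any lattice basis orthogonal; $\widehat{\chi}$ is a covolume, not a sum of $-\log$ norms of basis vectors, and submultiplicativity of sup norms on the section algebra gives no superadditivity of $n\mapsto\widehat{\chi}(H^0(Y,\mcL^{\otimes n}\otimes\mcG))$ (making such statements precise is essentially the hard content of the Yuan and Boucksom--Chen approaches, not a corollary of Proposition~\ref{superadditive}). Nor can the devissage of Lemma~\ref{Arithmensembledesreductions} bring you back to the torus-invariant situation: it only produces integral subschemes and structure sheaves, while the reduction to $\PP^l_\ZZ$ with invariant tube in the paper goes through the deformation to the projective completion of the cone, which requires the underlying line bundle to be very ample --- exactly the hypothesis you no longer have at this stage.

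The paper closes the gap by a different, purely finite argument. After reducing to $X$ integral, $\mcF=\mcO_X$, and fixing $n_0$ with $\mcL^{\otimes n_0}$ very ample, it applies Theorem~\ref{thprincipalbis} not just to the Veronese algebra but to all the twisted triples $(X,\overline{\mcL}^{\otimes n_0},\mcL^{\otimes i})$ for $0\leq i<n_0$; via strictness of the inclusions $\bigoplus_n H^0(X,\mcL^{\otimes nn_0+i})\subset\bigoplus_N H^0(X,\mcL^{\otimes N})$, these govern the subsequences of $\frac{d!}{N^d}\widehat{\chi}(H^0(X,\mcL^{\otimes N}))$ along \emph{every} residue class $N\equiv i\ (\mathrm{mod}\ n_0)$, which together cover the whole sequence. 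Then, for a global section $s$ of $\mcL^{\otimes kn_0+1}$, the exact sequence $0\to\mcL^{\otimes i}\to\mcL^{\otimes kn_0+i+1}\to\mcL^{\otimes i}_{|\mathrm{div}(s)}\to 0$, additivity with respect to exact sequences, and the vanishing of the degree-$d$ invariants on the lower-dimensional support $\mathrm{div}(s)$ give $\overline{c}_{d}(X,\overline{\mcL}^{\otimes n_0},\mcL^{\otimes i})=\overline{c}_{d}(X,\overline{\mcL}^{\otimes n_0},\mcL^{\otimes i+1})$ and likewise for $\underline{c}_d$, so the finitely many subsequential limits all coincide; full convergence and finiteness follow at once. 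Your outline is sound up to, but missing precisely, this step.
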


\begin{proof}
Firstly, by the projection formula in proposition \ref{uniquenessinvariant}, by pulling back $\overline{\mcL}$ to the support of $\mcF$, we can assume that $d$ is the dimension of $X$.
Then, by point 3 and 2 of lemma \ref{Arithmensembledesreductions}, we can assume that $X$ is integral and that $\mcF = \mcO_X$.\\
Take $n_0$ such that $\mcL^{\otimes n_0}$ is very ample.\\
Then, for $i \in \NN$, by theorem \ref{thprincipalbis}, $$-\infty < \underline{c}_{d}(X,\overline{\mcL}^{\otimes n_0}, \mcL^{\otimes i}) = \overline{c}_{d}(X,\overline{\mcL}^{\otimes n_0},\mcL^{\otimes i})<+\infty$$
Let $q_X: \VV_X(\mcL^{\otimes n_0}) \to X$ be the total space of $\mcL^{\otimes -n_0}$ and $p_X: \VV_X(\mcL) \to X$ be the total space of $\mcL^\vee$. \\
Then, we have a finite map over $X$, $f :  \VV_X(\mcL) \to \VV_X(\mcL^{\otimes n_0})$. \\
Thus, the morphisms $$H^0(\VV_X(\overline{\mcL}^{\otimes n_0}), q_X^*\mcL^{\otimes i})\to H^0(\VV_X(\overline{\mcL}^{\otimes n_0}), f_*p_X^*\mcL^{\otimes i}) \to H^0(\VV_X(\overline{\mcL}), p_X^*\mcL^{\otimes i}) $$ are strict, by flat base change and by \cite{Bost-Charles}. This is the injection of $\bigoplus_n H^0(X,\mcL^{\otimes nn_0 + i})$ into $\bigoplus_{n} H^0(X,\mcL^{n+i})$. Therefore, the invariants $\overline{c}_{d}(X,\overline{\mcL}^{\otimes n_0},\mcL^{\otimes i})$ and $\underline{c}_{d}(X,\overline{\mcL}^{\otimes n_0},\mcL^{\otimes i})$ are given by taking subsequences of the sequences defining $\overline{c}_{d}(X,\overline{\mcL}, \mcO_X)$ and $\underline{c}_{d}(X,\overline{\mcL}, \mcO_X)$.\\

Now, let $s: \mcO_X \to \mcL^{\otimes kn_0+1}$ be a global section of $ \mcL^{\otimes kn_0+1}$. By tensoring by $\mcL^{\otimes i}$, we get a short exact sequence $0\to \mcL^{\otimes i} \to \mcL^{\otimes kn_0 + i +1 } \to \mcL^{\otimes i}_{|div(s)}\to 0  $.\\
By using the additivity with respect to short exact sequences, the fact that the support of $\mcL^{\otimes i}_{|div(s)}$ is of dimension strictly less than $X$ and theorem \ref{thprincipalbis}, we get the following equalities: 
 $$\overline{c}_{d}(X,\overline{\mcL}^{\otimes n_0},\mcL^{\otimes i})= \overline{c}_{d}(X,\overline{\mcL}^{\otimes n_0},\mcL^{\otimes kn_0+ i+1}) = \overline{c}_{d}(X,\overline{\mcL}^{\otimes n_0},\mcL^{\otimes i+1})$$
$$\underline{c}_{d}(X,\overline{\mcL}^{\otimes n_0},\mcL^{\otimes i})= \underline{c}_{d}(X,\overline{\mcL}^{\otimes n_0},\mcL^{\otimes kn_0+ i+1})= \underline{c}_{d}(X,\overline{\mcL}^{\otimes n_0},\mcL^{\otimes i+1})$$
And, hence, by equalities of the limits of the subsequences, it proves the equality :   
$$- \infty < \underline{c}_{d}(X,\overline{\mcL}, \mcO_X) = \overline{c}_{d}(X,\overline{\mcL},\mcO_X)<+\infty$$
This concludes.\end{proof}

In the non semipositive case, we still have a convergence result: 

\begin{cor}Let $X$ be a projective scheme over $\Spec \ZZ$ of dimension $d$ and $\overline{\mcL}$ a uniformly definite normed line bundle over $X$ such that the underlying line bundle $\mcL$ is ample over $\Spec \ZZ$, then 
$$-\infty < \underline{c}_{d}(X,\overline{\mcL}, \mcO_X) = \overline{c}_{d}(X,\overline{\mcL},\mcO_X)<+\infty$$
Furthermore, if $\overline{\mcL}$ is ample, this invariant is positive.
\end{cor}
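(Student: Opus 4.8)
The strategy is to deduce this non-semipositive statement from Theorem \ref{thprincipal} by replacing $\overline{\mcL}$ with its associated equilibrium seminorm. Writing $\overline{\mcL} = (\VV_X(\mcL), T)$, set $\overline{\mcL}^{\mathrm{eq}} = (\VV_X(\mcL), \widehat{T})$, where $\widehat{T}$ is the holomorphically convex hull of $T$ in $\VV_X(\mcL)(\CC)$; by the remark following its definition, $\widehat{T}$ genuinely defines a seminormed line bundle. Since $\mcL$ is ample over $\Spec \ZZ$, Grauert's criterion guarantees that $\VV_X(\mcL)$ is a modification of an affine scheme, and $\widehat{T}$ is holomorphically convex by construction, so $\overline{\mcL}^{\mathrm{eq}}$ is semipositive in the sense of Definition \ref{definitionAmpleLineBundleHolomorphicallyConvex}. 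Uniform definiteness of $\overline{\mcL}$ means that $T$ contains a neighbourhood of the zero section $0_X$; as $\widehat{T} \supseteq T$, the same holds for $\widehat{T}$, so $\overline{\mcL}^{\mathrm{eq}}$ is again a uniformly definite normed line bundle.

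Next I would show that the two seminorms define the same arithmetic Hilbert invariants for $\mcF = \mcO_X$. These invariants depend only on the bornology carried by $\bigoplus_n H^0(X, \mcL^{\otimes n})_\CC$, and Proposition \ref{passageAlenveloppeholo} asserts precisely that the restriction morphism $H^0(\VV_X(\mcL), \widehat{T}, \mcO_\VV) \to H^0(\VV_X(\mcL), T, \mcO_\VV)$ is an isomorphism of bornological algebras. Since Proposition \ref{passageAlenveloppeholo} is stated for reduced schemes, I would first invoke Proposition \ref{passageaureduit} to reduce to the case where $X$ is reduced; the tube and its hull live in $\VV_X(\mcL)(\CC) = \VV_{X_{red}}(\mcL_{|X_{red}})(\CC)$, so this reduction is compatible with the hull operation. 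One then obtains
$$\underline{c}_{d}(X, \overline{\mcL}, \mcO_X) = \underline{c}_{d}(X, \overline{\mcL}^{\mathrm{eq}}, \mcO_X), \qquad \overline{c}_{d}(X, \overline{\mcL}, \mcO_X) = \overline{c}_{d}(X, \overline{\mcL}^{\mathrm{eq}}, \mcO_X).$$
Applying Theorem \ref{thprincipal} to the semipositive uniformly definite line bundle $\overline{\mcL}^{\mathrm{eq}}$ then yields $-\infty < \underline{c}_{d}(X, \overline{\mcL}^{\mathrm{eq}}, \mcO_X) = \overline{c}_{d}(X, \overline{\mcL}^{\mathrm{eq}}, \mcO_X) < +\infty$, and these equalities transport the conclusion back to $\overline{\mcL}$.

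For the positivity clause, I would verify that $\overline{\mcL}$ is ample if and only if $\overline{\mcL}^{\mathrm{eq}}$ is. Viewing a section $s \in H^0(X, \mcL^{\otimes n})$ as a function on $\VV_X(\mcL)(\CC)$ that is homogeneous on the fibres, the maximum principle defining the holomorphically convex hull gives $\sup_{T} |s| = \sup_{\widehat{T}} |s|$, so $s$ is strictly effective for $\overline{\mcL}$ exactly when it is strictly effective for $\overline{\mcL}^{\mathrm{eq}}$; hence the two notions of ampleness coincide and the positivity follows from the corresponding clause of Theorem \ref{thprincipal}. The main obstacle is the second paragraph: everything hinges on the identification of invariants through Proposition \ref{passageAlenveloppeholo}, i.e.\ on the structural fact that the Hilbertian bornology of the total space of sections is an invariant of the equilibrium seminorm rather than of $T$ itself, together with the compatibility of the hull operation with the passage to $X_{red}$.
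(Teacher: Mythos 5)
Your route is exactly the paper's: replace $\overline{\mcL}=(\VV_X(\mcL),T)$ by the equilibrium seminormed bundle $(\VV_X(\mcL),\widehat{T})$, observe via Proposition \ref{passageAlenveloppeholo} that the arithmetic Hilbert invariants only see the bornological module $H^0(\VV_X(\overline{\mcL}),\mcO_\VV)$ and hence do not change, and then apply Theorem \ref{thprincipal}. The paper's own proof is precisely these three sentences; the additional verifications you supply --- semipositivity of the hull via Grauert's criterion, uniform definiteness from $\widehat{T}\supseteq T$, and the transfer of ampleness through $\sup_{T}|\tilde{s}|=\sup_{\widehat{T}}|\tilde{s}|$ for the function $\tilde{s}$ on $\VV_X(\mcL)(\CC)$ attached to a section $s$ --- are correct and are points the paper leaves implicit, the last one being genuinely needed to invoke the positivity clause of Theorem \ref{thprincipal}.

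The one step that does not work as written is your appeal to Proposition \ref{passageaureduit} to ``reduce to the case where $X$ is reduced.'' That proposition has nothing to do with invariants or bornologies: it states that a $\GG_m$-action on $\PP^N_A$ restricting to $X$ also restricts to $X_{red}$, and it cannot convert the statement for $X$ into one for $X_{red}$. The difficulty is real: although $T$ and $\widehat{T}$ live in $\VV_X(\mcL)(\CC)=\VV_{X_{red}}(\mcL_{|X_{red}})(\CC)$, the underlying graded module $\bigoplus_n H^0(X,\mcL^{\otimes n})$ differs from $\bigoplus_n H^0(X_{red},\mcL^{\otimes n}_{|X_{red}})$ when $X$ is non-reduced, and the d\'evissage that would compare their invariants (point 1 of Lemma \ref{Arithmensembledesreductions}, resting on the additivity in Proposition \ref{uniquenessinvariant}) requires semipositivity --- which is exactly what you do not yet have before passing to the equilibrium norm, so that route is circular. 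To be fair, the paper's proof silently applies Proposition \ref{passageAlenveloppeholo}, which it states only for reduced $X$, to a possibly non-reduced $X$; so the gap you tried to close is present there too, but your proposed patch does not close it. As written, your argument (like the paper's) is complete only under the additional hypothesis that $X$ is reduced, or by invoking a version of Proposition \ref{passageAlenveloppeholo} without the reducedness assumption from \cite{NI2022}.
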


\begin{proof} By proposition \ref{passageAlenveloppeholo}, as $\overline{c}_{d}(X,\overline{\mcL}, \mcO_X)$ and $\underline{c}_{d}(X,\overline{\mcL}, \mcO_X)$ are invariants of the arithmetic Hilbertian $\mcO_{\Spec \ZZ}$-module $H^0(\VV_X(\overline{\mcL}),\mcO_\VV)$, we can assume that $\mcL$ is endowed with the associated equilibrium norm. That is, we can assume that $\overline{\mcL}$ is a uniformly definite semipositive normed line bundle. Then, the result is theorem \ref{thprincipal}.\\
\end{proof}

Finally, the classical formulation of the Hilbert-Samuel theorem is a direct consequence of theorem \ref{thprincipal}: 

\begin{cor} Let $X$ be a projective scheme over $\Spec \ZZ$ of smooth generic fiber. Let $\overline{\mcL}$ be a semipositive Hermitian line bundle over $X$.
Let $\overline{\mcF}$ be a Hermitian vector bundle over $X$. Fix a continuous Riemannian metric on $X(\CC)$, and denote $||\cdot||_{L^2}$ the associated $L^2$ norms on $H^0(X(\CC),\mcL^{\otimes n}\otimes \mcF)$. Let $d$ the Krull dimension of $X$.
Then, 
\begin{center}
    $\displaystyle{\frac{d!}{n^d}}\widehat{\chi}(H^0(X,\mcL^{\otimes n} \otimes \mcF), ||\cdot||_{L^2})$ converges towards a finite limit when $n\to \infty$.
\end{center}
\end{cor}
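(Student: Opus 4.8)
The plan is to deduce this statement directly from Theorem \ref{thprincipal}, by recognizing that in the smooth Hermitian setting the arithmetic Hilbert invariants $\underline{c}_d$ and $\overline{c}_d$ are precisely the lower and upper limits of the normalized arithmetic Euler characteristics appearing in the statement.

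First I would check that the hypotheses of Theorem \ref{thprincipal} are satisfied. A continuous Hermitian metric on $\mcL$ is in particular a continuous definite seminorm, so $\overline{\mcL}$ is uniformly definite in the sense of Definition \ref{uniformlydef} (one may take $|\cdot|' = |\cdot|$), and a semipositive Hermitian line bundle is semipositive as a seminormed line bundle, as recalled after Definition \ref{definitionAmpleLineBundleHolomorphicallyConvex}. Since $\overline{\mcF}$ is a Hermitian vector bundle, the underlying coherent sheaf $\mcF$ is locally free of positive rank, hence its support is all of $X$ and the dimension of this support equals the Krull dimension $d$ of $X$. Theorem \ref{thprincipal} then yields $-\infty < \underline{c}_d(X,\overline{\mcL},\mcF) = \overline{c}_d(X,\overline{\mcL},\mcF) < +\infty$.

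It remains to identify these invariants with the limits in the statement. The invariant $\overline{c}_d(X,\overline{\mcL},\mcF)$ is an intrinsic invariant of the bornology on $H^0(\VV_X(\mcL), p_X^*\mcF)$ and does not depend on the auxiliary Hermitian metric chosen on $\mcF$. Because the generic fiber of $X$ is smooth, $X(\CC)$ is a smooth complex manifold, so Proposition \ref{reducedcaseBornology} describes this bornology, up to factors $e^{n\epsilon}$, by the $L^2$-norms attached to the chosen Riemannian metric and to the Hermitian metric on $\mcF$. Feeding this into the definition of the invariants and letting $\epsilon \to 0$ --- exactly the computation carried out for continuous metrics in Proposition \ref{uniquenessinvariant} and already used in Corollary \ref{finitenesslimit} --- gives
\[
\overline{c}_d(X,\overline{\mcL},\mcF) = \limsup_{n} \frac{d!}{n^d}\widehat{\chi}(H^0(X,\mcL^{\otimes n}\otimes\mcF), ||\cdot||_{L^2}),
\]
\[
\underline{c}_d(X,\overline{\mcL},\mcF) = \liminf_{n} \frac{d!}{n^d}\widehat{\chi}(H^0(X,\mcL^{\otimes n}\otimes\mcF), ||\cdot||_{L^2}).
\]
Combining the two preceding paragraphs, the limit inferior and limit superior of the sequence $\frac{d!}{n^d}\widehat{\chi}(H^0(X,\mcL^{\otimes n}\otimes\mcF), ||\cdot||_{L^2})$ coincide and are finite, so the sequence converges towards a finite limit.

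The only genuinely delicate point is the second step: one must justify that the intrinsic bornological invariant, defined without reference to any metric on $\mcF$, is actually computed by the explicit $L^2$-arithmetic Euler characteristic built from the given Hermitian metric on $\mcF$. This is where the smoothness of the generic fiber is essential, through the $L^p$-comparison of Proposition \ref{reducedcaseBornology}, and where the scaling behaviour of $\widehat{\chi}$ under the $e^{n\epsilon}$ factors must be controlled: scaling the norm by $e^{n\epsilon}$ shifts $\widehat{\chi}$ by $-n\epsilon\cdot\mathrm{rk}\,H^0(X,\mcL^{\otimes n}\otimes\mcF)$, and since this rank grows like $n^{d-1}$ --- one order below $n^d$, the generic fiber having dimension $d-1$ --- the contribution to $\frac{d!}{n^d}\widehat{\chi}$ is bounded by a constant multiple of $\epsilon$ and therefore disappears in the limit $\epsilon \to 0$.
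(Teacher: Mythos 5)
Your proposal is correct and follows essentially the same route as the paper, whose proof is precisely ``a direct consequence of Proposition \ref{uniquenessinvariant} and Theorem \ref{thprincipal}'': you verify the hypotheses of Theorem \ref{thprincipal} and then invoke the smooth-case-with-continuous-norms identification of the invariants with the $L^2$-limits, which is exactly the relevant item of Proposition \ref{uniquenessinvariant}. The extra details you supply (uniform definiteness of a continuous Hermitian metric, the support of $\mcF$ being all of $X$, and the $e^{n\epsilon}$-scaling estimate using the rank growth $n^{d-1}$) are accurate unpackings of what the paper leaves implicit.
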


\begin{proof}
This is a direct consequence of proposition \ref{uniquenessinvariant} and theorem \ref{thprincipal}.\\
\end{proof}

\section*{\textsc{Appendix}}

\section{Arithmetic Hilbert invariants}

\subsection{Arithmetic degree for seminormed Hermitian coherent sheaves.} We use the usual notion of arithmetic degree to develop invariants for graded arithmetic Hilbertian $\mcO_{\Spec \ZZ}$-modules.

\begin{defn}
A \emph{seminormed Hermitian coherent sheaf} over $\Spec \ZZ$ is a pair $\overline{M} :=(M,||\cdot||)$ where $M$ is a module of finite type over $\ZZ$ and $||\cdot||$ is a Hilbertian seminorm over $M_{\RR} \colon= M\otimes_{\ZZ}\RR$.\\
The \emph{arithmetic degree} of $\overline{M}$ is defined by: 
\begin{align*}
    \widehat{\chi}(\overline{M}) &  := 0 & \text{ if } M=0,\\
    & := +\infty & \text{ else, if } ||\cdot|| \text{ is not a norm,}\\
    &:= -\log \text{covol}(\overline{M}) + \log \#M_{\text{tor}} & \text{ otherwise}
\end{align*}
where the covolume is computed using the unique translation invariant Radon measure $\mu_{\overline{M}}$ on $M_{\RR}$ that satisfies the following condition: for any orthonormal basis $(e_1 , \dots  , e_N )$ of $(M_\RR , ||\cdot|| )$,$$ \mu_{\overline{M}}\left(\sum [0,1]e_i \right) = 1$$

\end{defn}

\subsubsection*{Additivity with respect to exact sequence.} One of the most important property of the arithmetic degree is the additivity with respect to exact sequence. \\

\begin{lemma}\label{additivityclassical} Let $0 \to \overline{N} \to \overline{M} \to \overline{P} \to 0$ be an exact sequence of seminormed Hermitian coherent sheaves over $\Spec \ZZ$, i.e. the seminorm on $P_\RR$ is the quotient seminorm and the seminorm on $N_\RR$ is the induced seminorm. Then $$\widehat{\chi}(\overline{M}) = \widehat{\chi}(\overline{P}) + \widehat{\chi}(\overline{N})$$
Furthermore, if $\{0\} = F_0 \subset \dots \subset F_n = M$ is a filtration of $M$ by coherent sheaves over $\Spec \ZZ$. Let $\overline{F_i/F_{i+1}}$  be the subquotient of $\overline{M}$, defined by $\overline{F_i/F_{i+1}}= (F_i/F_{i+1}, ||\cdot||_{sq})$ . Then, we have $$\widehat{\chi}(\overline{M}) = \sum \widehat{\chi}(\overline{F_i/F_{i+1}})$$
\end{lemma}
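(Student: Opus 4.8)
The plan is to reduce to the non-degenerate case and then split $\widehat{\chi}$ into its archimedean (covolume) and finite (torsion) contributions; the short exact sequence case is the heart of the matter, and the filtration formula will follow from it by induction on $n$.

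First I would dispose of the case where some seminorm fails to be a norm. Let $q_M$ be the positive semidefinite form underlying $||\cdot||$ on $M_\RR$ and let $K_M\subseteq M_\RR$ be its radical, and similarly $K_N,K_P$ for $\overline{N},\overline{P}$. Since the form on $N_\RR$ is induced, $K_N=K_M\cap N_\RR$; writing $M_\RR=K_M\oplus V$ with $q_M$ positive definite on $V$ and computing the quotient seminorm as a distance in $V$ to the image of $N_\RR$, one checks $K_P=(K_M+N_\RR)/N_\RR$, so that $0\to K_N\to K_M\to K_P\to 0$ is exact. Hence $||\cdot||$ is a norm on $M_\RR$ if and only if it is a norm on both $N_\RR$ and $P_\RR$. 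Because $\widehat{\chi}$ takes values in $\RR\cup\{+\infty\}$ and never $-\infty$, whenever $K_M\neq 0$ both sides of the claimed identity equal $+\infty$. Thus I may assume all three seminorms are genuine norms, so all three arithmetic degrees are finite.

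Next I would treat the archimedean part. The induced/quotient hypotheses give an orthogonal decomposition $M_\RR=N_\RR\oplus N_\RR^{\perp}$ in which the composite $N_\RR^{\perp}\hookrightarrow M_\RR\twoheadrightarrow P_\RR$ is an isometry for the quotient metric; comparing orthonormal bases shows $\mu_{\overline{M}}=\mu_{\overline{N}}\otimes\mu_{\overline{P}}$ under this identification. Let $L_M=M/M_{\mathrm{tor}}$ denote the image lattice, and likewise $L_N,L_P$, and let $L_M^{0}=(N_\QQ\cap M)/M_{\mathrm{tor}}\subseteq N_\RR$ be the saturation of $L_N$ inside $L_M$; it is exactly the kernel of the surjection $L_M\twoheadrightarrow L_P$. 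The product structure of the measure yields the fibration formula $\mathrm{covol}(\overline{M})=\mathrm{covol}(L_M^{0})\cdot\mathrm{covol}(\overline{P})$, where $L_M^{0}$ is measured with the metric of $\overline{N}$, and since $L_N\subseteq L_M^{0}$ are full-rank lattices in $N_\RR$ with the same metric, $\mathrm{covol}(L_M^{0})=\mathrm{covol}(\overline{N})/[L_M^{0}:L_N]$.

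It then remains to match the finite contributions, i.e. to prove the purely group-theoretic identity $[L_M^{0}:L_N]\cdot\#M_{\mathrm{tor}}=\#N_{\mathrm{tor}}\cdot\#P_{\mathrm{tor}}$. Here I would observe that the preimage $M'=N_\QQ\cap M$ is precisely the preimage of $P_{\mathrm{tor}}$ under $M\to P$, so $0\to N\to M'\to P_{\mathrm{tor}}\to 0$ gives $[M':N]=\#P_{\mathrm{tor}}$; factoring $[M':N]=[M':N+M_{\mathrm{tor}}]\cdot[N+M_{\mathrm{tor}}:N]$ and using $[N+M_{\mathrm{tor}}:N]=\#M_{\mathrm{tor}}/\#N_{\mathrm{tor}}$ together with $[M':N+M_{\mathrm{tor}}]=[L_M^{0}:L_N]$ yields the identity. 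Combining this with the previous paragraph and taking $-\log$ proves $\widehat{\chi}(\overline{M})=\widehat{\chi}(\overline{N})+\widehat{\chi}(\overline{P})$. The filtration formula then follows by induction on $n$: apply the short exact sequence case to $0\to F_{n-1}\to M\to M/F_{n-1}\to 0$, note that the subquotient norms of $\overline{F_{n-1}}$ agree with those induced from $\overline{M}$ by transitivity of the induced and quotient constructions, and invoke the induction hypothesis. I expect the main obstacle to be the finite-part bookkeeping: since $N$ need not be saturated in $M$, the covolume computation naturally produces the saturation $L_M^{0}$, and one must show that the $\log\#(\cdot)_{\mathrm{tor}}$ terms absorb exactly the index $[L_M^{0}:L_N]$; by contrast, the orthogonal decomposition giving the archimedean factorization is routine linear algebra.
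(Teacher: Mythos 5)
Your proof is correct and complete. Note, however, that the paper does not actually give a proof of this lemma: its entire ``proof'' is the citation \emph{See, for instance, \cite{chambertloir}}, since additivity of $\widehat{\chi}$ in admissible short exact sequences is a standard fact of Arakelov theory. What you have written is essentially the standard argument that such a reference supplies, so there is no divergence of method to report, only the fact that you filled in what the paper delegates to the literature. All the delicate points check out: the exact sequence of radicals $0 \to K_N \to K_M \to K_P \to 0$ correctly disposes of the degenerate case (using, as you note, that $\widehat{\chi}$ never takes the value $-\infty$, so $+\infty$ on one side forces $+\infty$ on the other); the orthogonal splitting $M_\RR = N_\RR \oplus N_\RR^{\perp}$ identifies $\mu_{\overline{M}}$ with $\mu_{\overline{N}} \otimes \mu_{\overline{P}}$ and yields the covolume fibration through the saturation $L_M^{0} = L_M \cap N_\RR$; and your index identity $[L_M^{0}:L_N]\cdot \# M_{\mathrm{tor}} = \# N_{\mathrm{tor}} \cdot \# P_{\mathrm{tor}}$ is exactly the bookkeeping needed to make the torsion terms match, proved correctly via $[M':N] = \# P_{\mathrm{tor}}$ for $M' = N_\QQ \cap M$. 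The induction for the filtration statement via transitivity of induced and quotient seminorms is also sound. One small editorial remark: the statement in the paper writes the subquotients as $F_i/F_{i+1}$ for a filtration $F_0 \subset \dots \subset F_n = M$, which is an index typo; your reading $F_i/F_{i-1}$ (equivalently $F_{i+1}/F_i$) is the intended one.
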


\begin{proof}
See, for instance, \cite{chambertloir}.
\end{proof}

\subsection{Arithmetic Hilbert invariants: definitions.}

Inspired by the arithmetic Hilbert-Samuel theorem, we define the following numerical invariants :

\begin{defn}\label{invariantforgradedmodule}
Let $A_\bullet$ be a graded algebra over $\ZZ$, finitely generated in degree $1$ of Krull. Let $M_\bullet$ be a graded arithmetic Hilbertian $\mcO_{\Spec \ZZ}$-module and assume furthermore that $M_\bullet$ is a finite graded $A_\bullet$-module. Let $d$ be the dimension of the support of $M_\bullet$. Let $r\geq d$.\\
Let $(||\cdot ||_i)_{i\in \mathbb{N}}$ be a decreasing family of Hilbertian seminorms invariant by complex conjugation defining the bornology on $M_\bullet$.
Let  $$\overline{c}_{r}(M_\bullet, (||\cdot||_i)) = \lim_{i}\limsup_n \frac{r!}{n^{r}}\widehat{\chi}(M_n, ||.||_i)$$
$$\underline{c}_{r}(M_\bullet, (||\cdot||_i)) = \lim_{i}\liminf_n \frac{r!}{n^{r}}\widehat{\chi}(M_n, ||.||_i)$$

\end{defn}

\paragraph{Notation.} This definition depends only on the bornology $\mcB$ on $M_\bullet$ (see \cite[lemma 5.6]{NI2022}) and this invariants are denoted $$\underline{c}_{r}(M_\bullet, \mcB) \text{ and } \overline{c}_{r}(M_\bullet, \mcB)$$

As explained in section \ref{partiebornologies}, if $X$ is a projective scheme over $\Spec \ZZ$, $\overline{\mcL}$ is a seminormed line bundle over $X$ which is ample over $\Spec \ZZ$, $\mcF$ is coherent sheaf over $X$ and $p_X:\VV_X(\mcL) \to X$ is the total space of $\mcL^{\vee}$, then $H^0(\VV_X(\mcL), p_X^*\mcF)$ inherits a structure of graded arithmetic Hilbertian $\mcO_{\Spec \ZZ}$-module, which is a finite module over the graded algebra $H^0(\VV_X(\mcL), \mcO_\VV)$.\\

\begin{defn}
With the above notation, if $\mcB$ is the bornology associated with the graded arithmetic Hilbertian $\mcO_{\Spec \ZZ}$-module $H^0(\VV_X(\mcL), p_X^*\mcF)$ and $r\geq \dim \mathrm{Supp} \mcF$ then we denote by 
\begin{center}
    $\overline{c}_{r}(X,\overline{\mcL}, \mcF) $ and $\underline{c}_{r}(X,\overline{\mcL}, \mcF)$
\end{center} the invariants $\overline{c}_{r}(H^0(\VV_X(\mcL), p_X^*\mcF), \mcB) $ and $\underline{c}_{r}(H^0(\VV_X(\mcL), p_X^*\mcF), \mcB)$, respectively.\\
 $\overline{c}_{r}(X,\overline{\mcL}, \mcF) $ and $\underline{c}_{r}(X,\overline{\mcL}, \mcF)$ are called the \emph{arithmetic Hilbert invariants associated to  $(X,\overline{\mcL},\mcF)$}.\\
\end{defn}

\subsection{Arithmetic Hilbert invariants: properties.}

Reflecting the bornological properties of the total space of sections of a hermitian line bundle, the arithmetic Hilbert invariants verify the following properties:

\begin{prop}\label{uniquenessinvariant}
The numerical invariants $\overline{c}_{r}(X,\overline{\mcL},\mcF)$ and $\underline{c}_{r}(X,\overline{\mcL},\mcF)$
where $X$ is a projective scheme over $\Spec \ZZ$ of dimension $d$, $\overline{\mcL}$ is a seminormed line bundle over $X$ ample over $\Spec \ZZ$, $\mcF$ is a coherent sheaf over $X$ and $r\geq d$ with $d= \dim \mathrm{Supp} \mcF$, verify the following properties: 
\begin{itemize}
    \item Lower finiteness: $\overline{c}_{r}(X,\overline{\mcL},\mcF)$ and $\underline{c}_{r}(X,\overline{\mcL},\mcF)$ belong to $\RR \cup \{+\infty\}$
    \item Positivity: if $\overline{\mcL}$ is ample, $\overline{c}_{r}(X,\overline{\mcL},\mcF)$ and $\underline{c}_{r}(X,\overline{\mcL},\mcF)$ are nonnegative. Furthermore, they are positive if $r= d$.
    \item Projection formula: if $i:Y\to X$ is a closed immersion, $\mcF$ is a coherent sheaf on $Y$, $\overline{\mcL}$ is a seminormed line bundle over $X$, then $$\overline{c}_{r}(X,\overline{\mcL},i_*\mcF) = \overline{c}_{r}(Y,i^*\overline{\mcL},\mcF)$$ $$\underline{c}_{r}(X,\overline{\mcL},i_*\mcF) = \underline{c}_{r}(Y,i^*\overline{\mcL},\mcF)$$
    \item Additivity with respect to exact sequences: if $\overline{\mcL}$ is semipositive and $0 \to \mcE \to \mcF \to \mcG \to 0$ is an exact sequence of coherent sheaves, then 
    $$\overline{c}_{r}(X,\overline{\mcL},\mcF) = \overline{c}_{r}(X,\overline{\mcL},\mcE) + \overline{c}_{r}(X,\overline{\mcL},\mcG)$$ 
    $$\underline{c}_{r}(X,\overline{\mcL},\mcF) = \underline{c}_{r}(X,\overline{\mcL},\mcE) + \underline{c}_{r}(X,\overline{\mcL},\mcG)$$
    if $\underline{c}_{r}(X,\overline{\mcL},\mcE) = \overline{c}_{r}(X,\overline{\mcL},\mcE)$ or $\underline{c}_{r}(X,\overline{\mcL},\mcG) = \overline{c}_{r}(X,\overline{\mcL},\mcG)$.
    \item Smooth case with continuous norms: if $X_\CC$ is smooth, and if $|\cdot|$ is a continuous norm on $\mcL$, $\mcF$ is vector bundle on $X$, then $\mcF$ can be endowed with a structure $\overline{\mcF}$ of Hermitian vector bundle over $X$ and if we fix a Riemannian continuous metric invariant by complex conjugation on $X(\CC)$, then
    $$\overline{c}_{r}(X,\overline{\mcL}, \mcF) = \limsup_n \frac{r!}{n^{r}}\widehat{\chi}(H^0(X,\mcL^{\otimes n}\otimes \mcF), ||.||_{L^2})$$
$$\underline{c}_{r}(X,\overline{\mcL}, \mcF) = \liminf_n \frac{r!}{n^{r}}\widehat{\chi}(H^0(X,\mcL^{\otimes n}\otimes \mcF), ||.||_{L^2})$$
    \item Monotonicity: if $|\cdot| \geq |\cdot|'$ are two seminorms on $\mcL$. Then:
    $$\overline{c}_{r}(X,(\mcL,|\cdot|), \mcF)\leq  \overline{c}_{r}(X,(\mcL,|\cdot|'), \mcF)$$  $$\underline{c}_{r}(X,(\mcL,|\cdot|), \mcF)\leq  \underline{c}_{r}(X,(\mcL,|\cdot|'), \mcF)$$
\end{itemize}
\end{prop}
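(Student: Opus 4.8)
The plan is to derive all six properties from three ingredients: the additivity of the arithmetic degree $\widehat{\chi}$ recorded in Lemma~\ref{additivityclassical}, the fact (used implicitly in the definition) that $\overline{c}_r$ and $\underline{c}_r$ depend only on the bornology $\mcB$, so that one may compute with whatever defining family is convenient, and classical geometry-of-numbers (Hadamard/Minkowski) estimates. Two properties are then essentially formal. For the \emph{projection formula}, a closed immersion $i\colon Y\to X$ induces $\VV_Y(i^*\mcL)\simeq \VV_X(\mcL)\times_X Y$, under which the analytic tube of $i^*\overline{\mcL}$ is the restriction of $T$; combined with the sheaf isomorphism $H^0(X,\mcL^{\otimes n}\otimes i_*\mcF)\simeq H^0(Y,(i^*\mcL)^{\otimes n}\otimes\mcF)$ of Lemma~\ref{CompatibilitySubspaces}, this exhibits the two graded arithmetic Hilbertian $\mcO_{\Spec\ZZ}$-modules as isomorphic, whence the invariants agree. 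For \emph{monotonicity}, $|\cdot|\geq|\cdot|'$ means the tube of $\overline{\mcL}$ is contained in that of $(\mcL,|\cdot|')$, so each $L^\infty$-type seminorm attached to a neighborhood of the smaller tube dominates a cofinal one attached to the larger tube; hence $\widehat{\chi}(M_n,\|\cdot\|_i)\leq \widehat{\chi}(M_n,\|\cdot\|'_{i'})$ term by term, and the inequalities survive $\limsup_n$, $\liminf_n$ and $\lim_i$.

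For \emph{lower finiteness} I would bound the covolume from above. Since the bornology comes from relatively compact neighborhoods of the compact tube $T$, every defining seminorm dominates a constant times an $L^\infty$- or $L^2$-norm whose values on $H^0(X,\mcL^{\otimes n}\otimes\mcF)$ grow at most geometrically, say $\leq C^{n}$; choosing a $\ZZ$-basis of $M_n$ from such bounded vectors gives $\operatorname{covol}(M_n)\leq C^{\,n\operatorname{rk}M_n}$ and hence $\widehat{\chi}(M_n)\geq -n\,(\operatorname{rk}M_n)\log C$. By Theorem~\ref{hilsamgeo}, $\operatorname{rk}M_n=\dim_{\QQ}H^0(X_\QQ,\mcL^{\otimes n}\otimes\mcF_\QQ)$ is a polynomial in $n$ of degree $d-1$, so dividing by $n^{r}$ with $r\geq d$ keeps $\tfrac{r!}{n^r}\widehat{\chi}(M_n)$ bounded below, placing the two limits in $\RR\cup\{+\infty\}$. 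This degree $d-1$ for the rank is exactly what makes the \emph{smooth case with continuous norms} work: by Proposition~\ref{reducedcaseBornology} the family $(\|\cdot\|_j)$ is squeezed between $C_j e^{n\epsilon_j}\|\cdot\|_{L^2}$ and $C'_j e^{n\epsilon'_j}\|\cdot\|_{L^2}$ with $\epsilon_j,\epsilon'_j\to 0$, and rescaling an $L^2$-norm by $e^{n\epsilon}$ shifts $\widehat{\chi}(M_n)$ by $\mp(\operatorname{rk}M_n)\,n\epsilon=O(\epsilon\,n^{d})$. Thus for $r=d$ one gets $\tfrac{r!}{n^r}\widehat{\chi}(M_n,\|\cdot\|_j)=\tfrac{r!}{n^r}\widehat{\chi}(M_n,\|\cdot\|_{L^2})+O(\epsilon_j)$, and taking $\limsup_n$ then $\lim_j$ yields the stated equalities.

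For \emph{positivity}, ampleness of $\overline{\mcL}$ gives, in some fixed degree $m$, a basis of $H^0(X,\mcL^{\otimes m})$ of strictly effective sections, so there is $\rho<1$ with all of them of $L^\infty$-norm $\leq\rho^{m}$. Since $\|\cdot\|_\infty$ is submultiplicative, monomials of degree $k$ in these sections have norm $\leq\rho^{mk}$ and, for $n=mk\gg 0$, generate $M_n$ up to torsion; extracting a basis and applying Hadamard gives $\operatorname{covol}(M_n)\leq\rho^{\,n\operatorname{rk}M_n}$, so $\widehat{\chi}(M_n)\geq n\,(\operatorname{rk}M_n)\log(1/\rho)\geq 0$. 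Because $\operatorname{rk}M_n\sim c\,n^{d-1}$, this simultaneously gives nonnegativity and, when $r=d$, the strictly positive lower bound $\tfrac{d!}{n^d}\widehat{\chi}(M_n)\geq d!\,c\,\log(1/\rho)>0$ in the limit; the only care needed is a uniform $\rho<1$, which the ampleness hypothesis supplies.

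The main obstacle is \emph{additivity}. Given $0\to\mcE\to\mcF\to\mcG\to 0$ with $\overline{\mcL}$ semipositive, Serre vanishing makes $0\to H^0(\mcL^{\otimes n}\otimes\mcE)\to H^0(\mcL^{\otimes n}\otimes\mcF)\to H^0(\mcL^{\otimes n}\otimes\mcG)\to 0$ exact for $n\gg 0$, and Lemma~\ref{additivityclassical} then gives the term-by-term identity $\widehat{\chi}(M_n^{\mcF},\|\cdot\|_i)=\widehat{\chi}(M_n^{\mcE},\|\cdot\|_i)+\widehat{\chi}(M_n^{\mcG},\|\cdot\|_i)$ \emph{provided} the seminorms on the sub- and quotient-modules are the induced and quotient seminorms. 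The genuinely delicate point is precisely that the canonical bornologies on $H^0(\VV_X(\mcL),p_X^*\mcE)$ and $H^0(\VV_X(\mcL),p_X^*\mcG)$ coincide with the subspace and quotient bornologies of the one on $H^0(\VV_X(\mcL),p_X^*\mcF)$; this \emph{strictness} of the maps of total spaces of sections is where semipositivity is used, through the $L^2$-surjectivity (Hörmander-type) estimates of \cite{Bost-Charles}. Granting strictness, one passes to the limit: as $\limsup$ is only subadditive, the identities for $\overline{c}_r$ and $\underline{c}_r$ require one of the three scaled sequences to converge, which is exactly the hypothesis $\underline{c}_r(\mcE)=\overline{c}_r(\mcE)$ or $\underline{c}_r(\mcG)=\overline{c}_r(\mcG)$, allowing the $\limsup$ (resp.\ $\liminf$) of a sum to split. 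Establishing the strictness carefully, rather than the numerical bookkeeping, is what I expect to be the hardest part of the argument.
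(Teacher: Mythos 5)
You should be aware, first, that the paper does not actually prove Proposition \ref{uniquenessinvariant}: it is appendix material quoted from the companion paper, and the proof given is the single line ``See \cite[section 5]{NI2022}''. So your proposal can only be measured against that reference. Its overall architecture is the right one --- compute with whichever defining family of seminorms is convenient, use Hadamard/Minkowski for finiteness and positivity, and reduce additivity to the \emph{strictness} of the induced/quotient bornologies, which is exactly what semipositivity buys through \cite{Bost-Charles}. Your treatment of the projection formula, of the smooth case (squeezing via Proposition \ref{reducedcaseBornology}, with the $e^{n\epsilon}$ factors absorbed into an $O(\epsilon n^{d})$ shift of $\widehat{\chi}$ since $\mathrm{rk}\,M_n = O(n^{d-1})$), and of the $\limsup$/$\liminf$ bookkeeping in additivity is correct.

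There are, however, concrete gaps. (1) \emph{Monotonicity}: duality reverses inequalities, so $|\cdot|\geq|\cdot|'$ gives $T\supseteq T'$, not $T\subseteq T'$ as you assert; with your inclusion, the claim that a seminorm attached to a neighborhood of the smaller tube dominates a cofinal one attached to the larger tube is false, since a sup over a small neighborhood of a small compact cannot dominate sups over neighborhoods of a larger compact. The correct route: every neighborhood $U$ of $T$ is also a neighborhood of $T'$, hence contains some $U'\supseteq T'$ from the other family, giving $||\cdot||_{\infty,U}\geq||\cdot||_{\infty,U'}$ and then the termwise inequality on $\widehat{\chi}$ that you want. (2) \emph{Lower finiteness and positivity}: the assertions that $M_n$ admits a $\ZZ$-basis of vectors of norm $\leq C^{n}$, resp.\ that degree-$k$ monomials in strictly effective sections span $M_n$ up to torsion, are precisely the statements requiring proof. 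One needs finite generation in degree one of $\bigoplus_n H^0(X,\mcL^{\otimes n})$, finiteness of $\bigoplus_n H^0(X,\mcL^{\otimes n}\otimes\mcF)$ as a module over it, and surjectivity over $\QQ$ of the multiplication maps in large degree (Castelnuovo--Mumford regularity), combined with submultiplicativity of sup-norms on the total space, to produce full-rank families of bounded vectors; none of this is in your outline. (3) In \emph{positivity} the inequality goes the wrong way for the norms you use: the defining seminorms \emph{dominate} the sup-norm over the tube itself, so a bound $||s||_{\infty}\leq\rho^{m}<1$ gives no bound on $||s||_{\infty,U}$; you must first use compactness of $T$ (and finiteness of the set of sections) to get $\sup_{U}|s|\leq\rho'<1$ on a fixed neighborhood $U$, and only then apply Hadamard. (4) Your rank-based lower bound says nothing for vertical components, where $M_n$ is torsion and $\mathrm{rk}\,M_n=0$: there, strict positivity at $r=d$ comes from the term $\log\#M_n$, not from a covolume estimate.
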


\begin{proof}
See \cite[section 5]{NI2022}.
\end{proof}

\section{The deformation to the projective completion of the cone : case of a divisor}

Following the key idea for intersection theory as developed in \cite{Baum1975RiemannrochFS} and \cite{fulton}, we use an explicit deformation to compute our invariants. The aim is a geometric interpretation of the "dévissage" technique.
This deformation is the deformation to the projective completion of the cone and was introduced in its full generality in \cite{NI2022}. 
It takes a projective scheme $X$ over $\Spec \ZZ$, a seminormed line bundle $\overline{\mcL}$ such that the underlying line bundle is very ample over $\Spec \ZZ$, a closed immersion $i:Y\to X$ which is a hyperplane section with respect to $\mcL$, and associate a projective scheme $D_YX$ over $\Spec \ZZ$, projective over $\PP^1_\ZZ$, and a semipositive seminormed line bundle $\overline{\mcL'}$. $D_YX$ contains the deformation to the normal cone as an open subscheme, see proposition 8.1 in  \cite{NI2022}. We will use here the special case where $Y$ is the divisor associated to a global section of a very ample line bundle.\\
We will describe here the main properties of the deformation to the projective completion of the cone, see \cite{NI2022} for further details.

\subsection{Main geometric properties.}

Let $A$ be either a field $k$ or $\ZZ$. \\
The data is the following :
\begin{itemize}
    \item A projective scheme $X$ over $\Spec A$.
    \item A closed immersion $i : Y \to X$.
    \item A line bundle $\mcL$ over $X$ very ample over $\Spec A$, such that the ideal $I_\bullet \subset \bigoplus H^0(X,\mcL^{\otimes n})$ is generated in degree 1.
\end{itemize}
\paragraph{Terminology:} We say that $Y$ is a \emph{hyperplane section with respect to $\mcL$}.\\

Then the deformation to the projective completion of the cone associated to $(X,Y,\mcL)$ is denoted $(D_YX,\mcL')$ and is a projective scheme over $\Spec A$ and a projective scheme over $\PP^1_A$.\\
In the case where $Y=\mathrm{div}(s)$ with $s\in H^0(X,\mcL)$, the notation is $(D_sX, \mcL')$.\\

It verifies the following properties :

\begin{prop}\label{associatedamplelinebundle} Let $X$ be a projective scheme over $\Spec A$. Let $\mcL$ be a line bundle over $X$ very ample over $\Spec A$. Let $i:Y\to X$ be a hyperplane section with respect to $\mcL$.\\
Let $(D_YX, \mcL')$ be the deformation to the projective completion of the cone associated to $(X,Y,\mcL)$. Then,
\begin{itemize}
    \item the line bundle $\mcL'$ over $D_YX$ is ample over $\Spec A$,
    \item the restriction of $(D_YX,\mcL')$ over $1 \in \PP^1_A$ is $(X,\mcL)$,
    \item the restriction of $(D_YX,\mcL')$ over $\infty \in \PP^1_A$ is $(\Proj \bigoplus I_\bullet^l/I_\bullet^{l+1}, \mcO(1))$ where 
   $\mcO(1)$ is the canonical line bundle associated to the generated in degree $1$ graded algebra $\bigoplus I_\bullet^l/I_\bullet^{l+1}$.
\end{itemize}
\end{prop}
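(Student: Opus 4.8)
The plan is to realize $(D_YX,\mcL')$ through an explicit $\GG_m$-degeneration and then read off the three properties. Write $R_\bullet=\bigoplus_n H^0(X,\mcL^{\otimes n})$; since $\mcL$ is very ample, the degree-$1$ part embeds $X$ as a closed subscheme $X\hookrightarrow \PP^N_A=\Proj A[x_0,\dots,x_N]$ with $\mcL=\mcO(1)$. In the divisor case $Y=\mathrm{div}(s)$ I may take $s=x_0$, so that the homogeneous ideal $I_\bullet\subset R_\bullet$ is the $(x_0)$-ideal, generated in degree $1$. Consider the $\GG_m$-action scaling $x_0$ and let $D_YX$ be the closure in $\PP^N_A\times_A\PP^1_A$ of the graph of the orbit map $\GG_m\times X\to \PP^N_A$, projecting to $\PP^1_A$ via the second factor with $\infty$ the limit point. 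I would then set $\mcL'$ to be the restriction to $D_YX$ of $\mcO_{\PP^N}(1)\boxtimes\mcO_{\PP^1}(1)$.

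Granting this construction (carried out in full generality in \cite{NI2022}, proposition 8.1), the first property is immediate: $\mcO_{\PP^N}(1)\boxtimes\mcO_{\PP^1}(1)$ is very ample on $\PP^N_A\times_A\PP^1_A$ over $\Spec A$ by the Segre embedding, and the restriction of a very ample line bundle to a closed subscheme is very ample, hence ample. The hypotheses that $\mcL$ is very ample and that $I_\bullet$ is generated in degree $1$ are what make the construction go through (they guarantee $X$ and $Y$ sit inside $\PP^N_A$ as stated and that the degeneration is cut out by the expected relations), rather than being needed for ampleness itself.

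For the second and third properties I would compute the two fibers of $D_YX\to\PP^1_A$. Over a general point such as $1\in\PP^1_A$ the orbit map is an isomorphism onto its image, so the fiber is $X$ and $\mcL'$ restricts to $\mcO_{\PP^N}(1)|_X=\mcL$. Over $\infty$ the fiber is the flat limit of $X$ under the degeneration, and the standard identification via the extended Rees algebra $\bigoplus_k I_\bullet^k\,t^{-k}$ shows this flat limit to be $\Proj\bigoplus_l I_\bullet^l/I_\bullet^{l+1}$, with $\mcL'$ restricting to the canonical degree-$1$ line bundle $\mcO(1)$ of that graded algebra. This is exactly the asserted description of the two fibers.

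The main obstacle I anticipate is justifying the fiber identification at $\infty$ \emph{integrally}, i.e. that the flat limit genuinely equals $\Proj\bigoplus_l I_\bullet^l/I_\bullet^{l+1}$ in the arithmetic setting $A=\ZZ$ and without any reducedness hypothesis on $X$. This amounts to flatness of $D_YX\to\PP^1_A$, equivalently to the extended Rees algebra being torsion-free over the base coordinate, which holds because $Y$ is a Cartier divisor (the section $s$ being a non-zero-divisor on $R_\bullet$); only then does $I_\bullet^l/I_\bullet^{l+1}$ take the expected shape and does taking $\Proj$ commute with passing to the special fiber. The remaining work is the bookkeeping matching the two descriptions of $\mcL'$ on each fiber, and all of this is established in \cite{NI2022}, which I would cite for the construction while limiting the argument here to the verifications above.
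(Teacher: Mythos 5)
Your argument, whatever its merits as a construction of \emph{some} degeneration, is not about the object the proposition concerns, and this is the essential gap. The paper's own proof is simply the citation \cite[Proposition 3.14]{NI2022}: the pair $(D_YX,\mcL')$ is the specific one built there from the extended Rees algebra of $I_\bullet$, and this paper pins it down through Proposition \ref{immerpn}, which gives a closed immersion over $\PP^1_A$ of $D_sX$ into the hypersurface $\{ty=ux_N\}$ of $\Proj A[x_0,\dots,x_N,y]\times_A\Proj A[u,t]$ --- note the \emph{extra} homogeneous coordinate $y$, playing the role of the degree-one Rees generator $s\tau^{-1}$. Your $D_YX$, the closure of the orbit graph inside $\PP^N_A\times_A\PP^1_A$ with no extra coordinate, is a genuinely different scheme. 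Concretely, take $X=\PP^1_A=\Proj A[x_0,x_1]$, $Y=\mathrm{div}(x_1)$, $\mcL=\mcO_\PP(1)$: each orbit image is all of $\PP^1$, so your closure is $\PP^1_A\times_A\PP^1_A$; on the other hand, the fiber of $D_sX$ over $1$ is the full fiber $\{y=x_1\}$ of the hypersurface (a surjective closed immersion of integral schemes is an isomorphism), hence $D_sX$ contains the preimage of $\GG_m$, and since $\{ty=ux_1\}$ is integral, $D_sX=\{ty=ux_1\}$, which is the blow-up of $\PP^2_A$ at $[1:0:0]$. After base change to $\QQ$ the two surfaces are not isomorphic (one contains a curve of negative self-intersection, the other does not), so your family cannot be the one of \cite{NI2022}. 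Proving the three bullets for a different degeneration with the same fibers does not prove the proposition, and it matters here: the rest of the paper uses this precise model of $D_YX$ (Proposition \ref{immerpn}, the transverse $\GG_m$-action of Proposition \ref{CombinaisonActionGm}, the tube deformation entering Proposition \ref{invarianceArithmeticHilbert}). Your supporting citation is also off: according to this paper, \cite[Proposition 8.1]{NI2022} identifies the deformation to the normal cone as an open subscheme of $D_YX$; it is not a construction of your orbit closure.

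Even taken on its own terms, the argument is thinnest exactly at its crux, the fiber over $\infty$. You justify the identification with $\Proj\bigoplus_l I_\bullet^l/I_\bullet^{l+1}$ by ``the standard identification via the extended Rees algebra,'' but your family is an orbit closure, not the $\Proj$ of a Rees algebra, so two things would have to be proved: (i) flatness of the closure over $\PP^1_A$, which is not automatic --- closures of flat families over a dense open are automatically flat only over one-dimensional regular bases, and $\PP^1_\ZZ$ is two-dimensional, so without an argument the fiber at $\infty$ in residue characteristic $p$ need not be the flat limit you describe; and (ii) that the special fiber, cut out by the ideal of initial forms for the $x_N$-weight, has homogeneous coordinate ring isomorphic \emph{as a graded algebra} to $\bigoplus_l I_\bullet^l/I_\bullet^{l+1}$. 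Neither is carried out. Moreover your flatness argument invokes ``the section $s$ being a non-zero-divisor on $R_\bullet$,'' where $R_\bullet=\bigoplus_n H^0(X,\mcL^{\otimes n})$: no such hypothesis is available, since $X$ may be non-reduced and $s$ may vanish on entire components --- handling precisely such $X$ is the point of the paper. (Tellingly, the genuine Rees-algebra construction needs no such hypothesis: $\bigoplus_k I_\bullet^{(k)}\tau^{-k}$ is a subring of $R_\bullet[\tau,\tau^{-1}]$, on which $\tau$ is injective for free.) Finally, the proposition is stated for an arbitrary hyperplane section $Y$, i.e.\ any ideal $I_\bullet$ generated in degree $1$, whereas your construction is only formulated for $Y=\mathrm{div}(s)$ and a single $\GG_m$-scaling; the general case is not addressed.
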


\begin{proof}
See \cite[proposition 3.14]{NI2022}.
\end{proof}

\begin{rem} In particular, $(\PP^k_A,\mathrm{div}(X_0),\mcO_\PP(1))$ is deformed  via the deformation to the projective completion of the cone to $(\PP^k_A,\mcO_\PP(1))$.\\
\end{rem}

\begin{prop}\label{immerpn} Let $X$ be a projective scheme over $\Spec A$. Let $\pi: X\to \PP^{N}_A= \Proj A[x_0,\dots ,x_N]$ be a closed immersion. Let $\mcL = \pi^* \mcO_\PP(1)$. $s= \pi^*x_N \in H^0(X,\mcL)$. \\
Let $(D_sX,\mcL')$ be the deformation to the projective completion of the cone associated to $(X,\mathrm{div}(s),\mcL)$.\\
Then, there is a canonical closed immersion above $\PP^1_A$ from $D_sX$ to the closed subscheme of $\Proj A[x_0,...,x_N,y] \times_A \Proj A[u,t]$ defined by the equation $ty = ux_N$. \\
In particular, this describes a closed immersion $\pi':D_sX_{\infty} \to \Proj A[x_0,...,x_{N-1},y]$ and $\mcL'_{\infty} = \pi'^*\mcO_\PP(1)$
\end{prop}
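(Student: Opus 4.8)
The plan is to deduce the statement from the \emph{functoriality} of the deformation to the projective completion of the cone with respect to closed immersions, reducing to the explicit computation of the deformation of $(\PP^N_A,\mathrm{div}(x_N))$. First I would observe that the closed immersion $\pi\colon X\to\PP^N_A$, together with $\mcL=\pi^*\mcO_\PP(1)$ and $s=\pi^*x_N$, fits the data of the construction, and that the center on $X$ is the pullback of the center on $\PP^N_A$, namely $\mathrm{div}(s)=\pi^{-1}(\mathrm{div}(x_N))$. At the level of graded algebras, the surjection $A[x_0,\dots,x_N]\twoheadrightarrow\bigoplus_n H^0(X,\mcL^{\otimes n})$ induced by $\pi$ carries the ideal $(x_N)$ onto the ideal $I_\bullet$ of $\mathrm{div}(s)$; this induces a surjection of the associated Rees algebras used to build the two deformations, hence a closed immersion $D_sX\to D_{x_N}\PP^N_A$ of the corresponding $\Proj$'s, compatible with the projections to $\PP^1_A=\Proj A[u,t]$. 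I would record this functoriality from \cite{NI2022}, or check it directly from the Rees-algebra description.

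Next I would compute $D_{x_N}\PP^N_A$ explicitly and claim it is exactly the hypersurface $W:=V(ty-ux_N)\subset\Proj A[x_0,\dots,x_N,y]\times_A\Proj A[u,t]$. Since the ideal $(x_N)\subset A[x_0,\dots,x_N]$ is principal and generated by a nonzerodivisor, its Rees algebra is a polynomial extension, so the bigraded coordinate ring of $D_{x_N}\PP^N_A$ is the quotient of $A[x_0,\dots,x_N,y,u,t]$ (with $x_i,y$ in bidegree $(1,0)$ and $u,t$ in bidegree $(0,1)$) by the single relation $ty=ux_N$, where $y$ is the generator carrying $x_N$ into the next Rees degree. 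One then checks directly that the two projections realize $W$ as a family over $\PP^1_A$ whose generic fiber is $\PP^N_A$ (over $t\neq 0$ one has $y=(u/t)x_N$ and $W|_{t\neq 0}\cong\PP^N_A\times\AA^1$) and whose fiber over $\infty=[1:0]$ is $\{x_N=0\}=\Proj A[x_0,\dots,x_{N-1},y]$, matching proposition \ref{associatedamplelinebundle}.

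Composing the two steps gives the asserted canonical closed immersion $D_sX\hookrightarrow W$ over $\PP^1_A$. To read off the fiber over $\infty$ I would set $t=0$: the relation becomes $ux_N=0$, and since $u\neq 0$ on the fiber over $[1:0]$ this forces $x_N=0$, so $W_\infty=\Proj A[x_0,\dots,x_{N-1},y]=\PP^N_A$ and $\pi'\colon D_sX_\infty\to\Proj A[x_0,\dots,x_{N-1},y]$ is the restriction of the embedding. As $\mcL'$ is the pullback of $\mcO(1)$ from the $\Proj A[x_0,\dots,x_N,y]$-factor and the second $\Proj$-coordinate is constant on the fiber over $\infty$, we get $\mcL'_\infty=\mcO_{\PP^{N+1}}(1)|_{\{x_N=0\}}=\pi'^*\mcO_\PP(1)$. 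As a cross-check I would verify the equations of $D_sX$ in $W$ directly: writing a homogeneous generator $f$ of the ideal of $X$ as $f=\sum_k f_k(x_0,\dots,x_{N-1})\,x_N^k$, its bihomogenization $F=\sum_k f_k\,y^k u^{d-k}t^k$ (of bidegree $(d,d)$) restricts to $\lambda^d f$ over $[\lambda:1]$ and to $f_0=f|_{x_N=0}$ over $\infty$, confirming that the generic fiber is $X$ and that the fiber over $\infty$ is cut out by $\{f|_{x_N=0}\}$, i.e.\ the projective completion of the normal cone.

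The hard part will be steps one and two taken together: establishing that the construction is functorial for closed immersions (so that the abstract $D_sX$ maps into the abstract $D_{x_N}\PP^N_A$) and then identifying the latter with the concrete hypersurface $W$. The delicate points are bigraded: I must check that the natural map of bigraded algebras is surjective and behaves well under $\Proj$ (a saturation issue), and confirm that the relation $ty=ux_N$ together with the bihomogenized equations $F$ of $X$ generates the full bihomogeneous ideal rather than a strictly smaller scheme. Once these are settled, the restriction to $\infty$ and the identification of $\mcL'_\infty$ are routine.
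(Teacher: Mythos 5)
The paper itself contains no argument for this statement: its ``proof'' is the citation \cite[propositions 3.9 and 3.16]{NI2022}, so the only meaningful comparison is with the natural argument behind that citation. Your main line---functoriality of the Rees-algebra construction under the graded map $A[x_0,\dots,x_N]\to\bigoplus_n H^0(X,\mcL^{\otimes n})$, followed by the explicit identification of $D_{x_N}\PP^N_A$ with the hypersurface $W=V(ty-ux_N)$ (the blow-up of $\Proj A[x_0,\dots,x_N,y]$ along $\{x_N=y=0\}$)---is exactly the expected route, and your computation of the fibers of $W$ over $1$ and $\infty$ is correct. Two caveats on this main line, one of which you flag yourself: the map $A[x_0,\dots,x_N]\to\bigoplus_n H^0(X,\mcL^{\otimes n})$ is in general surjective only in large degrees (its image is the homogeneous coordinate ring of $X$), and the functoriality and bigraded saturation points are genuinely deferred, so what you have is a correct plan rather than a complete proof.

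Two statements in your write-up are, however, wrong as written. First, the ``cross-check'': the fiber of $D_sX$ over $\infty$ is \emph{not} cut out by $\{f|_{x_N=0}\}$. By proposition \ref{associatedamplelinebundle} it is $\Proj\bigoplus_l I_\bullet^l/I_\bullet^{l+1}$, i.e.\ it is cut out by the \emph{initial forms} of the ideal of $X$ with respect to the $x_N$-adic filtration, not by the restrictions of its generators. Take $X=V(x_N^2)\subset\PP^N_A$ and $s=\pi^*x_N$: your recipe gives $f|_{x_N=0}=0$, hence the whole of $\Proj A[x_0,\dots,x_{N-1},y]$, whereas the actual fiber over $\infty$ is $V(y^2)$. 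This is not a marginal case, since the paper applies the deformation to non-reduced schemes (thickenings of $\PP^l$ occur in the proof of theorem \ref{thprincipalbis}); and even for integral $X$, initial forms of generators need not generate the initial ideal (the standard-basis phenomenon), so the saturation issue you flag is unavoidable, not routine. Second, $\mcL'$ cannot be the pullback of $\mcO(1)$ from the $\Proj A[x_0,\dots,x_N,y]$ factor: $D_sX$ contains $\mathrm{div}(s)\times\PP^1$ (entire fibers of the contraction $W\to\Proj A[x_0,\dots,x_N,y]$), on which that pullback is trivial, contradicting the ampleness of $\mcL'$ asserted in proposition \ref{associatedamplelinebundle}; the correct bundle is $\bigl(\mcO(1)\boxtimes\mcO(1)\bigr)|_{D_sX}$. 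Your conclusion $\mcL'_\infty=\pi'^*\mcO_\PP(1)$ survives, because the two candidates differ by a line bundle pulled back from $\PP^1_A$, which is trivial on each fiber over $\PP^1_A$---but the argument should say this rather than misidentify $\mcL'$.
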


\begin{proof}
See \cite[proposition 3.9 and proposition 3.16]{NI2022}.
\end{proof}

\subsection{The action of $\GG_m$.}
Another important property is the existence of an action of $\GG_m$ on $(D_YX,\mcL')$, that is an equivariant action on $\VV_{D_YX}(\mcL')\to D_YX \to \PP^1_A$ for the natural action of $\GG_m$ on $\PP^1_A$. This action is called \emph{the natural transverse action}.\\
By the invariance of $\infty$ by the natural action of $\GG_m$, the fiber above $\infty$ of $\VV_{D_YX}(\mcL')$ and $ D_YX$ is stable by the action of $\GG_m$.\\

Here is a description of this action on the fiber above $\infty$.
\begin{prop}
Let $X$ be a projective scheme over $\Spec A$. Let $\pi: X\to \PP^{N}_A= \Proj A[x_0,\dots ,x_N]$ be a closed immersion. Let $\mcL = \pi^* \mcO_\PP(1)$. $s= \pi^*x_N \in H^0(X,\mcL)$. \\
Let $(D_sX,\mcL')$ be the deformation to the projective completion of the cone associated to $(X,\mathrm{div}(s),\mcL)$.\\
Let $\pi':D_sX_{\infty} \to \Proj A[x_0,...,x_{N-1},y]$ be the closed immersion given in proposition \ref{immerpn} and $\mcL'_{\infty} = \pi'^*\mcO_\PP(1)$.\\
Then the action of $\GG_m$ on $(D_sX_\infty, \mcL'_{\infty})$ is the restriction of the action of $\GG_m$ on $(\Proj A[x_0,...,x_{N-1},y], \mcO_\PP(1))$ by multiplication on the last variable.
\end{prop}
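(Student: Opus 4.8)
The plan is to transport everything into the explicit model furnished by Proposition \ref{immerpn}, where the statement becomes a substitution. First I would recall that $D_sX$ is realized there as the closed subscheme of $\Proj A[x_0,\dots,x_N,y]\times_A\Proj A[u,t]$ cut out by $ty=ux_N$, with the map to $\PP^1_A=\Proj A[u,t]$ given by the second projection and $\mcL'$ the restriction of $\mcO_\PP(1)$ from the first factor. In these coordinates the point $\infty\in\PP^1_A$ is $[u:t]=[1:0]$: setting $t=0$ in the defining equation forces $ux_N=0$, hence $x_N=0$ on the chart $u\neq 0$ that contains $\infty$, and this is exactly the closed immersion $\pi':D_sX_\infty\to\Proj A[x_0,\dots,x_{N-1},y]$ with $\mcL'_\infty=\pi'^*\mcO_\PP(1)$. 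The fiber $D_sX_\infty$ is stable under the transverse action because $\infty$ is a fixed point of the base action on $\PP^1_A$ and the transverse action lies over it, so restricting to this fiber is legitimate.

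Next I would make the transverse action explicit on this model. By construction it is a $\GG_m$-action on $\VV_{D_sX}(\mcL')$ lying over the standard $\GG_m$-action on $\PP^1_A$, which fixes both $0$ and $\infty$; writing the base action as $\lambda\cdot[u:t]=[u:\lambda t]$, the lift to $\Proj A[x_0,\dots,x_N,y]$ is constrained to preserve the equation $ty=ux_N$. Since the $x_i$ are coordinates pulled back from the fixed ambient $\PP^N_A$, equivariance forces the lift to leave them invariant and to act on the remaining coordinate by $y\mapsto\lambda^{-1}y$ (the opposite normalization $[\lambda u:t]$ gives $y\mapsto\lambda y$, with the same conclusion up to the sign of the weight); being linear in the homogeneous coordinates, this action canonically linearizes $\mcO_\PP(1)$ and hence descends to the total space $\VV(\mcO_\PP(1))$ compatibly with $\mcL'$. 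The one genuine point to verify here, and the step I expect to be the main obstacle, is that this coordinate description really coincides with the abstractly defined natural transverse action of \cite{NI2022}: this requires unwinding the Rees-algebra construction of $D_sX$ and its canonical $\GG_m$-linearization, rather than merely manipulating the embedding $ty=ux_N$.

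Finally, with the action written as $x_i\mapsto x_i$ and $y\mapsto\lambda^{-1}y$ on $\Proj A[x_0,\dots,x_N,y]$, I would restrict to the fiber over $\infty$. There $t=0$ and $x_N=0$, so the ambient space becomes $\Proj A[x_0,\dots,x_{N-1},y]$; the surviving coordinates $x_0,\dots,x_{N-1}$ are fixed while $y$ is scaled, and the compatible linearization on $\mcO_\PP(1)$ carries over. This is precisely the $\GG_m$-action on $(\Proj A[x_0,\dots,x_{N-1},y],\mcO_\PP(1))$ by multiplication on the last variable, which is the claim. Beyond confirming the identification of the transverse action in the second paragraph, the argument is then just this restriction to $t=0$.
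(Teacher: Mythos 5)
There is a genuine gap, and it is precisely the one you flag yourself at the end of your second paragraph. The entire content of the proposition is the identification of the \emph{natural transverse action} --- defined abstractly in \cite{NI2022} via the Rees-algebra construction of $D_sX$ and its canonical $\GG_m$-linearization --- with the concrete action by multiplication on the last variable. Your argument that ``equivariance forces the lift to leave the $x_i$ invariant and to act on the remaining coordinate by $y\mapsto\lambda^{-1}y$'' is not correct: equivariance over $\PP^1_A$ constrains only the projection of the action to the base, and the bihomogeneous ideal $(ty-ux_N)$ is preserved by \emph{any} diagonal lift $x_i\mapsto\lambda^{a_i}x_i$, $y\mapsto\lambda^{b}y$, $u\mapsto u$, $t\mapsto\lambda t$ with $a_N=1+b$. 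For instance $x_i\mapsto\lambda x_i$ for all $i$ together with $y\mapsto y$ is such a lift; on points of $\Proj A[x_0,\dots,x_N,y]$ it agrees with yours, but as an action on $(\Proj A[x_0,\dots,x_N,y],\mcO_\PP(1))$, that is on the total space $\VV_{\PP}(\mcO_\PP(1))$ --- which is what matters for the tube and the bornology --- it differs by a character twist; and a lift such as $x_0\mapsto\lambda^{5}x_0$ (all other weights as in your normalization) does not even agree on points. So uniqueness of the lift fails, and your coordinate description of the transverse action is an assumption rather than a deduction: you have assumed exactly what the proposition asserts.

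To close the gap one must actually unwind the construction of \cite{NI2022}: the natural transverse action arises from the grading of the (extended) Rees algebra, equivalently from a specific weight decomposition of $\bigoplus_n H^0(D_sX,\mcL'^{\otimes n})$ under $\GG_m$, and one must check that under the closed immersion of proposition \ref{immerpn} this grading corresponds to the degree in the variable $y$. That verification is the substance of \cite[proposition 3.17]{NI2022}, which is also all that the paper itself offers as proof (a citation). Your first and third paragraphs --- identifying $D_sX_\infty$ inside $\{t=0\}$, noting its stability under the transverse action because $\infty$ is fixed on the base, and restricting a given coordinate action to that fiber --- are correct, but they are the routine part of the argument; the proposal as written does not prove the statement.
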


\begin{proof}
See \cite[proposition 3.17]{NI2022}.
\end{proof}

Furthermore, the decomposition of the total space of section of $\mcL'$ into its isotypic components under the natural transverse action of $\GG_m$ explains that this construction can be seen as a geometric interpretation of the "dévissage" technique. See the proposition 3.20 of \cite{NI2022} for further details. \\

As a direct corollary of this decomposition, we have the invariance of the Hilbert functions through the deformation: 

\begin{cor}\label{invariancepardeformation}
Let $X$ be a projective scheme over a field $k$. Let $\mcL$ be a line bundle over $X$ very ample over $\Spec k$. Let $i:Y\to X$ be a hyperplane section with respect to $\mcL$.\\
Let $(D_YX, \mcL')$ be the deformation to the projective completion of the cone associated to $(X,Y,\mcL)$.\\
Then, $(X,\mcL)$ and $(D_YX_\infty , \mcL')$ have the same Hilbert functions. That is, for $n\gg 0$, $$\dim_k H^0(X,\mcL^{\otimes n}) = \dim_k H^0(D_YX_\infty,\mcL'^{\otimes n}) $$
\end{cor}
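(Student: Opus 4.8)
The plan is to use the explicit description of the fiber over $\infty$ supplied by Proposition \ref{associatedamplelinebundle} and to reduce the equality of Hilbert functions to an elementary telescoping identity. Write $R_\bullet = \bigoplus_n H^0(X,\mcL^{\otimes n})$ for the section ring, so that $H^0(X,\mcL^{\otimes n}) = R_n$, and let $I_\bullet \subset R_\bullet$ be the homogeneous ideal cutting out $Y$, which is generated in degree $1$ by the hyperplane section hypothesis. By Proposition \ref{associatedamplelinebundle}, the fiber over $\infty$ is $(D_YX_\infty, \mcL'_\infty) = (\Proj S, \mcO(1))$ with $S := \bigoplus_{l\ge 0} I_\bullet^l/I_\bullet^{l+1}$. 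It therefore suffices to compute $\dim_k H^0(\Proj S, \mcO(n))$ for $n \gg 0$ and to compare it with $\dim_k R_n$.

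First I would pin down the relevant grading on $S$. The algebra $S$ is naturally bigraded: a class in $I_\bullet^l/I_\bullet^{l+1}$ carries both a \emph{cone degree} $l$ and the original degree $m$ inherited from $R_\bullet$, and writing $(I_\bullet^l)_n := I_\bullet^l \cap R_n$ one has $(I_\bullet^l/I_\bullet^{l+1})_n = (I_\bullet^l)_n/(I_\bullet^{l+1})_n$. Proposition \ref{associatedamplelinebundle} asserts that $S$ is generated in degree $1$ and that $\mcO(1)$ is its canonical bundle, the grading in question being the total (original) degree $m$ rather than the cone degree $l$; this is confirmed by the model computation in the Remark following Proposition \ref{associatedamplelinebundle}, where $(\PP^k_k, \mathrm{div}(X_0))$ yields $S \cong k[X_0,\dots,X_k]$ with $\Proj S \cong \PP^k_k$. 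Since $S$ is a finitely generated graded $k$-algebra generated in degree $1$, its section ring agrees with $S$ in large degree, so for $n \gg 0$ we have $\dim_k H^0(D_YX_\infty, \mcL'^{\otimes n}) = \dim_k S_n = \sum_{l\ge 0}\dim_k\big((I_\bullet^l)_n/(I_\bullet^{l+1})_n\big)$.

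It then remains to evaluate this sum. For each fixed $n$ the chain $R_n = (I_\bullet^0)_n \supseteq (I_\bullet^1)_n \supseteq (I_\bullet^2)_n \supseteq \cdots$ is a finite decreasing filtration of the finite-dimensional space $R_n$: indeed $I_\bullet$ generated in degree $1$ gives $I_\bullet^l \subseteq R_{\ge l}$, so $(I_\bullet^l)_n = 0$ once $l > n$. Hence the sum telescopes,
$$\sum_{l\ge 0}\Big(\dim_k (I_\bullet^l)_n - \dim_k (I_\bullet^{l+1})_n\Big) = \dim_k (I_\bullet^0)_n = \dim_k R_n = \dim_k H^0(X,\mcL^{\otimes n}),$$
which is the claimed equality. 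Conceptually this is nothing but the isotypic decomposition of $H^0(D_YX_\infty, \mcL'^{\otimes n})$ under the natural transverse $\GG_m$-action (Proposition 3.20 of \cite{NI2022}), the weight-$l$ piece being $(I_\bullet^l)_n/(I_\bullet^{l+1})_n$; summing over weights recovers $\dim_k R_n$.

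I expect the only genuine subtlety — the main obstacle — to be the bookkeeping of the grading: one must verify that the grading of $S$ defining $\mcO(1)$ is the total degree $m$ and not the cone degree $l$, and that passing from the graded piece $S_n$ to $H^0(\Proj S, \mcO(n))$ introduces a discrepancy only in bounded degree (whence the hypothesis $n \gg 0$ and the appeal to saturation). Everything else is formal once Proposition \ref{associatedamplelinebundle} identifies the fiber over $\infty$ together with its line bundle.
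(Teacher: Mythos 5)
Your argument is correct, and it reaches the conclusion by a more self-contained route than the paper does. The paper proves this corollary by simply citing \cite[proposition 3.20]{NI2022}: the sections of powers of $\mcL'$ on the fiber at infinity decompose into isotypic components under the natural transverse $\GG_m$-action, the weight-$l$ component of $H^0(D_YX_\infty,\mcL'^{\otimes n})$ being $(I_\bullet^l)_n/(I_\bullet^{l+1})_n$, and summing over $l$ telescopes to $\dim_k H^0(X,\mcL^{\otimes n})$. You produce the same decomposition while bypassing the equivariant machinery entirely: your only geometric input is Proposition \ref{associatedamplelinebundle} (the fiber at infinity is $\Proj S$ for $S=\bigoplus_l I_\bullet^l/I_\bullet^{l+1}$ with its $\mcO(1)$), combined with the standard comparison $S_n\cong H^0(\Proj S,\mcO(n))$ for $n\gg 0$ --- legitimate here since $S$ is generated in degree $1$ by that proposition and $S_0=R_0$, $S_1$ are finite-dimensional, so $S$ is a finitely generated graded $k$-algebra --- and the telescoping of the finite filtration $R_n=(I_\bullet^0)_n\supseteq (I_\bullet^1)_n\supseteq\cdots\supseteq(I_\bullet^{n+1})_n=0$, where $R_\bullet=\bigoplus_n H^0(X,\mcL^{\otimes n})$ as in your notation. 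What your route buys is elementarity: beyond the identification of the special fiber, everything is classical graded-module theory. Its cost is exactly the grading bookkeeping you flag as the crux: Proposition \ref{associatedamplelinebundle} does not spell out that the grading defining $\mcO(1)$ is the total degree inherited from $R_\bullet$ rather than the cone degree $l$, and your resolution (the cone grading could not yield the fiber of a scheme projective over $\PP^1_k$, and the model case $(\PP^k_k,\mathrm{div}(X_0))$ confirms the total degree) is the right one. What the paper's equivariant formulation buys is that it is the statement which survives in the arithmetic half of the paper, where the same transverse action, restricted to $U(1)$ on the analytic tube, drives the conservation of the arithmetic Hilbert invariants in proposition \ref{invarianceArithmeticHilbert}; as you note yourself, your telescoping and the isotypic decomposition are the identical computation read in two languages.
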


\begin{proof}
See \cite[proposition 3.20]{NI2022}.
\end{proof}

\subsection{Compatibility with other $\GG_m$-actions.} Let consider deformations to the projective completion of the cone associated to a global section of $\mcL$. Thanks to the proposition \ref{immerpn}, we can keep track of closed immersions to $\PP^N_A$. We deform  $(\pi:X\to \PP^N_A, \mathrm{div}(s), \mcL=\pi^*\mcO_\PP(1))$ into $(\pi':D_sX_{\infty}\to \PP^N_A, \pi'^*\mcO_\PP(1))$. One the key feature is that the natural transverse action of $\GG_m$ on $(D_sX, \mcL')$ adds an action of $\GG_m$ and that $D_sX_\infty$ inherits the existing ones on $(X,\mcL)$:\\

\begin{prop}\label{CombinaisonActionGm}
Let $X$ be a projective scheme over $\Spec A$, $\mcL$ be a line bundle over $X$ very ample over $\Spec A$.\\
Let $\pi :X \to \PP^N_A = \Proj A[x_0, \dots , x_N]$ be a closed immersion induced by global sections of $\mcL$.
Let $s = \pi^*x_N$.\\
Let $(D_sX, \mcL')$ be the deformation to the projective completion of the cone associated to $(X,\mathrm{div}(s),\mcL)$.\\
Assume that $\GG_m$ acts on $\VV_{\PP^N_A}(\mcO_\PP(1))$ by multiplication on the first variable $x_0$ of $\PP^{N}_A$ and that this action restricts to $\VV_{X}(\mcL)$. \\
Then, this action naturally extends to $(D_sX,\mcL')$.\\
Moreover, $D_sX_\infty$ is stable by this action, and if $\pi':D_sX_{\infty} \to \Proj A[x_0,...,x_{N-1},y]$ is the closed immersion given in proposition \ref{immerpn} and $\mcL'_{\infty} = \pi'^*\mcO_\PP(1)$. Then the corresponding action of $\GG_m$ on $(D_sX_\infty, \mcL'_{\infty})$ is the restriction of the action of $\GG_m$ on $(\Proj A[x_0,...,x_{N-1},y], \mcO_\PP(1))$ by multiplication on the first variable.
\end{prop}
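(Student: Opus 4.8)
The plan is to carry the whole argument through the explicit embedding of Proposition \ref{immerpn}, which realizes $D_sX$ as a closed subscheme of $\Proj A[x_0,\dots,x_N,y]\times_A \Proj A[u,t]$ cut out (along with the deformed equations of $X$) by $ty=ux_N$, with $\infty = [u:t]=[1:0]$, where $x_N=0$ and $D_sX_\infty \hookrightarrow \Proj A[x_0,\dots,x_{N-1},y]$ via $\pi'$. First I would write down the candidate extended action on the ambient product, letting $\GG_m$ act by $\mu\cdot(x_0,x_1,\dots,x_N,y;u,t)=(\mu x_0,x_1,\dots,x_N,y;u,t)$, i.e.\ scaling only $x_0$ and fixing $x_1,\dots,x_N$, the new coordinate $y$, and the $\PP^1_A$-coordinates $u,t$. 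This visibly preserves the equation $ty=ux_N$ since no term involves $x_0$, and it lifts tautologically to $\mcL'=\pi'^*\mcO_\PP(1)$ through the linear action on the coordinates, giving the required equivariant action on $\VV_{D_sX}(\mcL')\to D_sX\to\PP^1_A$.

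The conceptual reason this candidate is forced is that the given $x_0$-action on $\PP^N_A$ fixes the section $s=x_N\in H^0(X,\mcL)$ and preserves $X$ by hypothesis, so the entire deformation datum $(X\hookrightarrow \PP^N_A, s)$ is $\GG_m$-equivariant; since $(D_sX,\mcL')$ is built functorially from this datum via the Rees construction, the action descends. Concretely I would argue at the level of the Rees algebra: writing $R_\bullet=\bigoplus_n H^0(X,\mcL^{\otimes n})$ as a $\GG_m$-equivariant graded quotient of $A[x_0,\dots,x_N]$, the $x_0$-scaling endows $R_\bullet$ with an auxiliary $\ZZ$-grading compatible with the surjection, and $s=x_N$ is a $\GG_m$-invariant element of $R_1$. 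Hence the ideal $I_\bullet=sR_\bullet$, all its powers, and every subquotient $I_\bullet^l/I_\bullet^{l+1}$ inherit this grading, which produces the $\GG_m$-action on $\Proj\bigoplus_l I_\bullet^l/I_\bullet^{l+1}=D_sX_\infty$ (Proposition \ref{associatedamplelinebundle}) and, over the full $\PP^1_A$, on $D_sX$. Because the action fixes $u$ and $t$ it is fiberwise over $\PP^1_A$, so it commutes with the natural transverse action, which scales $y$ and fixes $x_0,\dots,x_{N-1}$; the two actions scale disjoint coordinates, which is precisely what will later let one assemble a $U(1)^{l+1}$-action.

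Finally, since the action fixes $u,t$ it preserves the fiber $D_sX_\infty$ over $\infty=[1:0]$, where $x_N=0$; restricting through $\pi'$ the action reads $x_0\mapsto \mu x_0$ with $x_1,\dots,x_{N-1},y$ fixed, which is exactly multiplication on the first variable of $\Proj A[x_0,\dots,x_{N-1},y]$, with the evident linearization of $\mcL'_\infty=\pi'^*\mcO_\PP(1)$. The main obstacle is the point I flagged in the second paragraph: verifying that the candidate action preserves $D_sX$ with its full (possibly non-reduced) scheme structure, rather than merely the ambient product and the single equation $ty=ux_N$. The clean resolution is the Rees-grading argument above, which shows the entire graded algebra defining the deformation is $\GG_m$-stable; everything else — compatibility over $\PP^1_A$, commutation with the transverse action, and the coordinate identification at $\infty$ — then follows formally.
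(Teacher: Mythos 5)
Your proposal is correct, and there is essentially nothing in the paper to compare it against: the paper's entire proof of this proposition is the citation to \cite[corollary 3.23]{NI2022}, so your argument supplies a proof that the paper itself omits. Your route — put the evident $x_0$-scaling on the ambient $\Proj A[x_0,\dots,x_N,y]\times_A \Proj A[u,t]$ of Proposition \ref{immerpn}, then show it preserves the full scheme structure of $D_sX$ — is the natural one in this framework, and you correctly isolate the only real issue: invariance of the single visible equation $ty=ux_N$ is not enough, since $D_sX$ carries further (possibly non-reduced) defining equations. Your Rees-algebra resolution is sound: the hypothesis that the action restricts to $\VV_X(\mcL)$ says exactly that the kernel of $A[x_0,\dots,x_N]\to \bigoplus_n H^0(X,\mcL^{\otimes n})$ is homogeneous for the auxiliary $x_0$-grading, and $s=x_N$ has $x_0$-degree zero, so the ideal $I_\bullet$, its powers, the Rees-type algebra defining $D_sX$ over $\PP^1_A$, and the associated graded $\bigoplus_l I_\bullet^l/I_\bullet^{l+1}$ (the fiber at infinity, by Proposition \ref{associatedamplelinebundle}) all inherit the grading, which is precisely a $\GG_m$-action extending the given one; since the action fixes $u,t$ it is an action over $\PP^1_A$ and so preserves the fiber at $\infty$, and the identification through $\pi'$ follows because the surjection $A[x_0,\dots,x_{N-1},y]\to\bigoplus_l I_\bullet^l/I_\bullet^{l+1}$ sends $x_0$ to a class of $x_0$-degree one and $x_1,\dots,x_{N-1},y$ (the latter going to the class of $s$) to classes of $x_0$-degree zero. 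One harmless notational slip: on all of $D_sX$ the bundle $\mcL'$ is the restriction of $\mcO_\PP(1)$ from the factor $\Proj A[x_0,\dots,x_N,y]$ of the ambient product, not $\pi'^*\mcO_\PP(1)$, which only makes sense on the fiber at infinity; your lift of the action to $\VV_{D_sX}(\mcL')$ is still automatic, since a linear action on homogeneous coordinates canonically linearizes $\mcO_\PP(1)$. Your remark that the extended action commutes with the natural transverse action (which scales $y$) is not needed for this proposition, but it correctly anticipates the content of Corollary \ref{CombinaisonActionGmU1}.
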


\begin{proof}
See \cite[corollary 3.23]{NI2022}.
\end{proof}

Bearing in mind the convenient proposition \ref{reducedcaseBornology}, some actions of $\GG_m$ carries out to the underlying reduced subspace.

\begin{prop}\label{passageaureduit}
Let $X$ be a projective scheme over $\Spec A$. Let $\mcL$ be a line bundle over $X$ very ample over $\Spec A$. Assume that the global sections of $\mcL$ induce a closed immersion $\pi :X \to \PP^N_A = \Proj A[x_0, \dots , x_N]$.\\
Suppose that the action of $\GG_m$ on $\PP^N_A$ by multiplication on $x_0$ restricts to $X$. Then this action restricts to the underlying reduced space $X_{red}$.
\end{prop}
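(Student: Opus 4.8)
The plan is to use the universal property of the reduced subscheme together with the fact that $\GG_m$ is geometrically reduced, so that reducedness of $X_{red}$ survives after forming the product with $\GG_m$. First I would record the input: the hypothesis furnishes an action morphism $a : \GG_m \times_A X \to X$ restricting the given linear action on $\PP^N_A$, and we have the canonical closed immersion $\iota : X_{red} \to X$. The goal is to produce an action morphism $a_{red} : \GG_m \times_A X_{red} \to X_{red}$ compatible with $a$.

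The key step is to check that $\GG_m \times_A X_{red}$ is reduced. Since $\GG_m = \Spec A[t,t^{-1}]$, over any affine open $\Spec R \subseteq X_{red}$ with $R$ reduced the product is $\Spec R[t,t^{-1}]$; the polynomial ring $R[t]$ over a reduced ring is reduced and localization preserves reducedness, so $R[t,t^{-1}]$ is reduced, and gluing gives that $\GG_m \times_A X_{red}$ is reduced. This argument is uniform in the base, working equally for $A = k$ and $A = \ZZ$. Given this, the composite $a \circ (\mathrm{id}_{\GG_m} \times \iota) : \GG_m \times_A X_{red} \to X$ is a morphism out of a reduced scheme, hence factors uniquely through $X_{red}$ by the universal property of the reduced subscheme, yielding $a_{red}$ with $\iota \circ a_{red} = a \circ (\mathrm{id}_{\GG_m} \times \iota)$.

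Finally I would verify that $a_{red}$ satisfies the group-action axioms, namely the identity and associativity constraints. Each is a commutative-diagram condition, and in both cases the corresponding diagram already commutes after post-composition with $\iota$ (because the analogous diagram holds for the ambient action $a$); since $\iota$ is a monomorphism, being a closed immersion, the diagrams commute before post-composition as well. The only point requiring genuine care is the reducedness of $\GG_m \times_A X_{red}$ over a non-field base such as $\ZZ$, but the explicit description $\GG_m = \Spec A[t,t^{-1}]$ reduces even this to the elementary stability of reducedness under polynomial extension and localization, so no serious obstacle arises.
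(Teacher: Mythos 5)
Your proof is correct, but it takes a genuinely different route from the paper. The paper argues entirely on the level of homogeneous coordinate rings: the kernel of the restriction map $A[x_0,\dots,x_N]\to\bigoplus_n H^0(X,\mcL^{\otimes n})$ is an ideal that is graded both for total degree and for the degree in $x_0$ (the latter grading encodes $\GG_m$-stability), and since the radical of a graded ideal is again graded, the radical ideal — which cuts out $X_{red}$ — is also graded in $x_0$, so $X_{red}$ is stable. Your argument instead is functorial: you never touch coordinates, but use that $\GG_m\times_A X_{red}$ is reduced (correctly reduced, via the explicit description $\Spec R[t,t^{-1}]$, to stability of reducedness under polynomial extension and localization — this is the right way to sidestep the general failure of reducedness under fiber products) together with the universal property of the reduction, and then transfer the action axioms along the monomorphism $\iota$. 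Your approach is more general: it applies to any action of a group scheme whose product with a reduced scheme stays reduced (e.g.\ any smooth affine group scheme), with no projective embedding or linearization needed. What the paper's approach buys in exchange is an explicit statement about the bihomogeneous ideal of $X_{red}$, which is the form actually exploited later in the proof of the main theorem, where the kernel being generated by monomials is used to identify the irreducible components of the degenerate fiber as thickenings of linear subspaces.
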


\begin{proof}
Consider the restriction to $X$ map: $A[x_0, \dots, x_N] \to \bigoplus H^0(X,\mcL^{\otimes n})$. This is a graded map for the graduation by the total degree of polynomials and for the graduation of $x_0$. The kernel of this map is a graded ideal for those two graduation and the radical of a graded  ideal is still graded. This concludes.\\
\end{proof}

\subsection{The deformation of analytic tubes.}

The second main feature of this deformation is that we can associate to it a deformation of analytic tubes. 
More precisely, the data is the following :

\begin{itemize}
    \item A projective scheme $X$ over $\Spec \ZZ$.
    \item A seminormed line bundle $\overline{\mcL}$ over $X$ such that the underlying line bundle $\mcL$ is very ample over $\Spec \ZZ$.
    \item  $i : Y \to X$ a hyperplane section with respect to $\mcL$.
\end{itemize}

Then the deformation to the projective completion of the cone associated to $(X,Y,\overline{\mcL})$ is denoted $(D_YX, \overline{\mcL'})$, where $(D_YX, \mcL')$ is the deformation to the projective completion of the cone associated to $(X,Y,\mcL)$ and $\mcL'$ is endowed with a structure of seminormed line bundle.\\

It verifies the following properties :

\begin{prop}
Let $X$ be a projective scheme over $\Spec \ZZ$. Let $\overline{\mcL}$ be a semipositive seminormed line bundle over $X$, very ample over $\Spec \ZZ$. Let $i:Y\to X$ be a hyperplane section with respect to $\mcL$. \\
Let $(D_YX,\overline{\mcL'})$ the deformation to the projective completion of the cone associated to $(X,Y,\overline{\mcL})$.
\begin{itemize}
    \item $\overline{\mcL'}$ is semipositive.
    \item the fiber over $1$ of $(D_YX, \overline{\mcL'})$ is $(X,\overline{\mcL})$.
\end{itemize}
\end{prop}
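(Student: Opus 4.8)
The plan is to treat the two assertions separately, deriving the purely geometric content from Proposition \ref{associatedamplelinebundle} and concentrating the real work on the behaviour of the analytic tube. For the second bullet, the underlying geometric statement — that the fibre of $(D_YX,\mcL')$ over $1\in\PP^1_\ZZ$ is $(X,\mcL)$ — is exactly one of the conclusions of Proposition \ref{associatedamplelinebundle}. What remains is to check that the tube $T'$ defining the seminormed structure on $\mcL'$ restricts over $1$ to the tube $T$ of $\overline{\mcL}$. I would do this by unwinding the explicit construction of $T'$ from \cite{NI2022}: over the open locus $\GG_m\subset\PP^1_\ZZ$ the natural transverse action identifies the family with the constant family on $(X,\overline{\mcL})$ up to the scaling by the $\GG_m$-action, so specializing the scaling parameter to $1$ returns $T$ verbatim and the fibre $T'$ over $1$ is $T$.

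For the first bullet I would split semipositivity into its two defining conditions from Definition \ref{definitionAmpleLineBundleHolomorphicallyConvex}: that $\VV_{D_YX}(\mcL')$ be a modification over $\Spec\ZZ$ of an affine scheme, and that $T'$ be a holomorphically convex compact in $\VV_{D_YX}(\mcL')(\CC)$. The first condition is immediate, since Proposition \ref{associatedamplelinebundle} asserts that $\mcL'$ is ample over $\Spec\ZZ$, whence Grauert's criterion (EGA II, \cite[Chapter 2, Sections 8.8 to 8.10]{grothendieck}) yields that $\VV_{D_YX}(\mcL')$ is a modification of an affine scheme. Thus the entire content of the first bullet reduces to the holomorphic convexity of $T'$.

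To establish holomorphic convexity I would exploit the embedding of Proposition \ref{immerpn}, which in the divisorial case $Y=\mathrm{div}(s)$ realizes $D_YX$ as a closed subscheme of $\Proj\ZZ[x_0,\dots,x_N,y]\times_\ZZ\PP^1_\ZZ$ cut out by $ty=ux_N$. Passing to total spaces, this presents $\VV_{D_YX}(\mcL')(\CC)$ as a closed analytic subset of the ambient total space $\VV_{\PP^{N+1}}(\mcO(1))(\CC)\times\PP^1(\CC)$. The strategy is to produce on this ambient space a holomorphically convex compact $\widetilde{T}$ — built from the standard tube on $\VV_{\PP^{N+1}}(\mcO(1))$ twisted by the weight transported from the semipositive bundle $\overline{\mcL}$ along the deformation — whose trace on $\VV_{D_YX}(\mcL')(\CC)$ is precisely $T'$. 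Holomorphic convexity of $T'$ would then follow from the stability of holomorphic convexity under intersection with a closed analytic subset.

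The hard part will be verifying that $\widetilde{T}$ is itself holomorphically convex, equivalently that the transported weight remains plurisubharmonic across the whole deformation. Over $\GG_m\subset\PP^1_\ZZ$ this is automatic, since there the family is the $\GG_m$-translate of the semipositive datum $\overline{\mcL}$ and holomorphic convexity is preserved by the transverse action; the genuine difficulty is concentrated at the fibre over $\infty$, where the geometry degenerates to the projective completion of the cone $\Proj\bigoplus_\ell I_\bullet^\ell/I_\bullet^{\ell+1}$. There one must check that the limiting tube does not lose holomorphic convexity as the parameter tends to $\infty$ — this is exactly where the semipositivity hypothesis on $\overline{\mcL}$ is used, and where I would invoke the detailed analysis of the limit tube carried out in \cite{NI2022}.
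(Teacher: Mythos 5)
Your proposal has a genuine gap, and it sits exactly where the content of the proposition sits. The easy reductions you make are fine: Proposition \ref{associatedamplelinebundle} does give that the fibre of $(D_YX,\mcL')$ over $1$ is $(X,\mcL)$ and that $\mcL'$ is ample, so by Grauert's criterion the condition that $\VV_{D_YX}(\mcL')$ be a modification of an affine scheme is free, and your intersection lemma (a holomorphically convex compact of the ambient space traces a holomorphically convex compact on any closed analytic subset, since restricting functions can only shrink hulls) is correct. But everything beyond that --- the holomorphic convexity of $T'$ and the identification of its fibre over $1$ with $T$ --- is precisely what the proposition asserts, and your proposal ends by deferring exactly these points to ``the detailed analysis of the limit tube carried out in \cite{NI2022}''. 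The paper's own proof is nothing but the citation of \cite[definition 3.25 and proposition 3.26]{NI2022}: the seminormed structure $\overline{\mcL'}$ (the tube $T'$) is not even defined in this paper, only there. So your argument, read as a proof, reduces the proposition to itself; it supplies scaffolding around the hard step without performing it.

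Moreover, the construction you sketch for that hard step would fail as described. Your ambient space is $V\times\PP^1(\CC)$ with $V=\VV_{\PP^{N+1}_\ZZ}(\mcO_\PP(1))(\CC)$; since $\PP^1(\CC)$ is compact and connected, every holomorphic function on this product is pulled back from $V$, so the holomorphic hull of any compact $K$ equals $\widehat{p_V(K)}\times\PP^1(\CC)$, and the only holomorphically convex compacts of the ambient space are products $S\times\PP^1(\CC)$ with $S$ holomorphically convex in $V$. A compact ``twisted by the weight transported along the deformation'', i.e.\ genuinely varying over $\PP^1$, can therefore never be holomorphically convex there: if a trace argument is to work, $\widetilde{T}$ must be a constant product tube, with all of the degeneration coming from the varying equations $ty=ux_N$ of $\VV_{D_sX}(\mcL')(\CC)$ inside the ambient space --- a different argument from the one you outline. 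For the same reason, the split ``automatic over $\GG_m$, hard only at $\infty$'' is not meaningful: holomorphic convexity is a global condition tested against globally defined functions and cannot be verified over pieces of the base separately. Finally, your treatment of the second bullet glosses over a real point: if $T'$ is produced by a hull-type construction (which is what makes semipositivity of $\overline{\mcL'}$ plausible at all), then the assertion that its fibre over $1$ is $T$ ``verbatim'', with no enlargement caused by the hull, is nontrivial and is where the semipositivity of $\overline{\mcL}$ must be used; it is not a formal specialization of a scaling parameter.
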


\begin{proof}
See \cite[definition 3.25 and proposition 3.26]{NI2022}.\\
\end{proof}

\subsection{Actions of $U(1)$ on the analytic tube.} One key feature of this definition of the seminormed structure on $\mcL'$ is that the analytic tube on $\mcL'$ is compatible with the action of $\GG_m$, in the sense that it is stable by the natural transverse action of $U(1)$, the unit complex numbers. For further details, see \cite[proposition 3.28]{NI2022}.\\

Now, as in proposition \ref{CombinaisonActionGm}, some actions of $(\GG_m,U(1))$ on $(X,\overline{\mcL})$ extends to $(D_sX,\overline{\mcL'})$ and restricts to $(D_sX_\infty,\overline{\mcL'_\infty})$.

\begin{cor}\label{CombinaisonActionGmU1}
Let $X$ be a projective scheme over $\Spec \ZZ$. Let $\overline{\mcL} = (\VV_X(\mcL), T)$ be a seminormed line bundle on $X$ such that the underlying line bundle is very ample over $\Spec \ZZ$.\\
Let $\pi :X \to \PP^N_\ZZ = \Proj \ZZ[x_0, \dots , x_N]$ be a closed immersion induced by the global sections of $\mcL$.
Let $s = \pi^*x_N$.\\
Let $(D_sX, \overline{\mcL'}) = (D_sX, (\VV_{D_sX}(\mcL'), T'))$ be the deformation to the projective completion of the cone associated to $(X, \mathrm{div}(s),\overline{\mcL})$.\\
Assume that $\GG_m$ acts on $\VV_{\PP^N_\ZZ}(\mcO_\PP(1))$ by multiplication on the first variable $x_0$ of $\PP^{N}_\ZZ$, that this action restricts to $\VV_{X}(\mcL)$ and that $T$ is stable by $U(1)$. \\
Then, this action naturally extends to $\VV_{D_sX}(\mcL')$ and $T'$ is stable by the restriction of this action to $U(1)$.\\
At the fiber over infinity, there is a closed immersion $\pi' : D_sX_\infty \to \PP^N_\ZZ = \Proj \ZZ[x_0,\dots,x_{N-1},y]$ equivariant for the action of $\GG_m$ by the multiplication on the first variable. \\
Moreover, the natural transverse action of $\GG_m$ on $D_sX$ at the fiber at infinity is the restriction through $\pi'$ of the multiplication on last variable of $\PP^N_\ZZ$.\\
Furthermore, if $\overline{\mcL'_\infty} = (\VV_{D_sX_\infty}(\mcL'_\infty), T'')$ is the restriction of $\overline{\mcL'}$ to the fiber at infinity, then $T''$ is stable by the restriction of this two actions to $U(1)$.
\end{cor}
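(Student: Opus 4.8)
The plan is to read this corollary as a synthesis of the purely scheme-theoretic Proposition \ref{CombinaisonActionGm} with the compatibility of the deformed analytic tube and the natural transverse $U(1)$-action recalled above from \cite[proposition 3.28]{NI2022}; the one genuinely new ingredient is that the \emph{pre-existing} $U(1)$-action, coming from multiplication on $x_0$, also preserves the deformed tube $T'$. Everything else is bookkeeping.

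First I would invoke Proposition \ref{CombinaisonActionGm} directly. This yields at once the extension of the $\GG_m$-action from $\VV_X(\mcL)$ to $\VV_{D_sX}(\mcL')$, the stability of $D_sX_\infty$, the equivariant closed immersion $\pi' : D_sX_\infty \to \Proj \ZZ[x_0,\dots,x_{N-1},y]$ with $\mcL'_\infty = \pi'^*\mcO_\PP(1)$, and the identification of the natural transverse action at infinity with multiplication on the last variable. Hence all assertions concerning the scheme $D_sX$ and the underlying line bundle $\mcL'$, including those at the fiber over infinity, require no additional work.

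It then remains to establish the two $U(1)$-stabilities of $T'$. Stability under the natural transverse $U(1)$-action is exactly \cite[proposition 3.28]{NI2022}, so only the $x_0$-action needs attention. Here I would argue by naturality of the tube construction: the seminormed structure on $\mcL'$, and thus $T'$, is built in \cite[definition 3.25]{NI2022} functorially from $\overline{\mcL}$ and the embedding data. The hypothesis that multiplication on $x_0$ restricts to $\VV_X(\mcL)$ and that $T$ is stable under the associated $U(1)$ says precisely that this action acts by isometries on $\overline{\mcL}$ over the fiber at $1$. Since this $\GG_m$-action commutes with the natural transverse action — both being induced by the embedding of $D_sX$ into $\Proj \ZZ[x_0,\dots,x_N,y]\times_\ZZ \Proj \ZZ[u,t]$ of Proposition \ref{immerpn}, in which $x_0$ does not enter the defining equation $ty = ux_N$ — the extended $\GG_m$-action is an automorphism of the full deformation datum restricting to an isometry on the fiber at $1$. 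As the equivariant construction of $T'$ transports isometries to isometries, the restriction of this action to $U(1)$ preserves $T'$.

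Finally, at the fiber over infinity, $T''$ is the restriction of $T'$ to $\VV_{D_sX_\infty}(\mcL'_\infty)$, hence stable under both $U(1)$-actions, each of which preserves $T'$ and stabilises the fiber over $\infty$; their identification with multiplication on the first and last variables of $\Proj \ZZ[x_0,\dots,x_{N-1},y]$ is read off from Proposition \ref{CombinaisonActionGm} together with the transverse-action description. The main obstacle is precisely the naturality step of the third paragraph: one must verify that the $x_0$-action is genuinely compatible with the explicit construction of the deformed seminorm, which ultimately reduces to the commutation of the two $\GG_m$-actions on the ambient space, transparent from the equation $ty = ux_N$ since $x_0$ plays no role in it.
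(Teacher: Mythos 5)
You should know at the outset that the paper itself offers no argument for this corollary: its proof is the single citation ``See \cite[corollary 3.31]{NI2022}'', the statement being recalled in the appendix precisely so it can be invoked in the main text. So the comparison is against what such a proof must contain, not against a written argument. Your reduction of the scheme-theoretic assertions to Proposition \ref{CombinaisonActionGm} is correct and complete as far as it goes: the extension of the $\GG_m$-action to $\VV_{D_sX}(\mcL')$, the stability of $D_sX_\infty$, the equivariant immersion $\pi'$, and the identification of the natural transverse action at infinity with multiplication on the last variable are all literally contained in that proposition. Likewise, stability of $T'$ under the natural transverse $U(1)$-action is exactly what the paper recalls from \cite[proposition 3.28]{NI2022}.

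The gap is in your third paragraph, and you half-acknowledge it yourself. Stability of $T'$ under the pre-existing $U(1)$-action (multiplication on $x_0$) is the only assertion of the corollary that is not already quoted elsewhere in the paper, and your argument for it is an appeal to the claim that the construction of the deformed tube in \cite[definition 3.25]{NI2022} is functorial and ``transports isometries to isometries''. That construction is not reproduced in this paper, and its equivariance under automorphisms of the input datum $(X \subset \PP^N_\ZZ,\, s,\, T)$ is precisely the content one must prove: asserting it amounts to restating the corollary, not proving it. The commutation of the two $\GG_m$-actions, which you correctly read off from the equation $ty = ux_N$, is necessary but not sufficient; what one actually needs is the explicit description of $T'$ in terms of $T$ (roughly, that $T'$ is obtained by transporting $T$ along the transverse action over $\PP^1$ and passing to a closure or holomorphically convex hull), from which stability under any isometric automorphism of $(X, s, \overline{\mcL})$ commuting with the transverse action would indeed follow. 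Since that description lives only in \cite{NI2022}, a self-contained proof must unfold it; as written, your proposal reduces the corollary to an unverified property of an external construction --- which is, in effect, what the paper's one-line citation does as well, but a citation does not claim to be a proof.
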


\begin{proof}
See \cite[corollary 3.31]{NI2022}.\\
\end{proof}

\subsection{Conservation of the arithmetic Hilbert-Samuel invariants by the deformation to the projective completion of the cone.} The deformation to the projective completion of the cone give a deformation from $(X,\overline{\mcL})$ to $(D_YX_\infty, \overline{\mcL'})$. As an analog of the conservation of the Hilbert functions in corollary \ref{invariancepardeformation}, we have the conservation of the arithmetic Hilbert invariants:

\begin{prop}\label{invarianceArithmeticHilbert}Let $X$ be a reduced projective scheme over $\Spec \ZZ$. Let $\overline{\mcL}=(\VV_X(\mcL),T)$ be a semipositive seminormed line bundle over $X$, very ample over $\Spec \ZZ$. Assume furthermore that the metric is uniformly definite on $\overline{\mcL}$. Let $i:Y\to X$ be a hyperplane intersection with respect to $\mcL$.\\
Let $(D_YX, \overline{\mcL'})$ be the deformation to the projective completion of the cone associated to $(X,Y,\overline{\mcL})$.\\
Then, 
\begin{itemize}
    \item $X$ and $D_YX_{\infty}$ have the same dimension $d$.
    \item $\overline{\mcL}_{|D_YX_\infty}$ is uniformly definite.
    \item $(X, \overline{\mcL}, \mcO_X)$ and $(D_YX_{\infty}, \overline{\mcL'},\mcO_{D_YX_{\infty}} )$ have the same arithmetic Hilbert invariants.
\end{itemize}
\end{prop}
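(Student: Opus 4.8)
The plan is to establish the three assertions in turn, the first two being essentially formal and the third carrying the whole weight of the statement. For the equality of dimensions, I would invoke corollary~\ref{invariancepardeformation}: the Krull dimension of a projective scheme over $\Spec \ZZ$ is governed by the growth rate of its Hilbert function, and corollary~\ref{invariancepardeformation}, applied over the generic point and over each closed point of $\Spec \ZZ$, shows that $(X,\mcL)$ and $(D_YX_\infty,\mcL')$ share this Hilbert function; hence they share the dimension $d$. For the uniform definiteness of the restriction of $\overline{\mcL'}$ to the fibre at infinity, I would use that uniform definiteness of $\overline{\mcL}=(\VV_X(\mcL),T)$ means exactly that $T$ contains a neighbourhood of the zero section $0_X$. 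Since the deformation of analytic tubes is built monotonically from $T$, the tube $T'$ on $\VV_{D_YX}(\mcL')$ contains a neighbourhood of $0_{D_YX}$; intersecting with the compact fibre at infinity, its restriction $T''$ contains a neighbourhood of $0_{D_YX_\infty}$, which is the desired uniform definiteness. Alternatively, one dominates $\overline{\mcL}$ by a pulled-back Fubini--Study norm along an embedding $\pi$ as in proposition~\ref{immerpn} and restricts the deformed Fubini--Study norm at infinity via the embedding $\pi'$.

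For the conservation of the arithmetic Hilbert invariants I would argue by dévissage along the powers of the ideal. Write $R_\bullet=\bigoplus_n H^0(X,\mcL^{\otimes n})$ and let $I_\bullet$ be the homogeneous ideal of $Y$, generated in degree one; by proposition~\ref{associatedamplelinebundle}, the fibre at infinity has total space of sections $\bigoplus_n\bigoplus_{w}I^w_n/I^{w+1}_n$, and $I^w_n/I^{w+1}_n$ is the weight-$w$ eigenspace of the natural transverse $\GG_m$-action. Fix a decreasing family of Hilbertian seminorms $(||\cdot||_i)_i$ defining the bornology of $H^0(\VV_X(\overline{\mcL}),\mcO_\VV)$. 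For each $n$ and each $i$, filtering $R_n$ by $R_n\supseteq I^1_n\supseteq\cdots\supseteq I^n_n\supseteq 0$ and applying the filtered form of lemma~\ref{additivityclassical} gives
$$\widehat{\chi}\big(H^0(X,\mcL^{\otimes n}),||\cdot||_i\big)=\sum_{w=0}^{n}\widehat{\chi}\big(I^w_n/I^{w+1}_n,||\cdot||_{i,\mathrm{sq}}\big),$$
where $||\cdot||_{i,\mathrm{sq}}$ is the subquotient seminorm. Placing on $\bigoplus_w I^w_n/I^{w+1}_n$ the orthogonal direct sum of these subquotient seminorms, the left-hand side equals $\widehat{\chi}$ of that orthogonal sum. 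Thus the invariants $\overline{c}_d,\underline{c}_d$ of $(X,\overline{\mcL},\mcO_X)$ coincide with those of the associated graded module $\bigoplus_w I^w_\bullet/I^{w+1}_\bullet$ equipped with the bornology $\mcB_{\mathrm{sq}}$ defined by the orthogonalised subquotient seminorms; here I use that, by definition~\ref{invariantforgradedmodule}, these invariants depend only on the bornology.

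On the fibre at infinity, corollary~\ref{CombinaisonActionGmU1} asserts that $T''$ is stable under the transverse $U(1)$-action, so the bornology $\mcB_\infty$ of $H^0(\VV_{D_YX_\infty}(\overline{\mcL'_\infty}),\mcO_\VV)$ is defined by $U(1)$-invariant Hilbertian seminorms $(||\cdot||_{i,\infty})_i$, for which the weight spaces $I^w_n/I^{w+1}_n$ are pairwise orthogonal. Additivity of $\widehat{\chi}$ (lemma~\ref{additivityclassical}) then yields
$$\widehat{\chi}\big(H^0(D_YX_\infty,\mcL'^{\otimes n}),||\cdot||_{i,\infty}\big)=\sum_{w=0}^{n}\widehat{\chi}\big(I^w_n/I^{w+1}_n,||\cdot||_{i,\infty}\big),$$
so that the invariants of $(D_YX_\infty,\overline{\mcL'},\mcO_{D_YX_\infty})$ are computed from the same graded module $\bigoplus_w I^w_\bullet/I^{w+1}_\bullet$, now with the bornology $\mcB_\infty$. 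Since the underlying $\ZZ$-modules are literally the same, all torsion contributions match, and everything reduces to the single identity $\mcB_{\mathrm{sq}}=\mcB_\infty$: the analytic tube $T''$ at infinity induces on the total space of sections exactly the bornology cut out by the orthogonalised subquotient seminorms of $T$. Granting this, the two sets of invariants agree and the proof is complete.

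The main obstacle is precisely this last identity $\mcB_{\mathrm{sq}}=\mcB_\infty$. It is not formal: it encodes the compatibility of the metric side of the deformation with its algebraic side, and I expect its proof to require unwinding the explicit $\GG_m$-equivariant construction of the deformation of analytic tubes, checking weight by weight that the holomorphically convex neighbourhood data defining $T''$ is read off from $T$ through the powers of $I_\bullet$. The reducedness and semipositivity hypotheses enter here, through proposition~\ref{reducedcaseBornology} and the good behaviour of holomorphically convex tubes, to guarantee that the subquotient and limit seminorms are comparable up to the subexponential factors $e^{n\epsilon}$ that become invisible after normalisation by $d!/n^d$.
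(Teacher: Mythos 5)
There is a genuine gap, and you have in fact named it yourself. Your treatment of the third bullet reduces everything to the identity $\mcB_{\mathrm{sq}}=\mcB_\infty$ and then proceeds by ``granting this.'' But that identity is not a residual verification: it \emph{is} the proposition. The surrounding steps are correct but purely formal bookkeeping --- filtering $R_n$ by the powers $I^w_n$ and applying lemma \ref{additivityclassical} on one side, and on the other side using the $U(1)$-stability of $T''$ from corollary \ref{CombinaisonActionGmU1} to choose invariant Hilbertian seminorms whose weight spaces are orthogonal --- and their only effect is to place both sides on the same underlying graded $\ZZ$-module $\bigoplus_w I^w_\bullet/I^{w+1}_\bullet$, after which, by definition \ref{invariantforgradedmodule}, the statement is exactly a comparison of the two bornologies. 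That comparison is the analytic heart of the deformation (the ``conservation of number'' phenomenon); it is precisely where the hypotheses you never actually use --- reducedness of $X$, semipositivity, uniform definiteness --- must enter; and it cannot be extracted from anything stated in the present paper, whose own ``proof'' of this proposition is a citation to theorem 6.3 and proposition 3.32 of \cite{NI2022} (the appendix is explicitly an extraction of facts from that reference). So what you have produced is a plausible reduction of the proposition to its hardest ingredient, not a proof of it; whether the reduction matches the actual argument of \cite{NI2022} cannot be checked from the text at hand.

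Two secondary points. For the second bullet, your claim that the deformed tube $T'$ ``is built monotonically from $T$,'' hence contains a neighbourhood of the zero section, is unsupported: the construction of $T'$ is nowhere described in this paper, so neither the monotonicity nor the behaviour of the zero section under the deformation is available to you (this, too, is part of what proposition 3.32 of \cite{NI2022} supplies); the alternative route via Fubini--Study domination has the same status. Finally, your assertion that the underlying $\ZZ$-modules are ``literally the same'' identifies $H^0(D_YX_\infty,\mcL'^{\otimes n})$ with $\bigoplus_w I^w_n/I^{w+1}_n$; by proposition \ref{associatedamplelinebundle} the fibre at infinity is $\Proj\bigoplus_l I^l_\bullet/I^{l+1}_\bullet$, and the sections of $\mcO(n)$ on a $\Proj$ agree with the graded pieces only up to saturation, i.e.\ in large degree (this is what corollary \ref{invariancepardeformation} records at the level of Hilbert functions). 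That discrepancy is harmless for the asymptotic invariants, but it should be said, since your argument as written compares $\widehat{\chi}$ degree by degree on modules that need not coincide in every degree.
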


\begin{proof}
See \cite[theorem 6.3 and proposition 3.32]{NI2022}.\\
\end{proof}




\nocite{*}
\printbibliography

@article{einstein,
%     author =       "Colin J. Bushnell and Guy Henniart",
%     title =        "The Local Langlands Conjecture for GL(2)",
%     journal =      "Annalen der Physik",
%     volume =       "322",
%     number =       "10",
%     pages =        "891--921",
%     year =         "1905",
%     DOI =          "http://dx.doi.org/10.1002/andp.19053221004"
% }

@misc{stacks-project,
    shorthand    = {Stacks},
    author       = {The {Stacks Project Authors}},
    title        = {\textit{Stacks Project}},
    howpublished = {\url{https://stacks.math.columbia.edu}},
    year         = {2018},
  }

@book{Vakil,
    author    = "Ravi Vakil",
    title     = "The Rising Sea: Foundations of Algebraic Geometry",
    year      = "2017",
}

@misc{NI2022,
  doi = {10.48550/ARXIV.2206.07954},
  
  url = {https://arxiv.org/abs/2206.07954},
  
  author = {Ni, Dorian},
  
  keywords = {Algebraic Geometry (math.AG), Number Theory (math.NT), FOS: Mathematics, FOS: Mathematics, 14G40 (Primary), 11G50 (Secondary)},
  
  title = {On the deformation to the normal cone in Arakelov geometry},
  
  publisher = {arXiv},
  
  year = {2022},
  
  copyright = {arXiv.org perpetual, non-exclusive license}
}

@Misc{GunningRossi,
 Author = {Gunning, Robert C. and Rossi, Hugo},
 Title = {Analytic functions of several complex variables},
 Year = {1965},
 Language = {English},
 HowPublished = {Prentice-{Hall} {Series} in {Modern} {Analysis}. {Englewood} {Cliffs}, {N}.{J}.: {Prentice}-{Hall}, {Inc}. xii, 317 pp. (1965).},
 Keywords = {32-01,32Axx},
 zbMATH = {3227305},
 Zbl = {0141.08601}
}

@article{bost,
%      author = {Bost, Jean-Benoît},
%      title = {Th\'eorie de l'intersection et th\'eor\`eme de Riemann-Roch arithm\'etiques},
%      booktitle = {S\'eminaire Bourbaki : volume 1990/91, expos\'es 730-744},
%      series = {Ast\'erisque},
%      publisher = {Soci\'et\'e math\'ematique de France},
%      number = {201-202-203},
%      year = {1991},
%      note = {talk:731},
%      pages = {43-88},
%     %  zbl = {0780.14013},
%     %  mrnumber = {1157838},
%     %  language = {fr},
%     %  url = {http://www.numdam.org/item/SB_1990-1991__33__43_0}
% }

@Article{Abbes1995,
 Author = {Abbes, Ahmed and Bouche, Thierry},
 title = {Théorème de Hilbert-Samuel arithmétique},
 FJournal = {Annales de l'Institut Fourier},
 Journal = {Ann. Inst. Fourier},
 ISSN = {0373-0956},
 Volume = {45},
 Number = {2},
 Pages = {375--401},
 Year = {1995},
 Language = {French},
 DOI = {10.5802/aif.1458},
 Keywords = {14G40,32L10,13D40,11G99},
 zbMATH = {760448},
 Zbl = {0818.14011}
}

@Article{randriam,
 Author = {Randriambololona, Hugues},
 Title = {Métriques de sous-quotient et théorème de Hilbert-Samuel arithmétique pour les faiseaux cohérents},
 FJournal = {Journal f{\"u}r die Reine und Angewandte Mathematik},
 Journal = {J. Reine Angew. Math.},
 ISSN = {0075-4102},
 Volume = {590},
 Pages = {67--88},
 Year = {2006},
 Language = {French},
 DOI = {10.1515/CRELLE.2006.004},
 Keywords = {14G40,14F05},
 zbMATH = {5012037},
 Zbl = {1097.14020}
}

@article{BoucheB1,
     author = {Bouche, Thierry},
     title = {Convergence de la m\'etrique de Fubini-Study d'un fibr\'e lin\'eaire positif},
     journal = {Annales de l'Institut Fourier},
     publisher = {Imprimerie Louis-Jean},
     address = {Gap},
     volume = {40},
     number = {1},
     year = {1990},
     pages = {117-130},
    %  doi = {10.5802/aif.1206},
    %  mrnumber = {91d:32040},
    %  zbl = {0685.32015},
    %  language = {fr},
    %  url = {https://aif.centre-mersenne.org/item/AIF_1990__40_1_117_0}
}

@Article{tian,
 Author = {Tian, Gang},
 Title = {On a set of polarized {K{\"a}hler} metrics on algebraic manifolds},
 FJournal = {Journal of Differential Geometry},
 Journal = {J. Differ. Geom.},
 ISSN = {0022-040X},
 Volume = {32},
 Number = {1},
 Pages = {99--130},
 Year = {1990},
 Language = {English},
 DOI = {10.4310/jdg/1214445039},
 Keywords = {53C55,14C20},
 zbMATH = {4159575},
 Zbl = {0706.53036}
}

@article{grothendieck,
     author = {Grothendieck, Alexander},
     title = {\'El\'ements de g\'eom\'etrie alg\'ebrique : {II.} {\'Etude} globale \'el\'ementaire de quelques classes de morphismes},
     journal = {Publications Math\'ematiques de l'IH\'ES},
     pages = {5--222},
     publisher = {Institut des Hautes \'Etudes Scientifiques},
     volume = {8},
     year = {1961},
     zbl = {0118.36206},
     language = {fr},
     url = {http://www.numdam.org/item/PMIHES_1961__8__5_0/}
}

@article{GilletSoule,
%      author = {Gillet, Henri and Soulé, Christophe},
%      title = {Arithmetic intersection theory},
%      journal = {Publications Math\'ematiques de l'IH\'ES},
%      publisher = {Institut des Hautes \'Etudes Scientifiques},
%      volume = {72},
%      year = {1990},
%      pages = {93-174},
%     %  zbl = {0741.14012},
%     %  mrnumber = {92d:14016},
%     %  language = {en},
%     %  url = {http://www.numdam.org/item/PMIHES_1990__72__93_0}
% }

@Article{zhang,
 Author = {Zhang, Shouwu},
 Title = {Positive line bundles on arithmetic varieties},
 FJournal = {Journal of the American Mathematical Society},
 Journal = {J. Am. Math. Soc.},
 ISSN = {0894-0347},
 Volume = {8},
 Number = {1},
 Pages = {187--221},
 Year = {1995},
 Language = {English},
 DOI = {10.2307/2152886},
 Keywords = {14G40,14F05,11G35},
 zbMATH = {788744},
 Zbl = {0861.14018}
}

@InCollection{bostDwork,
 Author = {Bost, Jean-Beno{\^i}t},
 Title = {Germs of analytic varieties in algebraic varieties: canonical metrics and arithmetic algebraization theorems},
 BookTitle = {Geometric aspects of Dwork theory. Vol. I, II},
 ISBN = {3-11-017478-2},
 Pages = {371--418},
 Year = {2004},
 Publisher = {Berlin: Walter de Gruyter},
 Language = {English},
 Keywords = {14G40},
 zbMATH = {2128073},
 Zbl = {1067.14021}
}

@Book{fulton,
 Author = {Fulton, William},
 Title = {Intersection theory.},
 Edition = {2nd ed.},
 FSeries = {Ergebnisse der Mathematik und ihrer Grenzgebiete. 3. Folge},
 Series = {Ergeb. Math. Grenzgeb., 3. Folge},
 ISSN = {0071-1136},
 Volume = {2},
 ISBN = {3-540-62046-X},
 Year = {1998},
 Publisher = {Berlin: Springer},
 Language = {English},
 Keywords = {14C17,14-02,14C15,14C25},
 zbMATH = {1027930},
 Zbl = {0885.14002}
}

@article{Baum1975RiemannrochFS,
%   title={Riemann-roch for singular varieties},
%   author={P. Baum and W. Fulton and R. Macpherson},
%   journal={Publications Math{\'e}matiques de l'Institut des Hautes {\'E}tudes Scientifiques},
%   year={1975},
%   volume={45},
%   pages={101-145}
% }

@article{Hartshornalgebraicgeo,
%   title={Algebraic Geometry},
%   author={Robin Hartshorne},
%   journal={Grad. Texts in Math. 52, Springer-Verlag, New York- Heidelberg},
%   year={1977}
% }

@Book{eisenbud,
 Author = {Eisenbud, David},
 Title = {Commutative algebra. {With} a view toward algebraic geometry},
 FSeries = {Graduate Texts in Mathematics},
 Series = {Grad. Texts Math.},
 ISSN = {0072-5285},
 Volume = {150},
 ISBN = {3-540-94269-6; 3-540-94268-8},
 Year = {1995},
 Publisher = {Berlin: Springer-Verlag},
 Language = {English},
 Keywords = {13-01,14-01,13-03,13Axx,13Cxx,13A50,13C15},
 zbMATH = {704831},
 Zbl = {0819.13001}
}

@Book{matsumura_1987,
 Author = {Matsumura, Hideyuki},
 Title = {Commutative ring theory. {Transl}. from the {Japanese} by {M}. {Reid}},
 FSeries = {Cambridge Studies in Advanced Mathematics},
 Series = {Camb. Stud. Adv. Math.},
 Volume = {8},
 Year = {1986},
 Publisher = {Cambridge University Press, Cambridge},
 Language = {English},
 Keywords = {13-02,13-01,13Axx,14A05},
 zbMATH = {3973001},
 Zbl = {0603.13001}
}

@Misc{houzel,
 Author = {Houzel, Christian},
 Title = {Espaces analytiques relatifs et th{\'e}or{\`e}me de finitude},
 Year = {1974},
 Language = {French},
 HowPublished = {Fonctions analyt. plusieurs {Variables} {Analyse} compl., {Coll}. internat. {CNRS} {No}. 208, {Paris} 1972, 78-88 (1974).},
 Keywords = {32K99,32C35},
 zbMATH = {3474062},
 Zbl = {0303.32024}
}

@Book{forstneric,
 Author = {Forstneri{\v{c}}, Franc},
 Title = {Stein manifolds and holomorphic mappings. {The} homotopy principle in complex analysis},
 Edition = {2nd edition},
 FSeries = {Ergebnisse der Mathematik und ihrer Grenzgebiete. 3. Folge},
 Series = {Ergeb. Math. Grenzgeb., 3. Folge},
 ISSN = {0071-1136},
 Volume = {56},
 ISBN = {978-3-319-61057-3; 978-3-319-61058-0},
 Year = {2017},
 Publisher = {Cham: Springer},
 Language = {English},
 DOI = {10.1007/978-3-319-61058-0},
 Keywords = {32-02,32Q28,32A10},
 zbMATH = {6737283},
 Zbl = {1382.32001}
}

@Article{berman2007bergman,
 Author = {Berman, Robert J.},
 Title = {Bergman kernels and equilibrium measures for line bundles over projective manifolds},
 FJournal = {American Journal of Mathematics},
 Journal = {Am. J. Math.},
 ISSN = {0002-9327},
 Volume = {131},
 Number = {5},
 Pages = {1485--1524},
 Year = {2009},
 Language = {English},
 DOI = {10.1353/ajm.0.0077},
 Keywords = {32L10,32A36,32U15,58J37},
 URL = {muse.jhu.edu/journals/american_journal_of_mathematics/toc/ajm.131.5.html},
 zbMATH = {5624839},
 Zbl = {1191.32008}
}

@book{Giaquinta,
%     author    = {Mariano Giaquinta, Giuseppe Modica},
%     year = {2007},
%     title     = {Mathematical Analysis,
% Linear and Metric Structures and Continuity},
%     publisher = {Birkhäuser Boston},
% }

@article{hironaka,
%  ISSN = {0003486X},
%  URL = {http://www.jstor.org/stable/1970486},
%  author = {Heisuke Hironaka},
%  journal = {Annals of Mathematics},
%  number = {1},
%  pages = {109--203},
%  publisher = {Annals of Mathematics},
%  title = {Resolution of Singularities of an Algebraic Variety Over a Field of Characteristic Zero: I},
%  volume = {79},
%  year = {1964}
% }

@article{siu,
% author = {Yum-Tong Siu},
% title = {{Pseudoconvexity and the problem of Levi}},
% volume = {84},
% journal = {Bulletin of the American Mathematical Society},
% number = {4},
% publisher = {American Mathematical Society},
% pages = {481 -- 512},
% year = {1978},
% doi = {bams/1183540919},
% URL = {https://doi.org/}
% }

@article{hirschowitz,
% 	Author = {Hirschowitz, André},
% 	Da = {1974/12/01},
% 	Doi = {10.1007/BF01425555},
% 	Id = {Hirschowitz1974},
% 	Isbn = {1432-1297},
% 	Journal = {Inventiones mathematicae},
% 	Number = {4},
% 	Pages = {303--322},
% 	Title = {Pseudoconvexité au-dessus d'espaces plus ou moins homogènes},
% 	Url = {https://doi.org/10.1007/BF01425555},
% 	Volume = {26},
% 	Year = {1974},
% }

@InCollection{chambertloir,
 Author = {Chambert-Loir, Antoine},
 Title = {Arakelov geometry, heights, equidistribution, and the {Bogomolov} conjecture},
 BookTitle = {Arakelov geometry and diophantine applications. Based on lectures given at the summer school, Grenoble, France, June 12--30, 2017},
 ISBN = {978-3-030-57558-8; 978-3-030-57559-5},
 Pages = {299--328},
 Year = {2021},
 Publisher = {Cham: Springer},
 Language = {English},
 DOI = {10.1007/978-3-030-57559-5_8},
 Keywords = {14G40,11G50,14-01},
 zbMATH = {7336438},
 Zbl = {1458.14032}
}

@Article{Gillet1992,
 Author = {Gillet, Henri and Soul{\'e}, Christophe},
 Title = {An arithmetic {Riemann}-{Roch} theorem},
 FJournal = {Inventiones Mathematicae},
 Journal = {Invent. Math.},
 ISSN = {0020-9910},
 Volume = {110},
 Number = {3},
 Pages = {473--543},
 Year = {1992},
 Language = {English},
 DOI = {10.1007/BF01231343},
 Keywords = {14G40,14C40},
 zbMATH = {224066},
 Zbl = {0777.14008}
}

@Article{Maillot2000,
 Author = {Maillot, Vincent},
 Title = {G{\'e}om{\'e}trie d'{Arakelov} des vari{\'e}t{\'e}s toriques et fibr{\'e}s en droites int{\'e}grables.},
 FJournal = {M{\'e}moires de la Soci{\'e}t{\'e} Math{\'e}matique de France. Nouvelle S{\'e}rie},
 Journal = {M{\'e}m. Soc. Math. Fr., Nouv. S{\'e}r.},
 ISSN = {0249-633X},
 Volume = {80},
 Pages = {129},
 Year = {2000},
 Language = {French},
 DOI = {10.24033/msmf.393},
 Keywords = {14G40,14M25,11D75,11G50},
 zbMATH = {1461822},
 Zbl = {0963.14009}
}

@Article{kaveh2008convex,
 Author = {Kaveh, Kiumars and Khovanskii, A. G.},
 Title = {Newton-{Okounkov} bodies, semigroups of integral points, graded algebras and intersection theory},
 FJournal = {Annals of Mathematics. Second Series},
 Journal = {Ann. Math. (2)},
 ISSN = {0003-486X},
 Volume = {176},
 Number = {2},
 Pages = {925--978},
 Year = {2012},
 Language = {English},
 DOI = {10.4007/annals.2012.176.2.5},
 Keywords = {14M17,14M25},
 zbMATH = {6093944},
 Zbl = {1270.14022}
}

@Article{lazarsfeld2008convex,
 Author = {Lazarsfeld, Robert and Musta{\c{t}}{\u{a}}, Mircea},
 Title = {Convex bodies associated to linear series},
 FJournal = {Annales Scientifiques de l'{\'E}cole Normale Sup{\'e}rieure. Quatri{\`e}me S{\'e}rie},
 Journal = {Ann. Sci. {\'E}c. Norm. Sup{\'e}r. (4)},
 ISSN = {0012-9593},
 Volume = {42},
 Number = {5},
 Pages = {783--835},
 Year = {2009},
 Language = {English},
 DOI = {10.24033/asens.2109},
 Keywords = {14C20,14F05,14J99},
 URL = {smf4.emath.fr/en/Publications/AnnalesENS/4_42/html/},
 zbMATH = {5636235},
 Zbl = {1182.14004}
}

@misc{yuan2009volumes,
      title={On Volumes of Arithmetic Line Bundles II}, 
      author={Xinyi Yuan},
      year={2009},
      eprint={0909.3680},
      archivePrefix={arXiv},
      primaryClass={math.AG}
}

@Article{Boucksom_2011,
 Author = {Boucksom, S{\'e}bastien and Chen, Huayi},
 Title = {Okounkov bodies of filtered linear series},
 FJournal = {Compositio Mathematica},
 Journal = {Compos. Math.},
 ISSN = {0010-437X},
 Volume = {147},
 Number = {4},
 Pages = {1205--1229},
 Year = {2011},
 Language = {English},
 DOI = {10.1112/S0010437X11005355},
 Keywords = {14G25,11G50,14G40},
 zbMATH = {5938396},
 Zbl = {1231.14020}
}

@Book{rumelyvauvarley,
 Author = {Rumely, Robert and Lau, Chi Fong and Varley, Robert},
 Title = {Existence of the sectional capacity},
 FSeries = {Memoirs of the American Mathematical Society},
 Series = {Mem. Am. Math. Soc.},
 ISSN = {0065-9266},
 Volume = {690},
 ISBN = {978-0-8218-2058-2; 978-1-4704-0281-5},
 Year = {2000},
 Publisher = {Providence, RI: American Mathematical Society (AMS)},
 Language = {English},
 DOI = {10.1090/memo/0690},
 Keywords = {14G40,31C15,11G35},
 zbMATH = {1474797},
 Zbl = {0987.14018}
}

@Article{grauert,
 Author = {Grauert, Hans},
 Title = {{\"U}ber {Modifikationen} und exzeptionelle analytische {Mengen}},
 FJournal = {Mathematische Annalen},
 Journal = {Math. Ann.},
 ISSN = {0025-5831},
 Volume = {146},
 Pages = {331--368},
 Year = {1962},
 Language = {German},
 DOI = {10.1007/BF01441136},
 zbMATH = {3276651},
 Zbl = {0173.33004}
}

@book{Bost-Charles,
    author    = {Bost, Jean-Beno{\^i}t and Charles, François},
    year = {2022},
    title     = {Infinite dimensional geometry of numbers II: mod-affine A-schemes and positivity in Arakelov geometry},
    publisher = {To be published},
}

@book{Bost-Charles1,
    author    = {Bost, Jean-Beno{\^i}t and Charles, François},
    year = {2022},
    title     = {Infinite dimensional geometry of numbers I: nuclear quasi-coherent sheaves and $\theta$-finiteness},
    publisher = {To be published},
}

@Article{Manivel1993,
 Author = {Manivel, Laurent},
 Title = {A theorem of {{\(L^ 2\)}} extension of holomorphic sections of a {Hermitian} bundle},
 FJournal = {Mathematische Zeitschrift},
 Journal = {Math. Z.},
 ISSN = {0025-5874},
 Volume = {212},
 Number = {1},
 Pages = {107--122},
 Year = {1993},
 Language = {French},
 DOI = {10.1007/BF02571643},
 Keywords = {32Q20},
 zbMATH = {549740},
 Zbl = {0789.32015}
}

\end{document}